\newif\ifmaster
\title
[Monodromy of a system of rank $p^m$]
{The monodromy representation of 
a hypergeometric system in $m$ variables of rank $p^m$}
\author[Kaneko J.]{Kaneko Jyoichi}
\address[Kaneko]{
Department of Mathematical Sciences,
University of the Ryukyus,
Nishihara, Okinawa, 903-0213, Japan
}
\email{kaneko@math.u-ryukyu.ac.jp}
\author[Matsumoto K.]{Matsumoto Keiji}
\address[Matsumoto]{
Department of Mathematics,\\
Faculty of Science,\\
Hokkaido University,\\
Sapporo 060-0810, Japan
}
\email{matsu@math.sci.hokudai.ac.jp}
\author[Ohara K.]{Ohara Katsuyoshi}
\address[Ohara]{
Faculty of Mathematics and Physics\\
Kanazawa University,\\
Kanazawa 920-1192, Japan\\
}
\email{ohara@se.kanazawa-u.ac.jp
}
\author[Terasoma T.]{Terasoma Tomohide}
\address[Terasoma]{
Faculty of Science and Engineering,
Hosei University,
Koganei, Tokyo 184-8584, Japan
}
\email{terasoma@hosei.ac.jp}
\keywords{Hypergeometric functions, Hypergeometric system, 
Monodromy representation}
\subjclass[2020]{33C70, 32S40}
\date{\today}
\newcommand{\C}{\mathbb C}
\newcommand{\D}{\mathbb D}
\newcommand{\N}{\mathbb N}
\newcommand{\Q}{\mathbb Q}
\newcommand{\Z}{\mathbb Z}
\newcommand{\cF}{\mathcal F}
\newcommand{\cH}{\mathcal H}
\newcommand{\pa}{\partial}
\newcommand{\ex}{\mathbf{e}}
\renewcommand{\a}{\alpha} 
\renewcommand{\b}{\beta} 
\newcommand{\g}{\gamma}
\newcommand{\G}{\varGamma}
\newcommand{\e}{\varepsilon}
\newcommand{\f}{\varphi}
\renewcommand{\l}{\lambda}
\newcommand{\z}{\zeta}
\newcommand{\tr}{\;^t}
\newcommand{\Tr}{\mathrm{tr}}
\newcommand{\diag}{\mathrm{diag}}
\newcommand{\one}{\mathbf{1}}
\newcommand{\comment}[1]{{}}
\newcommand{\ds}[1]{\displaystyle{#1}}
\newcommand{\loc}{\operatorname{loc}}
\newcommand{\Ad}{\operatorname{Ad}}
\renewcommand{\Im}{\operatorname{Im}}
\newcommand{\Ker}{\operatorname{Ker}}
\newcommand{\vcc}{{\bf f}_J}
\newcommand{\vccp}{{\bf f}_{J'}}
\newcommand{\vccpp}{{\bf f}_{J''(p)}}
\newcommand{\vccpj}{{\bf f}_{J'(j)}}
\theoremstyle{plain}
\newtheorem{theorem}{Theorem}[section]
\newtheorem{proposition}[theorem]{Proposition}
\newtheorem{lemma}[theorem]{Lemma}
\newtheorem{cor}[theorem]{Corollary}
\newtheorem{fact}[theorem]{Fact}
\theoremstyle{definition}
\newtheorem{remark}[theorem]{Remark}
\newtheorem{definition}[theorem]{Definition}
\begin{document}

\maketitle
\begin{abstract}
We study the monodromy representation of the hypergeometric
system $\cF_{C}^{p,m}(a,B)$ in $m$ variables of rank $p^m$ with parameters 
$a$ and $B$. This system can be regarded as a multi-variable model of 
the generalized hypergeometric equation of rank $p$.  
We construct $m+1$ loops which generate 
the fundamental group of the complement of the singular locus  
of $\cF_{C}^{p,m}(a,B)$, 
and we show that they satisfy certain relations 
as elements of the fundamental group. 
We produce circuit matrices along these loops with respect to 
a fundamental system of solutions to $\cF_C^{p,m}(a,B)$ under certain 
non-integrality conditions on parameters $a$ and $B$.
\end{abstract}
\tableofcontents
\section{Introduction} 
\subsection{Introduction and results of \cite{KMOT}}
Let $p\geq 2, m \geq 1$ be integers, $a=\tr(a_1,\dots,a_p)$ be an element of $\C^p$ and
$$
B=(b_{i,j})_{\substack{{1\le i\le p}\\ {1 \le j\le m}}}
=\begin{pmatrix}
b_{1,1} &  & b_{1,m}\\
\vdots & \cdots & \vdots\\
b_{p-1,1} &  & b_{p-1,m}\\
1 &  & 1
\end{pmatrix}
$$
be a $p\times m$ matrix satisfying the condition
$$
b_{j,k}\notin \Z\ 
(1\le j\le p-1, 1\le k\le m),\quad
b_{p,k}=1\  (1\le k\le m).
$$
A parameter $B$ is said to be standard, if it satisfies the above 
condition. In \cite{KMOT}, we define a hypergeometric series 
$F_C^{p,m}(a,B;x)$ in $m$ variables
 $x_1$, $\dots$, $x_m$ with parameters $a$ and $B$ by

\begin{align}
\label{eq:HGS ser}
 &\quad F_C^{p,m}(a,B;x)
\\ \nonumber
=&
\sum_{(n_1,\dots,n_m)\in \N^m}
\frac{(a_1,n_1+\cdots +n_m)\cdots (a_p,n_1+\cdots +n_m)}
{\prod_{k=1}^{m}\{(b_{1,k},n_k)\cdots (b_{p-1,k},n_k)(b_{p,k},n_k) \}}
x_1^{n_1}\cdots x_m^{n_m}.
\end{align}
There we have shown that the 
hypergeometric series $F_C^{p,m}(a,B;x)$
satisfies a system of differential equations.
\begin{proposition}[System of differential equations, {\cite[Lemma 4.1]{KMOT}}]
\label{lem:HGDE}
For $k=1,\dots,m$ and parameters $a$ and $B$, 
we define differential operators $\ell_k=\ell(a,B)$
by 
\begin{align*}
\ell_k(a,B)= &(b_{1,k}-1+\theta_k)\cdots (b_{p-1,k}-1+\theta_k)
\theta_k
\\
\nonumber
&-x_k(a_1+\theta_1+\cdots+\theta_m)\cdots
(a_p+\theta_1+\cdots+\theta_m),
\end{align*}
where $\theta_k$ denotes the Euler operator $x_k\frac{\pa}{\pa x_k}$.  
Then the series $F_C^{p,m}(a,B;x)$ satisfies the following 
differential equations:
\begin{align*}
\label{differential equations first time}
\ell_k(a,B)
F_C^{p,m}(a,B;x)=0
\end{align*}
for $k=1,\dots,m$.
\end{proposition}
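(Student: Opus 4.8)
The plan is to verify the equations termwise on the defining power series \eqref{eq:HGS ser}. Write $|n|=n_1+\dots+n_m$ for a multi-index $n=(n_1,\dots,n_m)\in\N^m$, and denote by
\[
c(n)=\frac{\prod_{i=1}^{p}(a_i,|n|)}{\prod_{k=1}^{m}\prod_{j=1}^{p}(b_{j,k},n_k)}
\]
the coefficient of $x^n=x_1^{n_1}\cdots x_m^{n_m}$ in $F_C^{p,m}(a,B;x)$, so that $(b_{p,k},n_k)=(1,n_k)=n_k!$. Recording the elementary identities $\theta_j\,x^n=n_j\,x^n$ and $x_k\cdot x^n=x^{n+\ex_k}$, where $\ex_k$ is the $k$-th unit vector, I would apply $\ell_k(a,B)$ to the single term $c(n)x^n$: the first group of factors produces $c(n)\,\bigl(\prod_{j=1}^{p-1}(b_{j,k}-1+n_k)\bigr)\,n_k\,x^n$, and — using $b_{p,k}=1$ to absorb the trailing $n_k=(b_{p,k}-1+n_k)$ — this equals $c(n)\prod_{j=1}^{p}(b_{j,k}-1+n_k)\,x^n$; the second group produces $-\,c(n)\prod_{i=1}^{p}(a_i+|n|)\,x^{n+\ex_k}$.

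Summing over $n$ and extracting the coefficient of a fixed monomial $x^n$ in $\ell_k(a,B)F_C^{p,m}(a,B;x)$, after the shift $n\mapsto n-\ex_k$ in the second sum, I obtain
\[
c(n)\prod_{j=1}^{p}(b_{j,k}-1+n_k)\;-\;c(n-\ex_k)\prod_{i=1}^{p}(a_i+|n|-1),
\]
with the convention that the second term is absent when $n_k=0$; note that in that case the first term vanishes as well, since the factor $b_{p,k}-1+n_k$ is then $0$. It therefore remains to check, for every $n$ with $n_k\ge 1$, the identity
\[
c(n)\prod_{j=1}^{p}(b_{j,k}-1+n_k)=c(n-\ex_k)\prod_{i=1}^{p}(a_i+|n|-1).
\]
This is immediate from the ratio $c(n)/c(n-\ex_k)=\prod_{i=1}^{p}(a_i+|n|-1)\big/\prod_{j=1}^{p}(b_{j,k}+n_k-1)$, which follows from the Pochhammer relations $(a_i,|n|)=(a_i+|n|-1)\,(a_i,|n|-1)$ and $(b_{j,k},n_k)=(b_{j,k}+n_k-1)\,(b_{j,k},n_k-1)$, the factors attached to the variables $x_l$ with $l\neq k$ being unchanged and cancelling in the quotient.

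There is no substantial obstacle here: once the bookkeeping is set up, the verification is a finite manipulation of Pochhammer symbols. The one point requiring care is the passage from a formal identity of power series to the assertion that $F_C^{p,m}(a,B;x)$ genuinely satisfies $\ell_k(a,B)F_C^{p,m}(a,B;x)=0$ as a holomorphic function, which needs the series to converge on a nonempty polydisc; for this I would invoke the convergence of \eqref{eq:HGS ser} established in \cite{KMOT}, the standardness of $B$ ensuring that all the coefficients $c(n)$ are well defined. The role of the normalization $b_{p,k}=1$ — both in merging the bare $\theta_k$ with the remaining factors of the first group and in killing the $n_k=0$ boundary terms — is the one structural feature of the argument worth highlighting.
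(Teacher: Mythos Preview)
Your argument is correct: the termwise verification via the Pochhammer recursions $(a_i,|n|)=(a_i+|n|-1)(a_i,|n|-1)$ and $(b_{j,k},n_k)=(b_{j,k}+n_k-1)(b_{j,k},n_k-1)$, together with the observation that $b_{p,k}=1$ both absorbs the bare $\theta_k$ and kills the $n_k=0$ boundary term, is exactly how such identities are established.

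Note, however, that the present paper does not supply its own proof of this proposition: it is quoted from the companion paper \cite{KMOT} (as Lemma~4.1 there) and used here only as input. So there is no in-paper argument to compare against. Your direct verification is the standard one and is presumably what \cite{KMOT} does as well; in any event it stands on its own, and your remark about invoking convergence from \cite{KMOT} to pass from the formal identity to an analytic one is the right way to close the loop.
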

The system of these differential equations 
is denoted by $\cF_C^{p,m}(a,B)$.
Its rank and singular locus are studied in \cite{MO} and \cite{KMOT}; 
they are given as follows. 

\begin{proposition}[{\cite[Proposition 4.4, Theorems 5.1 and 5.2]{KMOT}}] 
\label{result of part I}
We suppose some non-integrality conditions on parameters $a,B$.  
\begin{enumerate}
 \item 
The rank of the system $\cF_C^{p,m}(a,B)$ is $p^m$.
For $J=(j_1,\dots,j_m )\in \{1,\dots, p\}^m$,
we have linearly independent $p^m$ solutions $\Phi_J(a,B;x)$ 
to $\cF_C^{p,m}(a,B)$ around a point near to the origin  
(see Proposition \ref{fact:fund-solutions} in this paper for their 
explicit forms).
\item
The singular locus of $\cF_C^{p,m}(a,B)$ is 
$$S(x)=\{x\in \C^m\mid x_1\cdots x_m\cdot R(x)=0\},$$
where 
$R(x)=R(x_1,\dots,x_m)$ is an irreducible polynomial in $x_1,\dots,x_m$ 
of degree $p^{m-1}$ given by 
\begin{equation}
\label{eq:R(x)}
R(x_1,\dots,x_m)=\prod_{(i_1,\dots,i_m)\in (\Z/p\Z )^m}
(1-\z_p^{i_1}\sqrt[p]{x_1}-\cdots -\z_p^{i_m}\sqrt[p]{x_m}),
\end{equation}
where $\z_p=\exp(2\pi\sqrt{-1}/p)$ is a primitive $p$-th root  of unity.
\end{enumerate}
\end{proposition}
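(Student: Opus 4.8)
The plan is to establish the three assertions in the order: explicit local solutions at the origin, the holonomic rank $p^m$, and the shape of the singular locus; the rank becomes transparent once the symbol ideal is understood, and the singular locus drops out of the same symbol computation.

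\textbf{Local solutions and the lower bound $\rank\ge p^m$.} At the origin the ``$x_k=0$ part'' of $\ell_k(a,B)$ is $(b_{1,k}-1+\theta_k)\cdots(b_{p-1,k}-1+\theta_k)\,\theta_k$, whose indicial roots are $\rho_{1,k}=0$ and $\rho_{j,k}=1-b_{j-1,k}$ for $2\le j\le p$. For $J=(j_1,\dots,j_m)\in\{1,\dots,p\}^m$ the gauge substitution $u\mapsto\bigl(\prod_k x_k^{\rho_{j_k,k}}\bigr)u$ sends each $\ell_k(a,B)$ to an operator of the same shape with $a,B$ replaced by explicit shifts, producing a solution $\Phi_J(a,B;x)=\bigl(\prod_k x_k^{\rho_{j_k,k}}\bigr)g_J(x)$ with $g_J$ a power series normalized by $g_J(0)=1$, whose convergence near $0$ is read off from the coefficient recurrence. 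Under the standing non-integrality hypotheses the exponents $\rho_{1,k},\dots,\rho_{p,k}$ are pairwise incongruent mod $\Z$ for each $k$, hence the characters $(\rho_{j_1,1},\dots,\rho_{j_m,m})\bmod\Z^m$ are pairwise distinct; grading the local solutions by these characters (equivalently, separating them by the local monodromy around the divisors $x_k=0$) shows any linear relation among the $\Phi_J$ is trivial, so $\rank\cF_C^{p,m}(a,B)\ge p^m$.

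\textbf{The upper bound $\rank\le p^m$.} The holonomic rank equals $\dim_{\C(x)}\bigl(\C(x)\otimes_{\C[x]}D/(\ell_1,\dots,\ell_m)\bigr)$ for $D$ the ring of differential operators, and this is bounded above by $\dim_{\C(x)}\C(x)[\xi_1,\dots,\xi_m]/(\sigma_1,\dots,\sigma_m)$, where $\sigma_k=(x_k\xi_k)^p-x_k\,(x_1\xi_1+\cdots+x_m\xi_m)^p$ is the principal symbol of $\ell_k$. Over $\C(x)$, after dividing by $x_k$, each $\sigma_k$ is homogeneous of degree $p$ in $\xi$; I would check that their only common zero is $\xi=0$ by putting $v=x_1\xi_1+\cdots+x_m\xi_m$: the equations become $x_k^{p-1}\xi_k^p=v^p$, so $v=0$ forces $\xi=0$, while $v\ne0$ forces $x_k\xi_k=\z_p^{j_k}v\sqrt[p]{x_k}$ and, summing over $k$, $\sum_k\z_p^{j_k}\sqrt[p]{x_k}=1$, which is impossible over $\C(x)$. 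Hence $\sigma_1,\dots,\sigma_m$ form a regular sequence of degree-$p$ forms in $m$ variables, the displayed dimension is $p^m$, and with the previous step $\rank=p^m$.

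\textbf{The singular locus.} The system is regular holonomic, so its singular locus is the image in $\C^m$ of the characteristic variety with the zero section removed. Each hyperplane $x_k=0$ belongs to it because of the Euler factor $\theta_k$ in $\ell_k$: the exponents $\rho_{1,k},\dots,\rho_{p,k}$ there are distinct mod $\Z$, giving genuinely branched local solutions. Off these hyperplanes, the computation above shows the symbol variety --- hence any component of the characteristic variety other than the zero section --- meets the fiber over $x$ in a nonzero point precisely when $\sum_k\z_p^{j_k}\sqrt[p]{x_k}=1$ for some branch of the roots and some $(j_1,\dots,j_m)$; letting the branch and $(i_1,\dots,i_m)\in(\Z/p\Z)^m$ vary, this is exactly the vanishing of $R(x)$ in \eqref{eq:R(x)}, the reverse inclusion coming from checking that the characteristic variety genuinely carries the associated component (or from a direct local study of solutions near a generic point of $\{R=0\}$). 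That \eqref{eq:R(x)} is a polynomial of degree $p^{m-1}$ follows by grouping its $p^m$ linear factors one index at a time via $\prod_{i=0}^{p-1}(c-\z_p^i\sqrt[p]{x_k})=c^p-x_k$, and irreducibility of $R$ I would get from a Galois argument: the substitutions $\sqrt[p]{x_k}\mapsto\z_p\sqrt[p]{x_k}$ generate a group permuting the $p^m$ linear factors transitively and fixing $R$, so no proper nonempty subproduct lies in $\C[x_1,\dots,x_m]$. The genuine difficulties are these two ``reverse'' points --- that the characteristic variety, not merely the symbol variety, acquires the extra component over $\{R=0\}$, and the irreducibility of $R$; the rank, by comparison, is just a B\'ezout count for the regular sequence $\sigma_1,\dots,\sigma_m$.
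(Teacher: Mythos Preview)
This proposition is not proved in the present paper: it is quoted from the companion article \cite{KMOT} (their Proposition~4.4 and Theorems~5.1, 5.2), so there is no in-text argument here to compare yours against. The only piece that reappears in this paper is the conjugation identity
\[
\Big(\prod_{k} x_k^{\mu_{J,k}}\Big)^{-1}\ell_k(a,B)\,\Big(\prod_{k} x_k^{\mu_{J,k}}\Big)=\ell_k(a_J,(B)_J)
\]
recorded as Proposition~\ref{fact:fund-solutions}, which is exactly your gauge step producing the $\Phi_J$.

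That said, your outline is sound and follows the standard $D$-module route one would expect \cite{KMOT} to take. The lower bound via $p^m$ series solutions with pairwise incongruent exponent vectors is correct under the non-integrality hypotheses. For the upper bound, your symbol computation is right: $\sigma_k=(x_k\xi_k)^p-x_k(\sum_j x_j\xi_j)^p$, and your check that the common zero locus over $\C(x)$ is $\{\xi=0\}$ is clean, whence $(\sigma_1,\dots,\sigma_m)$ is a regular sequence of degree-$p$ forms and the filtered-to-graded inequality gives $\mathrm{rank}\le p^m$. For the singular locus, the same symbol analysis correctly locates $\{x_1\cdots x_m R(x)=0\}$, and your irreducibility argument for $R$ is valid once you note that $\C[z_1,\dots,z_m]$ (with $z_k^p=x_k$) is a UFD whose $(\Z/p\Z)^m$-invariants are exactly $\C[x_1,\dots,x_m]$, so any factor of $R$ in $\C[x]$ corresponds to a Galois-stable subset of the linear factors, and transitivity of the translation action forces this subset to be empty or full.

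The two places you yourself flag as needing more are genuine: (i) passing from the symbol variety to the characteristic variety over $\{R=0\}$ (one must check the component is not killed when forming $\mathrm{gr}(I)$, e.g.\ by exhibiting logarithmic behavior of solutions at a generic point of $R=0$), and (ii) your casual ``the system is regular holonomic'' is not needed for the singular-locus statement but does require justification if used elsewhere. These are presumably handled in \cite{KMOT}; within the scope of a sketch your treatment is accurate.
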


This paper is a continuation of the paper \cite{KMOT}.
We study the monodromy representation of the system
$\cF_C^{p,m}(a,B)$ using several properties of 
the fundamental group of the complement of the singular  locus $X=\C^m-S(x)$ of
this system.
Let $\e$ be a sufficiently small positive number
and set $\dot x=(\e^p,\dots,\e^p)\in X$.
We give a system of generators 
$\{\rho_0, \rho_1, \dots, \rho_m\}$
of the fundamental group
$\pi_1(X,\dot x)$
and show that they satisfy relations 
\eqref{eq:pi1-relations}.
Moreover, we construct a suitable basis $\{{\vcc}\}_J$,
called vcc (vanishing cycle component) basis, under 
some assumptions on vanishing cycles.
These assumptions will be proved in the forthcoming paper \cite{KMOT3}.
Using this basis, we explicitly give circuit matrices $M_0,M_1,\dots, M_m$ 
along  $\rho_0,\rho_1,\dots,\rho_m$ 
in Theorem \ref{theorem:main theorem}, which is our main theorem. 

Here we recall previous works 
 on monodromy representations of systems of hypergeometric differential 
equations related to this paper. 
As is  mentioned in \cite{KMOT}, the series $F_C^{p,m}(a,B;x)$ 
can be regarded as a multi-variable model of 
the generalized hypergeometric series $_{p}F_{p-1}$ 
just like the Lauricella hypergeometric series of type $C$
is one of multi-variable models of the Gauss hypergeometric series 
$_{2}F_{1}$.  
For the generalized hypergeometric 
differential equation satisfied by $_pF_{p-1}$,
its monodromy representation 
is studied 
in \cite{BH} and \cite{Ma2} (see also their references). 
Our main theorem is shown by induction on the number $m$ of variables, 
and \cite[Theorem 3.7]{Ma2} is used as its proof of the case $m=1$. 
For the Appell hypergeometric system satisfied by 
$F_4(a_1,a_2,b_1,b_2;x_1,x_2)$ 
( $=F_C^{2,2}(a,\begin{pmatrix}b_1 & b_2 \\ 1 & 1\end{pmatrix}
;x_1,x_2)$ in our notation),
its monodromy representation 
is studied in \cite{Kan} and \cite{T}.
In this case, the fundamental group of the complement of its singular locus 
is generated by three loops $\rho_0,\rho_1,\rho_2$ with relations 
$$\rho_1\cdot \rho_2=\rho_2\cdot \rho_1, \quad 
(\rho_0\cdot \rho_k)^2=(\rho_k\cdot \rho_0)^2\ (k=1,2).$$ 
They give explicit forms of the circuit matrices along these loops.
These results are generalized to those for the Lauricella 
hypergeometric  system of type $C$ 
satisfied by the series
$$F_C(a_1,a_2,b_1,\dots,b_m;x_1,\dots,x_m),$$
which is a special case $p=2$ for ours.
The structure of the fundamental group of the complement of its singular locus is studied 
in  \cite{GK}. Its monodromy representation is studied in 
\cite{Ma2} by induction on $m$, and in \cite{G2} by the intersection form 
on the twisted homology groups arising from an Euler type integral
expression.
For a hypergeometric system of rank $9$ in two variables satisfied by
$F\big(\begin{matrix} \tr a \\ \tr B \end{matrix};x_1,x_2\big)
=F_C^{3,2}(a,B;x_1,x_2)$, which is a two variable model of 
the generalized hypergeometric series $_3F_2$, 
the structure of the fundamental group of the complement of its singular locus is 
studied in \cite{KMO2}, and its monodromy representation is studied 
in \cite{KMO1}. 

We can regard these results for circuit matrices of several 
hypergeometric systems as special cases of 
our main theorem. 
In other words, we succeed in uniting the monodromy representations 
of several hypergeometric systems to general forms in our main theorem.

\subsection{Outline of this paper} 
\subsubsection{Generators and some relations of the fundamental group}

We study several relations between elements
in fundamental groups. 
Under the normal covering 
$$
\f:\C^m\ni z=(z_1,\dots,z_m)\mapsto (z_1^p,\dots,z_m^p)=(x_1,\dots,x_m)=x\in 
\C^m,
$$
with the Galois group $(\Z/p\Z )^m$,
the pull back $S(z)=\f^{-1}(S(x))$ of $S(x)$
is equal to
$$
S(z)=\{z_1\dots z_m=0\}\cup \bigcup_{(i_1,\dots,i_m)\in (\Z/p\Z)^m}
\{1-\z_p^{i_1}z_1-\cdots -\z_p^{i_p}z_1=0\}. 
$$
To study the fundamental groups of the spaces 
$X=\C^{m}-S(x)$ and $Z=\C^{m}-S(z)$ inductively on $m$,
we consider the singular loci $S'(x)$ and $S'(z)$ defined as the case of $m-1$.
We set $X'=\C^{m-1}-S'(x)$ and $Z'=\C^{m-1}-S'(z)$.
By considering the hyperplanes 
$\{z\in \C^m\mid z_m=\epsilon\}=\C^{m-1}\subset \C^m$
and $\{x\in \C^m\mid x_m=\epsilon^p\}=\C^{m-1}\subset \C^m$,
we introduce infinitesimal inclusions
\begin{equation*}
\iota_{\infty}:\pi_1(Z',\dot z')\to \pi_1(Z,\dot z),\quad
\iota_{\infty}:\pi_1(X',\dot x')\to \pi_1(X,\dot x).
\end{equation*}
of fundamental groups, where 
$\dot z'=(\e,\dots, \e)\in \C^{m-1}$,
$\dot x'=(\e^p,\dots, \e^p)\in \C^{m-1}$,
$\dot z=(\e,\dots, \e)\in \C^{m}$,
$\dot x=(\e^p,\dots, \e^p)\in \C^{m}$.
Let $\rho_0$ be a loop with base point $\dot x$ in $X$
 turning once positively around the divisor $R(x)=0$, and  
$\rho_k$ $(k=1,\dots,m)$ be that around the hyperplane $x_k=0$ 
near the origin, refer to 
\eqref{eq:loop0} and \eqref{eq:loops} 
for their definitions.  
Let $\rho_0'$ be a loop with base point $\dot x'$ in 
the $(m-1)$-dimensional space $X'$ defined as  the case of $m-1$.
Using the infinitesimal inclusions,
we show the equalities
$$
\iota_{\infty}(\rho_0')=\rho_0\cdot (\rho_m\rho_0\rho_m^{-1})\cdot (\rho_m^2\rho_0\rho_m^{-2})
\cdots(\rho_m^{p-1}\rho_0\rho_m^{1-p}),
$$
and
\begin{equation*}
\iota_{\infty}(\rho_0')\rho_m=\rho_m\iota_{\infty}(\rho_0'),\quad
(\rho_0\rho_k)^p=(\rho_k\rho_0)^p\ (1\le k\le m),
\end{equation*}
as elements in the fundamental group $\pi_1(X,\dot x)$
(Theorem \ref{th:reduction} (1),(2), Corollary 
\ref{circular relation}). 
Here we use the same notations of a loop and its homotopy class.  
We also show that the loops $\rho_0,\rho_1,\dots,\rho_m$ 
generate the fundamental group $\pi_1(X,\dot x)$ 
(Theorem \ref{generator of fund of X}).

\subsubsection{Representation spaces}
We consider an integral expression of the series $F_C^{p,m}(a,B;x)$. 
\begin{proposition}[Integral expression, {\cite[Theorem 3.1]{KMOT}}]
\label{fact:int-rep}
For a parameter $B$ satisfying the condition 
$$
a_p\notin \Z,\quad
a_j,\  
 b_{j,1},\dots,b_{j,m},\ a_j- b_{j,1}-\cdots-b_{j,m}\notin \Z\quad 
(1\le j\le p-1),
$$
we have the following integral expression of
$F_C^{p,m}(a,B;x)$:
\begin{align*}
F_C^{p,m}(a,B;x)
=
\prod_{j=1}^{p-1}
\frac{\G(1-a_j)}
{\G(1+e_j)\prod_{\substack{1\leq k\leq m}} \G(1-b_{j,k})
}
\int_{D}\!\!
\xi
\cdot \dfrac{dt}{t},
\end{align*}
where $e_j=b_{j,1}+\cdots+b_{j,m}-a_j-m, e_p=-a_p$, 
\begin{align*}
&\xi=\Big(\prod_{\substack{1\le j\le p\\1\le k\le m}} 
t_{j,k}^{1-b_{j,k}}\Big)\cdot  
\prod_{j=1}^{p} L_j(t)^{e_j},\quad
\dfrac{dt}{t}=\bigwedge_{j=1}^{p-1} 
\Big(\dfrac{dt_{j,1}}{t_{j,1}}\wedge \cdots \wedge 
\dfrac{dt_{j,m}}{t_{j,m}}\Big),
\\
&L_j(t,x)=1-t_{j,1}-\cdots-t_{j,m},\quad
\\
&L_p(t,x)=
1-\frac{x_1}{t_{1,1}\cdots t_{p-1,1}}-\cdots 
-\frac{x_m}{t_{1,m}\cdots t_{p-1,m}},
\\
\end{align*}
and
$D$ is the product of 
regularized twisted cycles 
defined in \cite[\S4]{G1}.
\end{proposition}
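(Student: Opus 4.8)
The plan is to prove the identity directly from the series \eqref{eq:HGS ser} by term-by-term integration: expand the integrand of $\int_D\xi\,\frac{dt}{t}$ as a power series in $x_1,\dots,x_m$, integrate each monomial, and check that the result collapses to $F_C^{p,m}(a,B;x)$ times the asserted $\G$-factor. \emph{Step 1 (binomial expansion and separation of variables).} Put $y_k=x_k/(t_{1,k}\cdots t_{p-1,k})$, so $L_p(t,x)=1-y_1-\cdots-y_m$, and by the multinomial theorem
\[
L_p(t,x)^{e_p}=(1-y_1-\cdots-y_m)^{-a_p}=\sum_{(n_1,\dots,n_m)\in\N^m}\frac{(a_p,n_1+\cdots+n_m)}{n_1!\cdots n_m!}\,x_1^{n_1}\cdots x_m^{n_m}\prod_{j=1}^{p-1}\prod_{k=1}^{m}t_{j,k}^{-n_k}.
\]
Since $b_{p,k}=1$, the $j=p$ part of $\prod_{j,k}t_{j,k}^{1-b_{j,k}}$ is $1$, so upon inserting this expansion into $\xi\cdot\frac{dt}{t}$ the $t$-variables separate over $j=1,\dots,p-1$: for a fixed multi-index $n$ the integrand over the product cycle $D=D_1\times\cdots\times D_{p-1}$ is $\prod_{j=1}^{p-1}\big(\prod_{k=1}^m t_{j,k}^{-b_{j,k}-n_k}\big)L_j(t)^{e_j}\,dt_{j,1}\cdots dt_{j,m}$. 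Interchanging $\sum$ and $\int$ (justified below), the claim reduces to computing, for each $j$, one Dirichlet-type integral over the regularized twisted cycle $D_j$ attached to the arrangement $\{t_{j,1}\cdots t_{j,m}\,L_j(t)=0\}$.

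\emph{Step 2 (the Beta integral and the $\G$-factors).} By the regularized-twisted-cycle form of the Euler--Dirichlet integral (\cite[\S4]{G1}),
\[
\int_{D_j}\Big(\prod_{k=1}^m t_{j,k}^{-b_{j,k}-n_k}\Big)L_j(t)^{e_j}\,dt_{j,1}\cdots dt_{j,m}=\frac{\G(1+e_j)\,\prod_{k=1}^m\G(1-b_{j,k}-n_k)}{\G\!\big(1+e_j+\sum_{k=1}^m(1-b_{j,k}-n_k)\big)};
\]
since $e_j=b_{j,1}+\cdots+b_{j,m}-a_j-m$, the denominator simplifies to $\G(1-a_j-n_1-\cdots-n_m)$. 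I would then turn these $\G$-quotients into Pochhammer symbols by the reflection formula $\G(z)\G(1-z)=\pi/\sin\pi z$: since $n_k$ and $n_1+\cdots+n_m$ are integers the extraneous signs cancel, giving $\G(1-a_j)/\G(1-a_j-n_1-\cdots-n_m)=(a_j,n_1+\cdots+n_m)$ and $\G(1-b_{j,k})/\G(1-b_{j,k}-n_k)=(b_{j,k},n_k)$. Hence the $j$-th integral equals $\dfrac{\G(1+e_j)\prod_k\G(1-b_{j,k})}{\G(1-a_j)}\cdot\dfrac{(a_j,n_1+\cdots+n_m)}{\prod_k(b_{j,k},n_k)}$.

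\emph{Step 3 (resummation) and the main obstacle.} Multiplying over $j=1,\dots,p-1$, pulling the $n$-independent factor $\prod_{j=1}^{p-1}\G(1+e_j)\prod_k\G(1-b_{j,k})/\G(1-a_j)$ out of the sum, and using $n_k!=(b_{p,k},n_k)$ together with the fact that $(a_p,n_1+\cdots+n_m)$ provides the $j=p$ factor, the remaining series is exactly \eqref{eq:HGS ser}; rearranging the constant gives the stated formula. The main obstacle is the analytic justification of the term-by-term integration: the functions $y_k=x_k/(t_{1,k}\cdots t_{p-1,k})$ are unbounded as the $t_{j,k}$ approach the coordinate hyperplanes, so the binomial series does not converge uniformly over a naive simplex and the bare Dirichlet integrals diverge in the relevant parameter ranges. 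This is precisely why $D$ must be taken as a product of \emph{regularized} twisted cycles in the sense of \cite{G1}: on such cycles the integrals are defined by meromorphic continuation in the parameters, the non-integrality hypotheses on $a_p$, the $a_j$, the $b_{j,k}$, and $a_j-b_{j,1}-\cdots-b_{j,m}$ guarantee that the regularized cycles are nonzero and that the Dirichlet evaluation holds, and a dominated-convergence argument on a small polydisc $\{|x_k|<\delta\}$, together with careful tracking of the branch normalizations of $t_{j,k}^{1-b_{j,k}}$ and $L_j(t)^{e_j}$ along $D$, legitimizes the interchange of sum and integral. Making this regularization precise is the technical heart of the argument.
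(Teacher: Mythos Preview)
The present paper does not give a proof of this proposition; it is quoted verbatim as \cite[Theorem~3.1]{KMOT} and used only as input. There is therefore no ``paper's own proof'' to compare against here.

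That said, your argument is the standard one and is almost certainly what \cite{KMOT} does, since it is exactly the template Goto uses in \cite{G1} for Lauricella's $F_C$ (the case $p=2$ of the present series): expand $L_p^{-a_p}$ by the multinomial theorem, observe that the $(p-1)$ blocks of $t$-variables decouple, evaluate each block by the regularized Dirichlet integral of \cite[\S4]{G1}, and resum. Your sign bookkeeping in Step~2 is correct (the $(-1)^{n_1+\cdots+n_m}$ from $\G(1-a_j)/\G(1-a_j-N)$ cancels against $\prod_k(-1)^{n_k}$ from the $\G(1-b_{j,k})/\G(1-b_{j,k}-n_k)$), and your identification of the analytic obstacle---that the bare simplex integral diverges because $y_k=x_k/(t_{1,k}\cdots t_{p-1,k})$ blows up near the coordinate hyperplanes, forcing the use of regularized twisted cycles---is exactly the point. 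The non-integrality hypotheses on $a_p$, $a_j$, $b_{j,k}$, and $a_j-\sum_k b_{j,k}$ are precisely what make the regularized cycles $D_j$ well defined and the Dirichlet evaluation valid. Your sketch is correct.
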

For an element $x$ in a small neighborhood of $\dot x$,
we can define twisted cycles $D_J$ indexed by $J\in \{1,\dots,p\}^m$
in the forthcoming paper \cite{KMOT3}.
The integral of $\xi\cdot dt/t$ over $D_J$ is denoted
by $\Phi^{\Gamma}_J(a,B;x)$.
The twisted homology 
group is identified with the $K$-linear span $\mathcal V$ of
$\{\Phi_J^\G(a,B;x)\}_J$,
where $K$ is the monodromy field defined in \eqref{def of monodromy field}. 
By the construction of twisted cycles, the space $\mathcal V$
is stable under circuit transformations.
In this sense, $K$ is the smallest field so that circuit transformations 
are well defined.


Let $M_0, M_1,\dots, M_m \in GL(K,p^m)$ be
the circuit matrices of the loops $\rho_0, \rho_1, \dots, \rho_m$ with 
respect to the basis $\{\Phi_J^\G(a,B;x)\}_J$.
Then we have
the simultaneous eigen decomposition 
\begin{equation}
\label{simaltaneous decomp int} 
\mathcal V=\bigoplus_J\mathcal V_J,\quad \mathcal V_J=K\Phi_J^\G(a,B;x),
\end{equation}
for the $m$ circuit transformations 
along $\rho_1, \dots, \rho_m$,
and the eigen decomposition $\mathcal V=\bigoplus_j\mathcal V_j$
with respect to the circuit transformation along $\rho_m$.

Let $\iota_{\infty}:\pi_1(X')\to \pi_1(X)$ be the 
infinitesimal inclusion.
Let $N_0$ be the circuit matrix of $\iota_{\infty}(\rho_0')$
on the space $\mathcal V$.
The space $\mathcal V_j$ is stable under the action of $N_0$.
The restriction of the action $N_0$ to $\mathcal V_j$
is denoted by $N_{0,j}$. 
The twisted homology group corresponding to
the system 
$\cF_C^{p,m-1}(a-(b_{j,m}-1)\one_p,B')$
is denoted by $\mathcal V'_j$, and its monodromy field is denoted by
$K'$.
The circuit matrix of $\rho_0'$
on the space $\mathcal V'_j$
is denoted by $M'_{0,j}$.
Then we have an
isomorphism 
\begin{equation}
\label{twisted hom and infinitesimal inclusion}
\mathcal V'_j\otimes_{K'} K \simeq \mathcal V_j,
\end{equation}
and via this isomorphism, the circuit matrix $N_{0,j}$
on $\mathcal V_j$ is equal to the circuit matrix $M'_{0,j}$ on
$\mathcal V'_j\otimes_{K'} K$.

\subsubsection{The circuit matrix $M_0$ as  an $H$-reflection}
The field $K$ is equipped with an involution $k\mapsto k^{\vee}$ 
defined by the sign change of the parameters $a,B$.
Similarly to $\mathcal V$, 
the $K$-linear span of
$\{\Phi_J^\G(-a,-B;x)\}_J$
is identified with 
the twisted homology 
group associated with $F_C^{p,m}(-a,-B;x)$, which is
denoted by $\mathcal V^\vee$.
We give a $K$-conjugate isomorphism 
$$
\mathcal V \ni \Phi_J^\G(a,B;x) \mapsto  \Phi_J^\G(-a,-B;x)\in \mathcal V^{\vee},
$$
and introduce an intersection form $\langle u,v\rangle$
for $u,v\in \mathcal V\simeq K^{p^m}$ under this isomorphism. 
It is $K$-skew symmetric, i.e.
$$\langle ku,v\rangle=k\langle u,v\rangle=\langle u,k^{\vee}v\rangle$$
for any $k\in K$, and  
satisfies the invariant property under the action of 
the circuit matrix  $M_{\rho}$:
$$
\langle uM_{\rho},vM_{\rho}\rangle=
\langle u,v\rangle.
$$
We define an $H$-reflection with respect a non-degenerate
$K$-skew symmetric matrix $H$ 
in \S \ref{subsec:H reflection}.
We easily show that an $H$-reflection $M$ is determined by
the $K$-skew symmetric form 
$\langle u,v\rangle=u H\tr v^\vee$, an element ${\bf v}$ of 
the one-dimensional space $\Im(M-I_n)$, and an inverse image ${\bf v}^*$ of ${\bf v}$ under $M-I_n$.
The vector ${\bf v}$ is called a reflection vector.
We show in the forthcoming paper \cite{KMOT3} that 
$M_0$ is an $H$-reflection with respect to the intersection form. 

We introduce a basis consisting of eigen components of
a reflection vector ${\bf v}$ of $M_0$.
Let ${\vcc}$ be the $J$-th component of ${\bf v}$ with respect to the 
simultaneous decomposition ${\bf v}=\sum_J{\vcc}$ associated with
\eqref{simaltaneous decomp int}.
We show in \cite{KMOT3} that 
$\{{\vcc}\}_J$ forms a basis of $\mathcal V$.

By restricting the intersection matrix
on $\mathcal V$ to $\mathcal V_j$ for each fixed $j\in \{1,\dots,p\}$, 
we have a $K$-skew intersection
form on the space $\mathcal V_j$. 
We show that $N_{0,j}$ is an $H$-reflection 
with respect to this intersection form, and
$$
{\bf v}_j=\sum_{J=(j_1,\dots, j_{m-1}, j)}{\vcc}
$$
is a reflection vector for $N_{0,j}$.
By this fact, we show that the intersection form on
$\mathcal V'_j$ is equal to that on $\mathcal V_j$  via the isomorphism
\eqref{twisted hom and infinitesimal inclusion}.

Using this coincidence,
we can determine the diagonal entries of 
the matrix $\mathcal H$ (up to constant) and the eigenvalue $\l$ of 
the circuit matrix $M_0$ by induction  on the number $m$ of the 
variables $x_1,\dots, x_m$.  
Hence the circuit matrix $M_0$ is expressed by   
an $H$-reflection in our main theorem.
As for a similar argument, see \cite[\S4]{Ma2}.

\begin{remark}
\begin{enumerate}
 \item 
 As for relations among $\rho_0,\rho_1,\dots,\rho_m\in \pi_1(X,\dot x)$,   
we focus on 
those in Theorem \ref{th:reduction}
and in Corollary \ref{circular relation}. 
They satisfy some more relations coming from singular points 
on the unirational hypersurface $S_R(x)=\{x\in \C^m\mid R(x)=0\}$,  
which is non singular only if $m=1$ or $(p,m)=(2,2)$.
\item
We study twisted homology groups and the intersection form
for the hypergeometric 
system $\cF_C^{p,m}(a,B)$ in \cite{KMOT3}.   
We give twisted cycles 
corresponding to the solutions $\Phi_J^\G(a,B;x)$
and the reflection vector of $M_0$.
\end{enumerate}
\end{remark}

\subsection{Notations}
The set $\{1,\dots, p\}(\subset \Z)$ is denoted by $[p]$. We set
$\bold e(z)=\exp(2\pi \sqrt{-1}z)$ and
$\z_p=\bold e(1/p)=\exp(2\pi\sqrt{-1}/p)$, which is a
primitive $p$-th root of unity.
Let $\Bbbk $ be a field. In this paper, $\Bbbk ^n$ and $\tr \Bbbk ^n$ denote the 
spaces of row and column vectors 
over $\Bbbk $. For a finite set $S$, $\Bbbk ^S$ denotes 
the set of row vectors ${\bf v}=(v_i)_{i\in S}$ indexed by $S$.
If $\# S=n$, then it is identified with $\Bbbk ^n$.
We similarly define a square matrix $M=(m_{i,j})_{i,j \in S}$
indexed by $S$.
For an element $j\in S$, the $j$-th element of the product ${\bf v}M$
is given as $\sum_{i\in S}v_im_{ij}$.
For a matrix $M$, $r(M)$ denotes 
the linear endomorphism on the space of row vectors obtained by
the right multiplication of $M$.
For a vector $(a_1,\dots, a_n)$, $\diag(a_1,\dots,a_n)$ denotes
the diagonal matrix whose diagonal elements are $a_1,\dots, a_n$.
For a finite set of subspaces $V_i$ in a vector space $V$,
$\sum_i V_i$ denotes the subspace consisting of linear 
combinations of elements in $V_i$,
and $\bigoplus_i V_i$ denotes the vector space consisting of
$(v_i)_i$ ($v_i \in V_i$).

\section{Fundamental group of the complement of the singular locus}

In this section, we study the fundamental group of the complement 
\begin{equation}
\label{eq:reg-locus}
X=\C^m-S(x)=\{x=(x_1,\dots,x_m)\in \C^m\mid x_1\dots x_mR(x)\ne 0\}
\end{equation} 
of the singular locus $S(x)$ of $\cF_C^{p,m}(a,B)$, where 
the set $S(x)$ and the polynomial $R(x)$ are given 
in Proposition \ref{result of part I}.
 
A path $\g$ in a topological space means a continuous map $t\mapsto \g(t)$ 
from a closed interval $[c_0,c_1]$ to this space, 
and $\g(c_0)$ and $\g(c_1)$ are called the start and end points of $\g$, 
respectively. 
The inverse path of $\gamma$ is $t\mapsto \g(c_0+c_1-t)$, which is  
denoted by $\gamma^{-1}$.
A path $\g$ satisfying  $\g(c_0)=\g(c_1)$ is called a loop, and 
the point $\g(c_0)(=\g(c_1))$ is called its base point. 
For two paths $\gamma$ and $\delta$ satisfying 
the end point of $\gamma$ is equal to the start point of $\delta$, 
$\gamma\cdot\delta$ denotes the path joining the start point of $\delta$ 
to the end point of $\gamma$.
If $\gamma$ and $\delta$ are loops with a common base point, 
then $\gamma\cdot \delta$  and $\delta\cdot \gamma$ are also loops.
For a loop  $\gamma$, its homotopy class in the fundamental group
is also denoted by $\gamma$.

\subsection{Covering of the complement of the singular locus $S(x)$}
\label{sec:cover}
We define a map 
\begin{equation}
\label{eq:cover-phi}
\varphi:\C^m_z\ni (z_1, \dots, z_m)\mapsto 
(x_1, \dots, x_m)=(z_1^p, \dots, z_m^p)\in \C^m_x. 
\end{equation}
Here $\C_z^m\simeq \C^m$ and $\C_x^m\simeq \C^m$
and their coordinates are given by $z=(z_1,\dots,z_m)$ and 
$x=(x_1,\dots,x_m)$.
The map $\f$ is of degree $p^m$ and ramified along 
$\{z\in \C_z^m\mid z_1\cdots z_m=0\}$. 
The restriction of this map to 
$(\C_z^\times)^m=\{z\in \C_z^m\mid z_1\cdots z_m\ne 0\}$
is also denoted by $\f$, and it is an unramified covering of 
$(\C_x^\times)^m=\{x\in \C_x^m\mid x_1\cdots x_m\ne 0\}$. 
Let $\sigma_k$ $(k=1,\dots,m)$ be automorphisms of $(\C_z^\times)^m$ given by 
\begin{equation*}
\sigma_k:(\C_z^\times)^m\ni (z_1,\dots,z_k,\dots,z_m)\mapsto 
(z_1,\dots,\z_pz_k,\dots,z_m)\in (\C_z^\times)^m.
\end{equation*}
Then any two of them commute each other, and  
each $\sigma_k$ of them is of order $p$ and satisfies $\f\circ \sigma_k=\f$. 
Thus the covering $\f:(\C_z^\times)^m\to (\C_x^\times)^m$ is normal, and 
$\sigma_1,\dots,\sigma_m$ generate its covering transformation group 
which is isomorphic to the additive group $(\Z/p\Z )^m$. 
By this isomorphism, 
the product of $\sigma_1^{i_1},\cdots, \sigma_m^{i_m}$
for $I=(i_1,\dots,i_m)\in (\Z/p\Z )^m$
is denoted by
\begin{equation}
\label{product of sigma}
\sigma_I=\sigma_1^{i_1}\cdots \sigma_m^{i_m}.  
\end{equation}

We consider the inverse image $Z=\f^{-1}(X)$ of $X$ in \eqref{eq:reg-locus} 
under the covering map $\f$.
We set 
$$S(z)=\f^{-1}(S(x)),\quad S_R(z)=\f^{-1}(S_R(x)),$$
which are the inverse image of the singular locus $S(x)$ of 
$\cF_C^{p,m}(a,B)$ and that of the hypersurface 
$$S_R(x)=\{x\in \C_x^m\mid R(x)=0\}.$$
It is easy to see that 
$$\f^{-1}(V(x_k))=V(z_k)$$
for $1\le k\le m$, where 
$$V(x_k)=\{x\in \C_x^m\mid x_k=0\},\quad V(z_k)=\{z\in \C_z^m\mid z_k=0\}$$
are coordinate hyperplanes.
Let 
$R(z)=R(z_1,\dots,z_m)$ be the pull back $\f^*(R(x))$ of 
the irreducible polynomial $R(x)$ in \eqref{eq:R(x)} under $\f$. 
Then 
it is decomposed into the product of $p^m$ linear forms 
\begin{equation*}
R(z_1,\dots,z_m)=\prod_{(i_1,\dots,i_m)\in (\Z/p\Z )^m}
(1-\z_p^{i_1}z_1-\cdots -\z_p^{i_m}z_m).
\end{equation*}
By using these linear forms, we define $p^m$ hyperplanes 
\begin{equation}
\label{eq:hyperplane}
H_I=\{(z_1, \dots, z_m)\in \C_z^m\mid 
\zeta^{i_1}_pz_1+\cdots +\zeta^{i_m}_pz_m=1\}
\end{equation} 
in $\C_z^m$ for $I=(i_1, \dots, i_m)\in (\Z/p\Z )^m$. 
Then we have  
$$
S_R(z)=\bigcup_{I\in (\Z/p\Z )^m}H_I,\quad 
S(z)=S_R(z)\cup \bigcup_{k=1}^m V(z_k).
$$
Since 
$$Z=\{z\in \C_z^m\mid \f^*(x_1\cdots x_mR(x))\ne 0\}=\{z\in \C_z^m\mid z_1\cdots z_mR(z)\ne 0\},$$ 
we have 
\begin{equation*}
Z=(\C_z^\times)^m-S_R(z)=
\C_z^m-\bigcup_{k=1}^m V(z_k)-
\bigcup_{I\in (\Z/p\Z )^m}H_I,
\end{equation*}
which is the complement of $m+p^m$ hyperplanes in $\C_z^m$. 


We choose a base point $\dot{z}=(\e,\dots,\e)\in Z$, 
where $\e$ is a sufficiently small positive number, 
and we set a base point $\dot{x}\in X$ by the image 
$\varphi(\dot{z})=(\e^p,\dots,\e^p)$ 
of $\dot z$ under $\f$. 
For a loop $\g$ with start point $\dot z$ in $Z$, 
the image $\f_*(\g)=\f \circ \g$ of $\g$ under $\f$ is a loop with start point 
$\dot x$ in $X$. Thus the map $\varphi$ induces a homomorphism
$$
\varphi_*:\pi_1(Z,\dot{z}) \to \pi_1(X,\dot{x}).
$$

\subsection{Definition of $\rho_0, \rho_1,\dots,\rho_m$ in the fundamental group}
\label{def of loop}
In this subsection, we define elements $\rho_0, \rho_1, \dots, \rho_m$
in $\pi_1(X,\dot{x})$.

\begin{definition}[Definition of $\rho_1, \dots, \rho_m$]
\label{def of rho}
We define a path $\widetilde{\rho_k}$ ($1\le k\le m$) connecting 
$\dot{z}$ 
and $\sigma_k\dot{z}=(\e,\dots,\overset{\text{$k$-th}}{\zeta_p}\e,\dots,
\e)$
given by 
\begin{equation*}
\widetilde{\rho_k}:[0,1]\ni t \mapsto 
(\e,\dots,\e,\overset{\text{$k$-th}}{\bold e(t/p)\e},\e,\dots,
\e)\in Z.
\end{equation*}
The image $\f_*(\widetilde{\rho_k})$ of $\widetilde{\rho_k}$ 
under the map $\f$ becomes a loop expressed by 
\begin{equation}
\label{eq:loops}
\rho_k:[0,1]\ni t \mapsto 
(\e^p,\dots,\e^p,\overset{\text{$k$-th}}{\bold e(t)\e^p},\e^p,\dots,
\e^p)\in X,
\end{equation}
and it defines an element of the fundamental group $\pi_1(X,\dot{x})$.

\end{definition}
Now we define $\widetilde{\rho_0} \in \pi_1(Z,\dot{z})$.
Let $L$ be the complex line passing through 
$\dot z$ and $(1/m,\dots, 1/m)\in H_I$ for $I=(0,\dots,0)$.
We choose a complex coordinate $s$ of $L$ so that 
$\dot{z}$ and $(1/m,\dots, 1/m)$ correspond to
$s=1$ and $s=0$, respectively.
Let $\delta$ be a small positive number and
$\dot{\beta}$ be the point in $L$ corresponding to 
$s=\delta$.

\begin{definition}[Definition of $\rho_0$]
\label{def of rho0}
We define a loop $\widetilde{\rho_0}$ in $L-S(z)$ 
by the composite of 
\begin{enumerate}
\item
the segment
from $\dot{z}$ to $\dot{\beta}$, 
\item
the small circle defined by
$$
[0,1]\ni u\mapsto s=\delta\bold e(u)\in L-S(z),
$$
and 
\item
the segment from $\dot{\beta}$ to $\dot{z}$.
\end{enumerate}

We define $\rho_0$ by 
 the image of $\widetilde{\rho_0}$ under the map $\f$, i.e., 
\begin{equation}
\label{eq:loop0}
\rho_0=\f_*(\widetilde{\rho_0})=\f\circ \widetilde{\rho_0},
\end{equation} 
which is a loop with start point $\dot x=(\e^p,\dots,\e^p)$ in $X$
turning once around the hypersurface $S_R(x)$ positively.
\end{definition}

\subsection{Infinitesimal inclusions and inductive structures}
\label{subsection:inftes incl}
We define in \S \ref{sec:cover} the affine space
$\C^m_x$ (resp. $\C^m_z$) and their closed subsets $S_R(x)$ and $S(x)$
(resp. $S_R(z)$ and $S(z)$). 
Recall that $V(z_m)$ is the affine subspace of $\C_z^m$ defined by
$$
V(z_m)=\{z=(z_1,\dots, z_{m-1}, z_{m})\in \C_z^m\mid z_m=0\},
$$
which is identified with
$$
\C_{z'}^{m-1}
=\{z'=(z_1,\dots, z_{m-1})\mid z_1,\dots, z_{m-1}\in \C\}.
$$ 
Under this identification, 
the restriction of the hyperplane arrangement $S_R(z)$ to $V(z_m)$ 
gives the hyperplane arrangement
$S_R(z')$ of $\C_{z'}^{m-1}$.
More precisely, we have
$$
S_R(z')=\bigcup_{I'\in (\Z/p\Z )^{m-1}} H'_{I'},
$$  
where
$$
H'_{I'}=\{(z_1, \dots,  z_{m-1})\subset \C_{z'}^{m-1}
\mid \zeta^{i_1}z_1+\cdots +\zeta^{i_{m-1}}z_{m-1}=1\}
$$
for $I'=(i_1,\dots, i_{m-1})\in (\Z/p\Z )^{m-1}$. 
We set 
$$
S(z')=S_R(z')\cup \{(z_1,\dots, z_{m-1})\mid z_1\cdots z_{m-1}=0\}.
$$
This construction is compatible with the action
of the covering transformation group of $\f:\C_z^m \to \C_x^m$.

The space $V(z_m)$ is stable under the action of $(\Z/p\Z )^{m}$
and its restriction induces an action of $(\Z/p\Z )^{m-1}$ on $V(z_m)$.
Its quotient is a subset of $\C_x^m$ defined by
\begin{equation}
\label{def of vxm}
V(x_m)=\{x=(x_1,\dots, x_{m-1}, x_{m})\in \C_x^m\mid x_m=0\},
\end{equation} 
which is identified with the space 
$$
\C_{x'}^{m-1}=\{x'=(x_1,\dots, x_{m-1})\mid x_1,\dots, x_{m-1}\in \C\},
$$
and the quotient map $\C_{z'}^{m-1}\to \C_{x'}^{m-1}$ is denoted by 
$\f'$.
The natural inclusions $S_R(z')\to \C_{z'}^{m-1}$ and $S(z')\to \C_{z'}^{m-1}$ 
also induce 
inclusions $S_R(x')\to \C_{x'}^{m-1}$ and $S_R(x')\to \C_{x'}^{m-1}$, 
where $S_R(x')$ and $S(x')$ are the quotients of $S_R(z')$ and $S(z')$
under the action of $(\Z/p\Z )^{m-1}$, respectively.
We set
\begin{equation}
\label{one dim lower}
X'=\C_{x'}^{m-1}-S(x'),\quad Z'=\C_{z'}^{m-1}-S(z').
\end{equation}
This inductive structure is used to study the restriction 
of the space of solutions to the system 
$\cF_C^{p,m}(a,B)$ around the neighborhood of $x_m=0$
in the following section. 

In this subsection, 
we study the fundamental groups of the complements $X$ and
$Z$ of the closed subsets $S(x)$ and $S(z)$ using this inductive structure.
We prepare several notations.
Let $U_Z$ be a tubular neighborhood
of $V(z_m)\subset \C^m_z$.
Let $I'$ be an element of $(\Z/p\Z )^{m-1}$
and $U_{I'}$ be a tubular neighborhood of $H'_{I'}$ in $\C_{z'}^{m-1}$,
and set 
$$U_R(z')=\bigcup_{I'\in (\Z/p\Z )^{m-1}}U_{I'}.$$
Then $U_R(z')$ is a tubular neighborhood
of $S_R(z')\subset \C^{m-1}_{z'}$.
We assume that 
$Z'-U_R(z')$
is a deformation retract of $Z'$.
We also assume that $U_Z$ and $U_R(z')$ are stable under the actions of
$(\Z/p\Z )^m$ and $(\Z/p\Z )^{m-1}$.
We set
$$
\varphi(U_Z)=U_X,\quad \varphi'(U_R(z'))=U_R(x').
$$
We use the covering transformation 
$$
\sigma_m:Z\ni (z_1,\dots,z_m)\mapsto (z_1, \dots, \zeta_p z_m)\in Z.
$$
Let
$$
\pi_m:U_Z \to V(z_m),\quad \pi_m^\circ:U_Z^{\circ} \to V(z_m)
$$
be the restriction 
of the projection $\C_{z}^{m}\to\C_{z'}^{m-1}$
to $U_Z$ and that to $U_Z^\circ=U_Z-V(z_m)$.
Then they are a disc bundle and a punctured disc bundle.
Moreover we also assume that $S_R(z)\cap U_Z \subset \pi^{-1}_m(U_R(z'))$.
\begin{figure}[h]

\includegraphics[width=8cm]{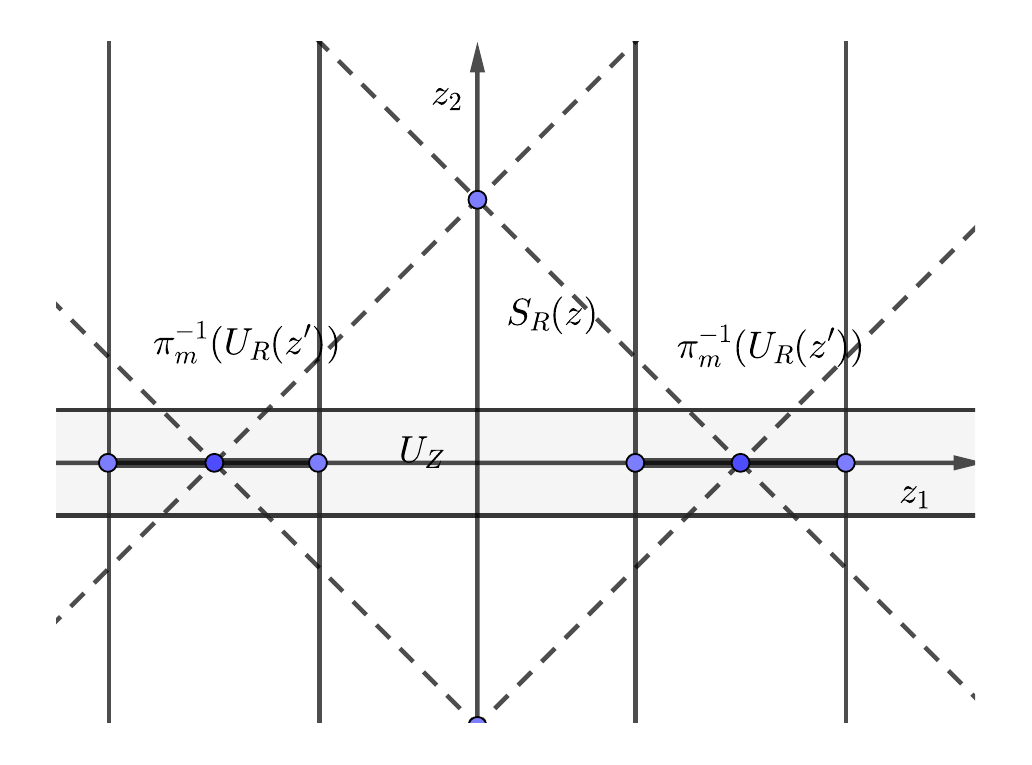}

\caption{Tubular neighborhood}
\end{figure}
Then the restriction 
$$
(\pi_m^{\circ})^{-1}(Z'-U_R(z'))
\to Z'-U_R(z') 
$$
of 
$\pi_m^\circ$ to $(\pi_m^{\circ})^{-1}(Z'-U_R(z'))$
is a punctured disc bundle, and we have 
${\pi_m^{\circ}}^{-1}(Z'-U_R(z'))\subset Z$.
We define a set $\C_{z',\e}^{m-1}$ and a morphism $\widetilde{r}$ by
\begin{align*}
&\C_{z',\e}^{m-1}=\{z=(z_1,\dots, z_m)\in\C_z^{m}\mid z_m=\e\},
\\
&\widetilde{r}:\C_{z',\e}^{m-1}\ni(z_1, \dots, z_{m-1},\e)
\mapsto (z_1, 
\dots, z_{m-1})\in \C_{z'}^{m-1}.
\end{align*}
Then the fundamental group of the subset
$$
Z'_{\e}=\C_{z',\e}^{m-1}-{\pi_m^{\circ}}^{-1}(U_R(z'))
$$
is isomorphic to
that of $Z'-U_R(z')$ via the map $\widetilde{r}$.
We set $\dot{z}'=\pi_m^{\circ}(\dot{z})$.
Thus we have the following homomorphisms of fundamental groups:
$$
\xymatrix{
 & \pi_1(Z'_{\e},\dot{z}) \ar[ld]_{\simeq}\ar[rd]\ar[d]&
\\
\pi_1(Z'-U_R(z'),\dot{z}')\ar[d]_{\simeq}
&\pi_1({\pi_m^\circ}^{-1}(Z'-U_R(z')),\dot{z})\ar[l]\ar[r] &
\pi_1(Z,\dot{z})
\\
 \pi_1(Z',\dot{z}').
}
$$
As a consequence, we have a homomorphism:
\begin{equation}
\label{inif incl Z}
\iota_{\infty}:\pi_1(Z',\dot{z}') \to \pi_1(Z,\dot{z}),
\end{equation}
which is called the infinitesimal embedding.
This construction is compatible with the actions of $(\Z/p\Z )^m$ 
and $(\Z/p\Z )^{m-1}$. The image of $Z'_{\e}$ under
the quotient map is denoted by  
$X'_{\e}$.
Then we have the infinitesimal inclusion
from $X'_{\e}$ to $X$ and a homomorphism:
\begin{equation}
\label{inif incl X}
\iota_{\infty}:
\pi_1(X',\dot{x}') \to \pi_1(X,\dot{x}). 
\end{equation}

Let $\widetilde{\rho_0'}$ and $\rho_0'$ be the elements of 
$\pi_1(Z',\dot{z}')$ and $\pi_1(X',\dot{x}')$ constructed in 
Definition \ref{def of rho0} 
as the case of $m-1$.
An element $\gamma$ in $\pi_1(Z',\dot{z}')$ can be represented by
a loop in $Z'-U_R(z')$ expressed as
$$
[0,1]\ni t \mapsto \gamma(t)=(z_1(t),\dots, z_{m-1}(t)) \in Z'.
$$
For this expression, the element $\iota_{\infty}(\gamma)$ in
$\pi_1(Z,\dot{z})$
is expressed as
$$
[0,1]\ni t \mapsto \zeta(t)=(z_1(t),\dots, z_{m-1}(t),\e) \in Z
$$
for a sufficiently small $\e$.
\begin{theorem}
\label{th:reduction}
\begin{enumerate}
 \item 
In the fundamental group $\pi_1(X,\dot{x})$, we have an identity
\begin{equation}
\label{conj prod in X}
\iota_{\infty}(\rho_0')=\rho_0\cdot (\rho_{m}\cdot \rho_0\cdot \rho_m^{-1})\cdot
(\rho_m^2\cdot \rho_0\cdot\rho_m^{-2})
\cdots 
(\rho_m^{p-1}\cdot \rho_0\cdot\rho_m^{1-p}).
\end{equation}
\item
The element $\iota_{\infty}(\rho_0')$ commutes with $\rho_m$.
\end{enumerate}
\end{theorem}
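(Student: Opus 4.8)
The plan is to work entirely inside the covering space $Z$ and its Galois group $(\Z/p\Z)^m$, pushing the resulting relations down to $X$ via $\f_*$ at the very end. First I would unwind the construction of $\rho_0$: it is the image under $\f$ of the loop $\widetilde{\rho_0}$ that runs out along the complex line $L$ through $\dot z$ and the point $c_0=(1/m,\dots,1/m)\in H_{(0,\dots,0)}$, circles $H_{(0,\dots,0)}$ once near $c_0$, and comes back. The key geometric observation is that $c_0$ lies in a single hyperplane $H_0:=H_{(0,\dots,0)}$ of the arrangement $S_R(z)$ and in no coordinate hyperplane $V(z_k)$, so a sufficiently small circle around $s=\delta$ in $L$ is a meridian of $H_0$ alone. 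Then I would note that the covering transformation $\sigma_m$ sends the hyperplane $H_{(i_1,\dots,i_{m-1},i_m)}$ to $H_{(i_1,\dots,i_{m-1},i_m+1)}$ (because $\sigma_m$ multiplies $z_m$ by $\zeta_p$, which absorbs into the coefficient $\zeta_p^{i_m}$), so the $p$ hyperplanes $H_{(0,\dots,0,j)}$, $j\in\Z/p\Z$, are exactly the $\sigma_m$-orbit of $H_0$; these are precisely the hyperplanes lying over the same irreducible branch structure, and their union is the restriction-relevant part of $S_R(z)$ near the fibre over the $(m-1)$-dimensional point $\dot z'$.

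The heart of the matter is the first identity. On the $(m-1)$-dimensional slice $\C^{m-1}_{z'}=V(z_m)$, the loop $\widetilde{\rho_0'}$ is a meridian of the hyperplane $H'_{(0,\dots,0)}$, which is the image of the single hyperplane $H_0$ under restriction. Its infinitesimal inclusion $\iota_\infty(\widetilde{\rho_0'})$ is the ``same'' loop pushed to the nearby slice $z_m=\e$; in $Z$ this slice meets $S_R(z)$ along the traces of all $p$ hyperplanes $H_{(0,\dots,0,j)}$, $j=0,\dots,p-1$, since restricting to $z_m=\e$ does not collapse the $z_m$-dependence. Thus the push-forward of a meridian of $H'_{(0,\dots,0)}$ becomes, in $\pi_1$ of the slice $z_m=\e$, a product of meridians of the $p$ hyperplanes $H_{(0,\dots,0,j)}$. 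I would make this precise by a local model: near the point $c_0$ the $p$ hyperplanes $H_{(0,\dots,0,j)}$ intersect the plane spanned by $L$ and the $z_m$-direction in $p$ lines through a common point, and a small loop in the $z_m=\e$ slice that is the image of the slice-meridian encircles all $p$ of these lines once; decomposing it into consecutive small meridians and using that $\f_*$ identifies the meridian of $H_{(0,\dots,0,j)}$ with $\rho_m^{\,j}\rho_0\rho_m^{-j}$ (since $\sigma_m^{\,j}$ carries $H_0$ to $H_{(0,\dots,0,j)}$, and $\rho_m$ is by definition $\f_*$ of the path realizing $\sigma_m$) yields exactly the right-hand side of \eqref{conj prod in X}. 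The ordering of the factors is dictated by the order in which the encircling loop meets the $p$ branches as the argument of $z_m$ increases from $0$; I would fix a clean picture (e.g. the branches ordered by $\arg$) so that the conjugating powers are $1,\rho_m,\dots,\rho_m^{p-1}$ in that order.

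For part (2), that $\iota_\infty(\rho_0')$ commutes with $\rho_m$: geometrically, $\rho_m=\f_*(\widetilde{\rho_m})$ is realized by rotating $z_m$ from $\e$ to $\zeta_p\e$, i.e.\ it is (the $\f$-image of) a loop living in the $z_m$-direction over the fixed base point $\dot z'$, while $\iota_\infty(\rho_0')$ is supported in a slice $z_m=\e$ with $z'$ varying in a small loop near $\dot z'$. These two families of loops are ``transverse'' and can be realized on disjoint pieces of a product neighborhood $\C^{m-1}_{z'}\times(\text{punctured }z_m\text{-disc})$: in such a product, a loop of the first factor and a loop of the second factor commute. Concretely, conjugating $\iota_\infty(\rho_0')$ by $\rho_m$ amounts to conjugating by $\sigma_m$ on the covering, which permutes the $p$ factors $\rho_m^{\,j}\rho_0\rho_m^{-j}$ cyclically ($j\mapsto j+1$); the cyclic product on the right of \eqref{conj prod in X} is invariant under this cyclic shift up to the overall conjugation by $\rho_m$, and one checks $\rho_m\cdot\iota_\infty(\rho_0')\cdot\rho_m^{-1}=\iota_\infty(\rho_0')$ directly from \eqref{conj prod in X} by telescoping: $\rho_m(\rho_0)(\rho_m\rho_0\rho_m^{-1})\cdots(\rho_m^{p-1}\rho_0\rho_m^{1-p})\rho_m^{-1}=(\rho_m\rho_0\rho_m^{-1})\cdots(\rho_m^{p}\rho_0\rho_m^{-p})$, and $\rho_m^p$ is the meridian of $V(z_m)$ pushed down, which is trivial in the relevant local picture \emph{on the slice} — more carefully, one uses that $\sigma_m^p=\mathrm{id}$ so $\rho_m^{\,p}\rho_0\rho_m^{-p}=\rho_0$, closing the telescope.

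The main obstacle I expect is making the ``$p$ branches through one point'' local picture rigorous enough to pin down the exact word on the right-hand side of \eqref{conj prod in X}, including the precise ordering and the precise conjugating elements — that is, identifying the meridian of $H_{(0,\dots,0,j)}$ (as seen from the base point $\dot z$ via a concrete path) with $\rho_m^{\,j}\rho_0\rho_m^{-j}$ rather than with some other conjugate. This is the usual bookkeeping of braid-type relations in hyperplane-arrangement complements; I would handle it by choosing an explicit two-dimensional transversal slice through $c_0$ (spanned by the $L$-direction and the $z_m$-direction), drawing the $p$ real lines it cuts out of $\bigcup_j H_{(0,\dots,0,j)}$, and reading off the generators and their product from a single well-chosen picture, exactly as in the $F_4$ and type-$C$ cases cited in the introduction.
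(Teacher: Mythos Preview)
Your plan for part (1) is essentially the paper's approach: reduce to a two-dimensional transversal slice through the point $c_0=(1/m,\dots,1/m)$, observe that the slice $z_m=\e$ meets the $p$ hyperplanes $H_{(0,\dots,0,j)}$ in $p$ points arranged on a small circle, and read off the product decomposition of the big encircling loop $\iota_\infty(\widetilde{\rho_0'})$ as an ordered product of meridians. The paper carries this out with explicit auxiliary paths $\rho_0^\dag,\rho_{m,k}^\dag,A_k,\gamma_k$ and two lemmas (one showing the relevant $2$-simplex meets no hyperplane in its interior, one giving an explicit homotopy in a local box $B_{\loc}$), but the skeleton is the same.

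Your argument for part (2), however, contains a genuine error. The telescoping step asserts $\rho_m^{\,p}\rho_0\rho_m^{-p}=\rho_0$ on the grounds that $\sigma_m^p=\mathrm{id}$. This inference is invalid: the identity $\sigma_m^p=\mathrm{id}$ only says that $\rho_m^{\,p}$ lifts to a \emph{closed} loop $\widehat{\rho_m}$ in $Z$; it says nothing about $\rho_m^{\,p}$ commuting with $\rho_0$ in $\pi_1(X,\dot x)$. In fact this commutation fails generically. One sees this already at the level of the monodromy representation: $M_m^{\,p}$ is diagonal with entries $\b_{j_m,m}^{-p}$, and for it to commute with the reflection $M_0$ it would have to preserve the line $K{\bf v}$; since ${\bf v}$ has nonzero component in every $\mathcal V_J$ this forces all $\b_{j,m}^{-p}$ to coincide, which contradicts the non-integrality condition \eqref{eq:non-integral-addition}. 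So $\rho_m^{\,p}\rho_0\rho_m^{-p}\neq\rho_0$ and the telescope does not close. The relation actually available is the weaker $(\rho_0\rho_m)^p=(\rho_m\rho_0)^p$, which is a \emph{consequence} of part (2), not an input to it.

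The paper proves (2) by repeating the local one-dimensional computation: in the punctured line $L'_\e\cap(B_{\loc}-S(z)_{\loc})$ both the product $\gamma_0\gamma_1\cdots\gamma_{p-1}$ and the cyclically shifted product $\gamma_1\gamma_2\cdots\gamma_{p}$ are homotopic to the same big circle ${\rho_0'}^\dag$ encircling all $p$ punctures, and this equality pushes forward to the desired commutation. Your first, geometric remark (that $\iota_\infty(\rho_0')$ and $\rho_m$ live in complementary factors of a product) can also be made to work, but only after one uses the deformation retraction $Z'\simeq Z'-U_R(z')$ to realize $\iota_\infty(\widetilde{\rho_0'})$ inside the genuine punctured-disc-bundle region $(\pi_m^\circ)^{-1}(Z'-U_R(z'))$; without that step the slice $z_m=\e$ is not part of a product near $S_R(z)$, and the argument as you wrote it is incomplete.
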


\subsection{Proof of Theorem \ref{th:reduction}}
We prepare several lemmas.
\begin{lemma}
\label{triangle no intersection}
Let $\Delta_2$ be a $2$-dimensional 
simplex with the vertices
$$
p_0=(\e,\dots, \e),\quad 
p_1=(\frac{1}{m},\dots, \frac{1}{m}),\quad p_2= (\frac{1-\e}{m-1},
\dots, \frac{1-\e}{m-1},\e)
$$
in $\C_z^m$.
If a point $p$ is contained in $\Delta_2\cap H_I$,
then 
$p$ is on the edge with the vertices
$p_1,p_2$ and $I=(0,\dots, 0)$.
In particular, the interior $\Delta_2^\circ$ 
of $\Delta_2$ does not meet $S_R(z)$.
\end{lemma}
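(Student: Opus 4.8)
The plan is to parametrize a general point $p$ of the simplex $\Delta_2$ by barycentric coordinates and then analyze, for each $I\in(\Z/p\Z)^m$, the equation $\z_p^{i_1}z_1+\cdots+\z_p^{i_m}z_m=1$ defining $H_I$. Write $p=t_0 p_0+t_1 p_1+t_2 p_2$ with $t_0,t_1,t_2\ge 0$ and $t_0+t_1+t_2=1$. Reading off the coordinates of $p_0,p_1,p_2$, the first $m-1$ coordinates of $p$ are all equal to the same value, call it $w=t_0\e+\tfrac{t_1}{m}+\tfrac{t_2(1-\e)}{m-1}$, while the $m$-th coordinate is $w'=t_0\e+\tfrac{t_1}{m}+t_2\e$. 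Note $w,w'$ are real and, because $\e$ is a small positive number and $m\ge 2$, one checks the elementary bounds $0<w\le \tfrac1m$ and $0<w'\le\tfrac1m$ (the maxima being attained only at $t_1=1$, i.e.\ at $p_1$), with $w=w'$ exactly on the edge $p_1p_2$ (where $t_0=0$), and $w>w'$ elsewhere off that edge. These are the only order relations I will need.

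Next I substitute into the defining equation of $H_I$. Since the first $m-1$ coordinates coincide, the left-hand side becomes $w\bigl(\z_p^{i_1}+\cdots+\z_p^{i_{m-1}}\bigr)+w'\z_p^{i_m}$, and I want to show this cannot equal $1$ unless $I=(0,\dots,0)$ and $p$ lies on the edge $p_1p_2$. The key step is the triangle-inequality estimate
\begin{align*}
\bigl|w(\z_p^{i_1}+\cdots+\z_p^{i_{m-1}})+w'\z_p^{i_m}\bigr|
&\le w(m-1)+w' \le w(m-1)+w = wm \le 1,
\end{align*}
using $|\z_p^{i_k}|=1$, then $w'\le w$, then the bound $w\le\tfrac1m$. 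So equality $=1$ forces equality throughout: first $w'=w$, hence $t_0=0$ and $p$ is on the segment $p_1p_2$; then $wm=1$, i.e.\ $w=\tfrac1m$, which together with $t_0=0$ forces $t_1=1$, i.e.\ $p=p_1$ — but more importantly, to have the first inequality (the triangle inequality) be an equality, all the unit vectors $\z_p^{i_1},\dots,\z_p^{i_m}$ must be equal and positive, i.e.\ all $\z_p^{i_k}=1$, so $I=(0,\dots,0)$. Conversely, for $I=(0,\dots,0)$ the equation reads $(m-1)w+w'=1$, which does have solutions on the edge $p_1p_2$ (indeed $p_1$ and $p_2$ both satisfy it), so the locus $\Delta_2\cap H_{(0,\dots,0)}$ is exactly the edge $p_1p_2$, and for all other $I$ the intersection is empty. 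Since the interior $\Delta_2^\circ$ has $t_0,t_1,t_2>0$, in particular $t_0>0$ so $w>w'$, the strict inequality in the chain above shows $\Delta_2^\circ$ meets no $H_I$, hence does not meet $S_R(z)=\bigcup_I H_I$.

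I expect the only mildly delicate point to be the verification of the inequalities $0<w\le\tfrac1m$ and $w'\le w$ with the stated equality cases; these are convexity statements (each of $w,w'$ is an affine function of $(t_0,t_1,t_2)$ on the simplex, so its extrema are at vertices, and $\e$ small ensures $\e<\tfrac1m$ and $\tfrac{1-\e}{m-1}<\tfrac1m$, giving $w,w'\le\tfrac1m$ with equality only at $p_1$). The rest is a clean application of the equality case of the triangle inequality for complex numbers of modulus one. One small subtlety to record: the argument shows $\Delta_2\cap S_R(z)$ is contained in the edge $p_1p_2$, and the lemma as stated only claims this containment plus the statement that $I$ must be $(0,\dots,0)$; the converse inclusion (that the whole edge, or part of it, actually lies on $H_{(0,\dots,0)}$) is not needed and I will not belabor it.
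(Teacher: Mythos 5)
Your overall strategy (barycentric coordinates, triangle inequality, analysis of the equality case) is the same as the paper's, but the inequality chain at the heart of your argument is incorrect. You split the estimate as $w(m-1)+w'\le w(m-1)+w=wm\le 1$, claiming $0<w\le \tfrac1m$ with maximum only at $p_1$, and later that $\tfrac{1-\e}{m-1}<\tfrac1m$. For small $\e$ the opposite holds: $\tfrac{1-\e}{m-1}>\tfrac1m$ (this is equivalent to $m\e<1$), so at and near the vertex $p_2$ one has $w>\tfrac1m$ and $wm>1$; the bound $wm\le 1$ simply fails there. The vertex $p_2$ itself exposes the problem: it lies in $\Delta_2\cap H_{(0,\dots,0)}$ and satisfies $(m-1)w+w'=1$ with $w'<w$ strictly, which is incompatible with your chain. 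Relatedly, your identification of the equality locus of $w=w'$ is wrong: $w-w'=t_2\bigl(\tfrac{1-\e}{m-1}-\e\bigr)$, so $w=w'$ exactly when $t_2=0$, i.e.\ on the edge $p_0p_1$, not on $p_1p_2$; your equality analysis then concludes $p=p_1$ only, contradicting your own (correct) remark that the whole edge $p_1p_2$ lies on $H_{(0,\dots,0)}$.

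The repair is to avoid the intermediate bound $w\le\tfrac1m$ altogether and bound the sum of the coordinates directly: $(m-1)w+w'=t_0\,m\e+t_1+t_2=1-t_0(1-m\e)\le 1$, with equality if and only if $t_0=0$, i.e.\ if and only if $p$ lies on the edge $p_1p_2$. Combined with the triangle inequality $1=\bigl|w(\z_p^{i_1}+\cdots+\z_p^{i_{m-1}})+w'\z_p^{i_m}\bigr|\le (m-1)w+w'$ (using $w,w'>0$), equality throughout forces $t_0=0$ and all $\z_p^{i_k}=1$, hence $I=(0,\dots,0)$. This is precisely the paper's argument, where the point is parametrized as $(r,\dots,r,rs)$ with $0\le s\le 1$, $rs\ge\e$ and $r(m-1+s)\le 1$, and the two-sided estimate $r(m-1+s)\le 1\le r(m-1+s)$ is used.
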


\begin{figure}[h]
\includegraphics[width=7cm]{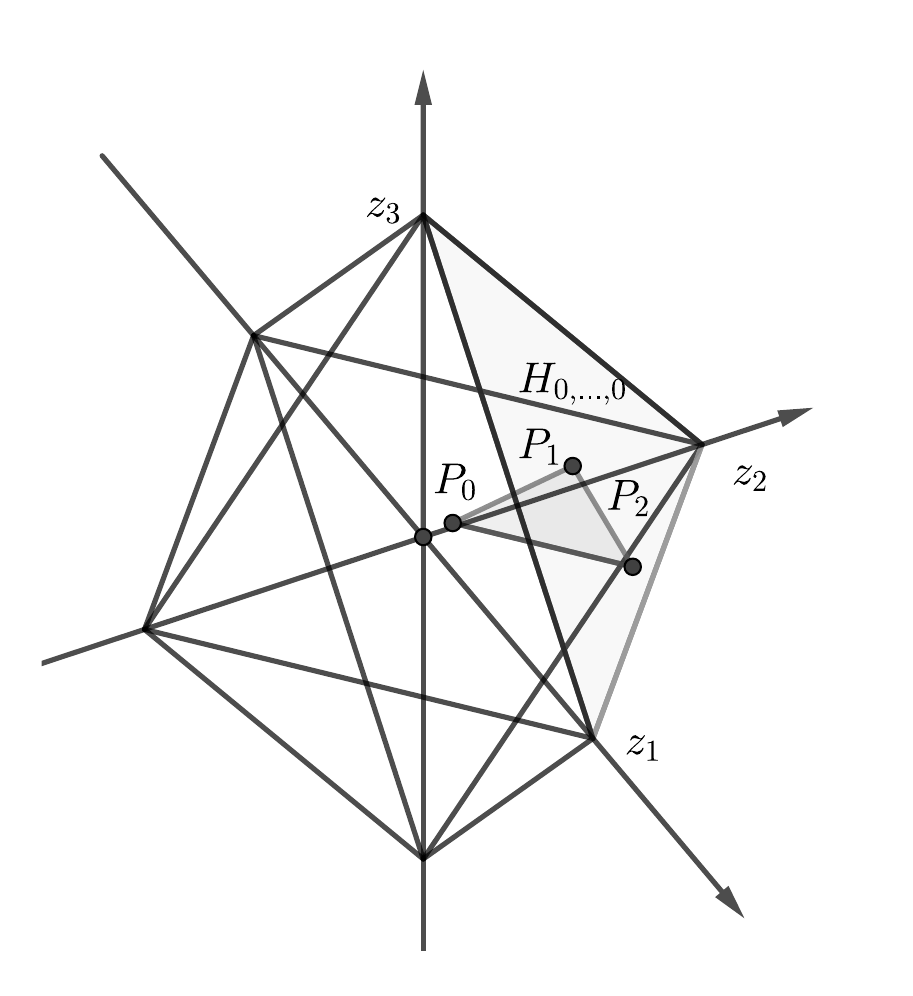}

\caption{Changing paths up to homotopy}
\end{figure}

\begin{proof}
Let $p$ be a point in $\Delta_2\cap H_I$
for $I=(i_1, \dots, i_m)$. 
Then it can be written as 
$$
p=(r, \dots,r, rs),\quad (0\leq r(m-1+s)\leq 1, 0\leq s \leq 1,\e \leq rs).
$$
Since $p$ is in $H_I$, we have
$$
\zeta^{i_1}r+\cdots+\zeta^{i_{m-1}}r+\zeta^{i_{m}}sr=1, \quad (r>0).
$$
By triangle inequality, we have
\begin{align*}
r(m-1+s)\leq &1=\mid\zeta^{i_1}r+\cdots+\zeta^{i_{m-1}}r+\zeta^{i_{m}}sr\mid  
\\
\leq &
\mid r\mid +\cdots+\mid r\mid+\mid sr \mid =r(m-1+s).
\end{align*}
Therefore the above inequalities  
should be equalities.
Since $r>0, rs>0$, we have $i_1=\cdots=i_m=0$.
\end{proof}
To prove Theorem \ref{th:reduction}, we define paths 
$\rho_0^*$ and $\rho_{m,k}^*$ in $Z$.
Let $L'$ be a complex line passing through $(0,\dots, 0)$ and 
$(1/(m-1),\dots, 1/(m-1))$
in $\C_{z'}^{m-1}$, and $L'_{\e}$ be a line in $\C_{z',\e}^{m-1}$
given by $\{(z',\e)\mid z'\in L'\}$.
For an element $(z_1, \dots, z_{m-1},z_m)\in {\pi_m}^{-1}(L')$, 
we introduce 
coordinates 
$(s,z_m)$ 
so that 
\begin{align*}
&(z_1, \dots, z_{m-1},z_m)=(\dfrac{1-s}{m-1},\dots, \dfrac{1-s}{m-1},z_m).
\end{align*}
Then $L'_\e$ is defined by $z_m=\e$ and
the point
$\dot{z}=(\e,\dots, \e,\e) \in L'_\e$ is given by 
$(s,z_m)=(1-(m-1)\e,\e)$.
Under these coordinates, the action of $\sigma_m$ is given by
$$
\sigma_m(s,z_m)=(s,\zeta_p z_m).
$$
The intersections
$L'_{\e}\cap H_{0,\dots, 0}$ and
$\sigma_m^k(L'_{\e}\cap H_{0,\dots, 0})$
are expressed by $(s,z_m)=(\e,\e)$ and 
$(s,z_m)=(\e,\bold e(k/p)\e)$, respectively.
Therefore we have
$$
\sigma_m^k(L'_{\e}\cap H_{0,\dots, 0})\in H_{0,\dots,0,-k},
$$
and the point $L'_{\e}\cap H_{0,\dots,0,-k}$ 
is given by $(s,z_m)=(\bold e(-k/p)\e,\e)$.
Let $\delta$ be a positive real number
sufficiently smaller than $\e$, and
the point $(s,z_m)=(\e+\delta,\e)\in L'_\e$ 
be denoted by $b_L$.
Then we have
$$
\sigma_m^k(b_L)=(\e+\delta,\bold e(k/p)\e)\in \sigma_m^k(L'_{\e}).
$$

\begin{definition}
\begin{enumerate}
\item
We define paths and a loop as follows:
\begin{align*}
&\text{the segment }\phi\text{ from $\dot{z}$ to $b_L$ in $L'_{\e}$},
\\
&\text{the loop }\rho_0^{\dag}:
[0,1]\ni t\mapsto (\e+\delta\bold e(t),\e)\in L'_{\e},
\\
&\text{the path }
\rho_{m,k}^{\dag}:
[0,1]\ni t\mapsto (\e+\delta,\bold e(kt/p)\e)\in Z,
\\
&\text{the path }
\widetilde{\rho_{m,k}}:
[0,1]\ni t\mapsto (1-(m-1)\e,\bold e(kt/p)\e)\in Z, 
\end{align*}
where $k=0,1,\dots,p-1$. 
\item
We define a 
loop ${\rho_0'}^{\dag}$ in
$L'_{\e}$ by
$$
[0,1]\ni t\mapsto (s,z_m)=((\e+\delta)\bold e(t),\e).
$$
Then we have
$$
\iota_{\infty}(\widetilde{\rho_0'})=\phi\cdot\rho_0^{'\dag}\cdot\phi^{-1}.
$$
\item
We set $\rho_0^{*}=\phi\cdot\rho_0^{\dag}\cdot\phi^{-1}$,
$\rho_{m,k}^{*}=\phi\cdot\rho_{m,k}^{\dag}\cdot\sigma_m^k(\phi)^{-1}$.
\end{enumerate} 
\end{definition}

\begin{figure}[h]

\includegraphics[width=10cm]{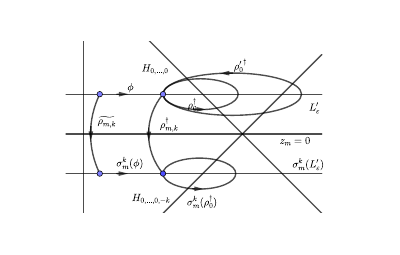}

\caption{Local changing of paths}
\end{figure}

We have the following lemma.
\begin{lemma}
\label{homo eq tubler}
The loop $\rho_0^*$ is homotopy equivalent to $\widetilde{\rho_0}$ in $Z$.
The path $\rho_{m,k}^*$ is homotopy equivalent to 
$\widetilde{\rho_{m,k}}$ in $Z$.
\end{lemma}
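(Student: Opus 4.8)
The statement to be proved is Lemma~\ref{homo eq tubler}: that $\rho_0^*$ is homotopy equivalent to $\widetilde{\rho_0}$ in $Z$, and that $\rho_{m,k}^*$ is homotopy equivalent to $\widetilde{\rho_{m,k}}$ in $Z$. The plan is to build explicit homotopies inside $Z$ that move the straight-line defining data of $\widetilde{\rho_0}$ (resp.\ $\widetilde{\rho_{m,k}}$) to the $L'_\e$-based data of $\rho_0^*$ (resp.\ $\rho_{m,k}^*$), using Lemma~\ref{triangle no intersection} to guarantee the homotopy never hits $S_R(z)$.

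For the first assertion, recall that $\widetilde{\rho_0}$ is the small-circle loop in the line $L$ through $\dot z$ and $(1/m,\dots,1/m)$, conjugated by the segment $\phi_0$ from $\dot z$ toward that point, while $\rho_0^*=\phi\cdot\rho_0^{\dag}\cdot\phi^{-1}$ lives near the line $L'_\e$ through the rescaled center. First I would set up the $2$-simplex $\Delta_2$ of Lemma~\ref{triangle no intersection}, whose three vertices $p_0,p_1,p_2$ are exactly $\dot z$, the center $(1/m,\dots,1/m)$ of the circle for $\widetilde{\rho_0}$, and the center-type point $(\frac{1-\e}{m-1},\dots,\frac{1-\e}{m-1},\e)$ relevant to $L'_\e$. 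By that lemma $\Delta_2^\circ$ misses $S_R(z)$, and a short check shows the whole simplex also avoids the coordinate hyperplanes $V(z_k)$ (the coordinates are all bounded away from $0$ on $\Delta_2$), so $\Delta_2\subset Z$ except along the single edge $p_1p_2$ lying in $H_{(0,\dots,0)}$. I would then slide the basepoint-approach segment and the small circle along $\Delta_2$: the conjugating segment $\phi_0$ deforms through segments in $\Delta_2$ to $\phi$, and the small circle around $p_1$ deforms to the small circle $\rho_0^{\dag}$ around $p_2$ by pushing its center along the edge $p_1p_2$ (keeping the circle radius small enough that it encircles only the branch of $S_R(z)$ through that edge). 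Since both circles bound small discs meeting $S_R(z)$ only in one smooth sheet, and that sheet varies continuously along $p_1p_2$, this is a homotopy in $Z$; concatenating gives $\widetilde{\rho_0}\simeq\rho_0^*$.

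For the second assertion, $\widetilde{\rho_{m,k}}$ and $\rho_{m,k}^{\dag}$ are both arcs in the $z_m$-coordinate, $z_m=\bold e(kt/p)\e$, differing only in the value of the $s$-coordinate: $s=1-(m-1)\e$ for $\widetilde{\rho_{m,k}}$ versus $s=\e+\delta$ for $\rho_{m,k}^{\dag}$, and $\rho_{m,k}^*$ is $\rho_{m,k}^{\dag}$ conjugated by $\phi$ at the start and $\sigma_m^k(\phi)$ at the end. Here I would use the product region $\{(s,z_m):s\in[\e+\delta,\,1-(m-1)\e],\ |z_m|=\e\}$ inside $\pi_m^{-1}(L')$: the key point is that this region misses $S_R(z)$, which follows because a point $(s,\bold e(u)\e)$ lies on some $H_I$ only when the triangle inequality in Lemma~\ref{triangle no intersection} degenerates, forcing $|z_m|$ to equal the other coordinates, i.e.\ $|\frac{1-s}{m-1}|=\e$, i.e.\ $s=1-(m-1)\e$ — the endpoint value we are moving away from, and even there only the sheet $H_{(0,\dots,0,-k')}$ is touched at isolated $z_m$. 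Sliding $s$ linearly from $1-(m-1)\e$ down to $\e+\delta$ while carrying the arc $z_m=\bold e(kt/p)\e$ gives a homotopy from $\widetilde{\rho_{m,k}}$ to $\rho_{m,k}^{\dag}$ rel nothing; tracking the endpoints, the start point traces out (the reverse of) $\phi$ and the end point traces out (the reverse of) $\sigma_m^k(\phi)$, so as paths rel endpoints we get $\widetilde{\rho_{m,k}}\simeq \phi\cdot\rho_{m,k}^{\dag}\cdot\sigma_m^k(\phi)^{-1}=\rho_{m,k}^*$.

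The main obstacle I anticipate is the bookkeeping of \emph{which} hyperplanes $H_I$ the various small discs and product regions are allowed to touch, and verifying that the deformations genuinely stay in the complement $Z$ rather than merely in $\C_z^m$ minus the coordinate hyperplanes. Concretely, one must check that when the center of the small circle slides along the edge $p_1 p_2$, no \emph{other} sheet of $S_R(z)$ creeps into the shrinking disc — this is where one invokes that $\delta$ is chosen much smaller than $\e$ and that, by Lemma~\ref{triangle no intersection}, near that edge only $H_{(0,\dots,0)}$ is present. The rest is the standard principle that a loop encircling one smooth branch of a hypersurface, together with its conjugating path, is determined up to homotopy by that branch, applied along a $1$-parameter family; I would state this once and reuse it for both parts.
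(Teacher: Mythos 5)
Your argument follows the same route as the paper's (very terse) proof: the first homotopy is exactly the sliding-along-$\Delta_2$ construction that the paper compresses into ``a direct consequence of Lemma \ref{triangle no intersection}'' (your extra check that $\Delta_2$ misses the coordinate hyperplanes is correct), and the second is the product-region homotopy behind the paper's ``it is easy to see''. One sentence in your second step is wrong, although the conclusion you need is true: you claim a point $(s,z_m)$ of the region with $|z_m|=\e$ can lie on some $H_I$ only when the triangle inequality degenerates, ``forcing $|z_m|$ to equal the other coordinates'', hence $s=1-(m-1)\e$, where the sheet $H_{(0,\dots,0,-k')}$ would be touched. Equality in the triangle inequality forces equal arguments, not equal moduli, and in fact nothing at all is touched at $s=1-(m-1)\e$; if it were, the path $\widetilde{\rho_{m,k}}$ itself would meet $S_R(z)$, contradicting that it is a path in $Z$. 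The correct (and simpler) estimate is: for real $s\in[\e+\delta,\,1-(m-1)\e]$ and $|z_m|=\e$, a point on $H_I$ would give
$$
1=\Big|\zeta_p^{i_1}\tfrac{1-s}{m-1}+\cdots+\zeta_p^{i_{m-1}}\tfrac{1-s}{m-1}+\zeta_p^{i_m}z_m\Big|\le (1-s)+\e\le 1-\delta,
$$
a contradiction; moreover the first $m-1$ coordinates equal $\tfrac{1-s}{m-1}>0$ and $z_m\neq 0$, so the whole product region, endpoints included, lies in $Z$, and your sliding homotopy (whose endpoint tracks are $\phi$ and $\sigma_m^k(\phi)$) goes through verbatim. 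With that sentence repaired, the proposal is correct and essentially coincides with the paper's intended argument.
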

\begin{proof}
The statement for $\rho_0^*$ is a direct consequence of 
Lemma \ref{triangle no intersection}.
It is easy to see that the paths $\widetilde{\rho_{m,k}}$ and $\rho_{m,k}^*$ 
are homotopy equivalent to each other in $Z$. 
\end{proof}
Under the group homomorphism 
$\f_*:\pi_1(Z,\dot{z})\to \pi_1(X,\dot{x})$, we have
\begin{align*}
\varphi_*(
\widetilde{\rho_{m,k}}\cdot \sigma_m^k(\widetilde{\rho_0})
\cdot 
\widetilde{\rho_{m,k}}^{-1}
)=
\rho_{m}^k\cdot \rho_0\cdot \rho_m^{-k}.
\end{align*}
Since the loop
$\widetilde{\rho_{m,k}}\cdot \sigma_m^k(\widetilde{\rho_0})
\cdot 
\widetilde{\rho_{m,k}}^{-1}$ 
is homotopy equivalent to
$\rho^*_{m,k}\cdot \sigma_m^k(\rho^*_0)\cdot 
{\rho^*_{m,k}}^{-1}$ 
in $Z$ by Lemma \ref{homo eq tubler},
to prove \eqref{conj prod in X},
it is enough to show the identity
\begin{align}
\label{neighborhood of M rel}
\iota_{\infty}(\widetilde{\rho_0'})=
&\rho^*_0\cdot 
(\rho^*_{m,1}\cdot \sigma_m(\rho^*_0)\cdot {\rho^*_{m,1}}^{-1})
\cdot
({\rho^*_{m,2}}\cdot \sigma_m^2({\rho_0^*})\cdot {\rho_{m,2}^*}^{-1})
\\
\nonumber
&\cdots 
(\rho_{m,p-1}^*\cdot \sigma_m^{p-1}(\rho_0^*)\cdot {\rho_{m,p-1}^*}^{-1})
\end{align}
in $\pi_1(U_Z-S(z),\dot{z})$.
Since 
$$
{\rho^*_{m,k}}\cdot \sigma_m^k({\rho_0^*})\cdot {\rho_{m,k}^*}^{-1}
=\phi{\rho^{\dag}_{m,k}}\cdot \sigma_m^k({\rho_0^{\dag}})\cdot 
{\rho_{m,k}^{\dag}}^{-1}
\phi^{-1},
$$
\eqref{neighborhood of M rel} is equivalent to 
the equality
\begin{align}
\label{neighborhood of M rel**}
{\rho_0'}^{\dag}=
&\rho^{\dag}_0\cdot 
(\rho^{\dag}_{m,1}\cdot \sigma_m(\rho^{\dag}_0)\cdot {\rho^{\dag}_{m,1}}^{-1})
\cdot
({\rho^{\dag}_{m,2}}\cdot \sigma_m^2({\rho_0^{\dag}})\cdot {\rho_{m,2}^{\dag}}^{-1})
\\
\nonumber
&\cdots 
(\rho_{m,p-1}^{\dag}\cdot \sigma_m^{p-1}(\rho_0^{\dag})\cdot {\rho_{m,p-1}^{\dag}}^{-1}).
\end{align}
For a sufficiently small $\e$ (for example, $\e<\frac{\sin (\pi/p)}{2(m-1)}$), we define the subsets $B_{\loc}, S(z)_{\loc}$ of
${\pi_m}^{-1}(L')$ by
\begin{align*}
&B_{\loc}=\{(s,z_m)\in {\pi_m}^{-1}(L') \mid |s|<  2\e, |z_m|< 2\e\},
\\
& S(z)_{\loc}=B_{\loc}\cap S(z)=
\{(s,z_m)\in B_{\loc} \mid z_m(s^p-z_m^p)= 0\},
\end{align*}
and show the equality \eqref{neighborhood of M rel**}
in $\pi_1(B_{\loc}-S(z)_{\loc},\dot{z})$.
In the space $B_{\loc}$, we have 
\begin{align*}
&B_{\loc}\cap H_{0,\dots,0,-k}=
\{(s,z_m)\in B_{\loc} \mid s=\bold e(-k/p)z_m\},
\end{align*}
and the paths
${\rho_0'}^{\dag},
\rho^{\dag}_0$ and
$\rho^{\dag}_{m,k}$
are contained in $B_{\loc}-S(z)_{\loc}$.
Then $B_{\loc}$ is stable under the action of 
$\sigma_m$.
We define a path $A_k$ in $B_{\loc}-S(z)_{\loc}$ by
$$
[0,1]\ni t \mapsto (s,z_m)=((\e+\delta)\bold e(-kt/p),\e)
$$
for $k=0,1,\dots,p-1$, and an action $\sigma_m'$ on $B_{\loc}$ by
$$
\sigma'_m:(s,z_m)\mapsto (\zeta_p s,z_m).
$$
Under these notations, we have the following lemma.
\begin{lemma}
\label{local lemma}
The loop
$\rho_{m,k}^{\dag}\cdot\sigma_m^k(\rho_0^{\dag})\cdot (\rho_{m,k}^{\dag})^{-1}$ 
is homotopy equivalent to
$$
\gamma_k=A_{k}\cdot {\sigma_m'}^{-k}(\rho_0^{\dag})\cdot A_{k}^{-1}
$$
in $B_{\loc}-S(z)_{\loc}$. The homotopy class of the loop $\gamma_k$ is an element in
$\pi_1(L'_{\e}\cap (B_{\loc}-S(z)_{\loc}),b_L)$.
As a consequence,
$\widetilde{\rho_{m,k}}\cdot\sigma_m^k(\widetilde{\rho_0})
\cdot\widetilde{\rho_{m,k}}^{-1}$
is  homotopy equivalent to $\phi \cdot \gamma_k \cdot \phi^{-1}$. 
See Figure \ref{Paths} for the paths ${\rho_0'}^{\dag}$ and $\gamma_k$.
\end{lemma}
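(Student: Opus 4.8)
The plan is to establish Lemma~\ref{local lemma} by a purely local homotopy argument inside the bidisc $B_{\loc}$, exploiting the fact that the only part of $S(z)$ meeting $B_{\loc}$ is the union of the coordinate locus $\{z_m=0\}$ and the $p$ lines $\{s=\bold e(-k/p)z_m\}$ ($k=0,\dots,p-1$); all the global hyperplanes $H_I$ with $I\neq(0,\dots,0,*)$ stay away by Lemma~\ref{triangle no intersection} and the smallness of $\e$. The key observation driving everything is that the action $\sigma_m'$ defined by $(s,z_m)\mapsto(\zeta_p s,z_m)$ permutes these $p$ lines in the same cyclic way that $\sigma_m$ does (since $\sigma_m^k(L'_\e\cap H_{0,\dots,0})\in H_{0,\dots,0,-k}$), but fixes the fibre $\{z_m=\e\}$ rather than rotating it; so conjugating $\sigma_m^k(\rho_0^\dag)$ by $\rho_{m,k}^\dag$ (which moves the base point along the $z_m$-circle) has the same effect, up to homotopy, as conjugating ${\sigma_m'}^{-k}(\rho_0^\dag)$ by the path $A_k$ (which moves the base point within $L'_\e$).

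First I would make the change of coordinates on $B_{\loc}$ explicit and verify that $B_{\loc}$ retracts onto, say, the subset where $|z_m|=\e$, so that every loop based at $\dot z$ can be pushed into the "cylinder" $\{|z_m|=\e\}$; this is where the hypothesis $\e<\sin(\pi/p)/(2(m-1))$ is used, guaranteeing the lines $s=\bold e(-k/p)z_m$ remain genuinely separated on each such circle. Next I would build the homotopy realizing $\rho_{m,k}^\dag\cdot\sigma_m^k(\rho_0^\dag)\cdot(\rho_{m,k}^\dag)^{-1}\simeq\gamma_k$: the loop $\sigma_m^k(\rho_0^\dag)$ is a small circle around the line $H_{0,\dots,0,-k}$ sitting in the fibre $\{z_m=\bold e(k/p)\e\}$, and $\rho_{m,k}^\dag$ drags its base point back to $\{z_m=\e\}$; I would slide this whole configuration, keeping it transverse to $S(z)_{\loc}$, until the little circle lies in $L'_\e$ itself — where it becomes ${\sigma_m'}^{-k}(\rho_0^\dag)$ — while the connecting path degenerates to $A_k$. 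Then I would check that $\gamma_k$, so realized, lies entirely in $L'_\e\cap(B_{\loc}-S(z)_{\loc})$ and hence defines a class in $\pi_1(L'_\e\cap(B_{\loc}-S(z)_{\loc}),b_L)$; conjugating by $\phi$ recovers the statement about $\widetilde{\rho_{m,k}}\cdot\sigma_m^k(\widetilde{\rho_0})\cdot\widetilde{\rho_{m,k}}^{-1}$, using Lemma~\ref{homo eq tubler} to pass between the tilde-paths and the starred ones.

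The main obstacle, I expect, is writing down the intermediate homotopy cleanly without it ever crossing $S(z)_{\loc}$ — in particular keeping the moving circle small enough that it encircles only the one line $H_{0,\dots,0,-k}$ and never touches $\{z_m=0\}$ while its base point is being transported through fibres $\{z_m=\bold e(\theta k/p)\e\}$, $\theta\in[0,1]$. This is a concrete but slightly fiddly two-parameter family in the bidisc; the clean way to organize it is to first do the homotopy for $k=1$ and then note that $\sigma_m$ (resp.\ $\sigma_m'$) conjugates the $k$-th picture into the $(k+1)$-th, so a single construction plus equivariance handles all $k$ at once. The remaining assertions — that $\gamma_k$ is a loop in $L'_\e\cap(B_{\loc}-S(z)_{\loc})$ based at $b_L$, and the final sentence about $\phi\cdot\gamma_k\cdot\phi^{-1}$ — are then immediate from the construction together with Lemma~\ref{homo eq tubler}.
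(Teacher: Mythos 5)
Your proposal is correct and follows essentially the same route as the paper: the paper also proves the first claim by an explicit one-parameter family of loops $\gamma_{k,\theta}$ ($0\le\theta\le k/p$), sliding the small circle around $H_{0,\dots,0,-k}$ through the fibres $z_m=\bold e(\theta)\e$ while the transport path (kept at $|s|=\e+\delta$) interpolates between $A_k$ and $\rho^{\dag}_{m,k}$, and then deduces the last assertion from Lemma \ref{homo eq tubler} by conjugation with $\phi$, exactly as you indicate. Your preliminary retraction onto $\{|z_m|=\e\}$ and the $\sigma_m$-equivariance shortcut are optional reorganizations of the same construction, not a different argument.
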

\begin{figure}[h]
\includegraphics[width=8.0cm]{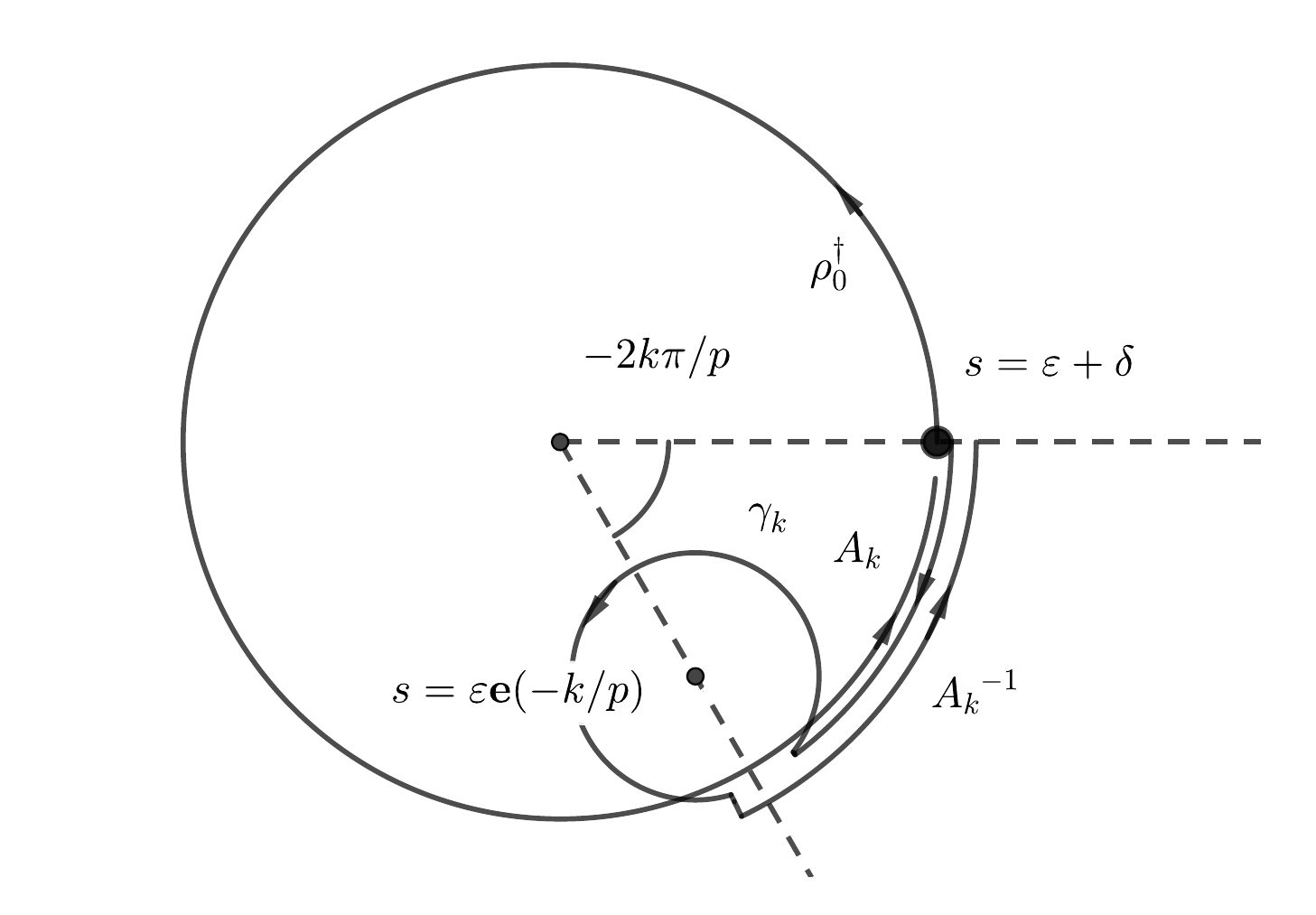}
\caption{The paths $\rho_0^{\dag}$ and $\gamma_k$}
\label{Paths}
\end{figure}

\begin{proof}
We explicitly construct a homotopy between
$\rho_{m,k}^{\dag}\cdot\sigma_m^k(\rho_0^{\dag})\cdot (\rho_{m,k}^{\dag})^{-1}$ 
and $\gamma_k$ in $B_{\loc}$ parameterized by $0\leq \theta\leq k/p$.
We fix $\theta$.
Let $L'_{\e,\theta}$ be the subset of $B_{\loc}$ defined by
$z_m=\bold e(\theta)\e$. Then the intersection 
$L'_{\e,\theta}\cap H_{0,\dots,0,-k}$ is 
$(s,z_m)=(\bold e(\theta-k/p)\e,\bold e(\theta)\e)$.
For $0\leq \theta\leq k/p$, we define a loop $\gamma_{k,\theta}$
in $B_{\loc}-S(z)_{\loc}$ by
$$
\gamma_{k,\theta}=\rho_{m,\theta}^{\dag}\cdot A_{k,\theta}
\cdot\rho^{\dag}_{0,\theta}\cdot A_{k,\theta}^{-1}\cdot
(\rho_{m,\theta}^{\dag})^{-1},
$$
where
\begin{enumerate}
 \item 
a path $\rho_{m,\theta}^{\dag}$ connecting
$(\delta+\e,\e)$ and $(\delta+\e,\bold e(\theta)\e)$:
$$
[0,1]\ni t \mapsto (\delta+\e ,\bold e(t\theta)\e),
$$
\item
a path $A_{k,\theta}$ connecting
$(\delta+\e,\bold e(\theta)\e)$ and
$((\delta+\e)\bold e(\theta-k/p),\bold e(\theta)\e)$:
$$
[0,1]\ni t \mapsto (s,z_m)=((\delta+\e)\bold e(t(\theta-k/p)),
\bold e(\theta)\e),
$$
\item
and a circle $\rho^{\dag}_{0,\theta}$ with  center $(\bold e(\theta-k/p)\e,\bold e(\theta)\e)$:
$$
[0,1]\ni t \mapsto (s,z_m)=((\e+\delta\bold e(t))\bold e(\theta-k/p)),
\bold e(\theta)\e).
$$
\end{enumerate}
Then we have
$$
\gamma_{k,0}=\gamma_k,\quad 
\gamma_{k,k/p}=\rho_{m,k}^{\dag}\cdot\sigma_m^k(\rho_0^{\dag})\cdot (\rho_{m,k}^{\dag})^{-1},
$$
which mean the homotopy equivalence of $\gamma_{k,0}$
and $\gamma_{k,k/p}$.
\end{proof}
\begin{proof}[Proof of Theorem \ref{th:reduction}]
(1) 
By Lemma \ref{local lemma}, we have the equality
\begin{align}
\label{local equality in infinitesimal line}
\gamma_0\gamma_1\cdots \gamma_{p-1}=
&\rho^{\dag}_0\cdot 
(\rho^{\dag}_{m,1}\cdot \sigma_m(\rho^{\dag}_0)\cdot {\rho^{\dag}_{m,1}}^{-1})
\cdot
({\rho^{\dag}_{m,2}}\cdot \sigma_m^2({\rho_0^{\dag}})\cdot {\rho_{m,2}^{\dag}}^{-1})
\\
\nonumber
&\cdots 
(\rho_{m,p-1}^{\dag}\cdot \sigma_m^{p-1}(\rho_0^{\dag})\cdot {\rho_{m,p-1}^{\dag}}^{-1})
\end{align}
in $\pi_1(B_{\loc}-S(z)_{\loc},b_L)$.
Since the loop $\gamma_0\gamma_1\cdots \gamma_{p-1}$
and the loop ${\rho_0'}^{\dag}$ are represented 
by loops in the complex one-dimensional subspace 
$L'_{\e}\cap (B_{\loc}-S(z)_{\loc})$ 
surrounding $(\bold e(-k/p),\e)$, $k=0,\dots, p-1$.
We 
easily see that these loops are homotopy equivalent
to each other in this subspace.
Thus we have proved the equality 
\eqref{neighborhood of M rel**}.

\smallskip\noindent
(2)\ 
The loop
$$
\rho_m\cdot\big(\rho_0\cdot (\rho_{m}\cdot \rho_0\cdot \rho_m^{-1})\cdot
(\rho_m^2\cdot \rho_0\cdot\rho_m^{-2})
\cdots 
(\rho_m^{p-1}\cdot \rho_0\cdot\rho_m^{1-p})\big)\cdot\rho_m^{-1}
$$
in $X$ is lifted to a loop 
$$
 (\rho^*_{m}\cdot \sigma_m(\rho^*_0)\cdot {\rho^*_m}^{-1})\cdot
({\rho^*_m}^2\cdot \sigma_m^2({\rho_0^*})\cdot ({\rho_m^*}^2)^{-1})
\cdots 
({\rho_m^*}^{p}\cdot \sigma_m^{p}(\rho_0^*)\cdot ({\rho_m^*}^{p})^{-1})
$$
in $Z$. By Lemma \ref{local lemma} again, the above element is  homotopy 
equivalent to $\gamma_1\gamma_2\cdots \gamma_{p}$.
We can show  that it is homotopy equivalent to ${\rho_0'}^{\dag}$ which is
equal to $\gamma_0\cdots \gamma_{p-1}$
by \eqref{neighborhood of M rel**}
in
$L'_{\e}\cap (B_{\loc}-S(z)_{\loc})$.
Therefore we have a relation 
\begin{align*}
&\rho_m\cdot\big(\rho_0\cdot (\rho_{m}\cdot \rho_0\cdot \rho_m^{-1})\cdot
(\rho_m^2\cdot \rho_0\cdot\rho_m^{-2})
\cdots 
(\rho_m^{p-1}\cdot \rho_0\cdot\rho_m^{1-p})\big)
\\
=
&\big(\rho_0\cdot (\rho_{m}\cdot \rho_0\cdot \rho_m^{-1})\cdot
(\rho_m^2\cdot \rho_0\cdot\rho_m^{-2})
\cdots 
(\rho_m^{p-1}\cdot \rho_0\cdot\rho_m^{1-p})\big)\cdot\rho_m 
\end{align*}
as elements of $\pi_1(X,\dot{x})$.
\end{proof}

\begin{cor}
\label{circular relation}
The elements $\rho_0,\rho_1, \dots, \rho_m$ in $\pi_1(X,\dot{x})$
satisfy the following relations
\begin{equation}
\label{eq:pi1-relations}
\rho_k\cdot \rho_{k'}=\rho_{k'}\cdot \rho_k,
\quad (\rho_0\cdot \rho_k)^p=(\rho_k\cdot \rho_0)^p
\end{equation}
for any $k,k'\in \{1,\dots,m\}$.  
\end{cor}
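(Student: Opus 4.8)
The plan is to derive the two families of relations separately, using the explicit form of the loops $\rho_1,\dots,\rho_m$ together with Theorem \ref{th:reduction} for the relation involving $\rho_0$. First I would establish the commutativity $\rho_k\cdot\rho_{k'}=\rho_{k'}\cdot\rho_k$ for $k\ne k'$ in $\{1,\dots,m\}$. This is essentially formal: by Definition \ref{def of rho} the lifts $\widetilde{\rho_k}$ and $\widetilde{\rho_{k'}}$ move the $k$-th and $k'$-th coordinates respectively, keeping all other coordinates fixed at $\e$, and throughout these paths the point stays in the polydisc $\{|z_i-\e|<\e/2\}$, which is contained in $Z$ because it avoids $V(z_i)$ and all hyperplanes $H_I$ (each $H_I$ has distance bounded below from $\dot z$). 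In that contractible polydisc the product loop $\widetilde{\rho_k}\cdot\sigma_k(\widetilde{\rho_{k'}})$ and $\widetilde{\rho_{k'}}\cdot\sigma_{k'}(\widetilde{\rho_k})$ are visibly homotopic (they trace the boundary of a product of two arcs, i.e.\ a $2$-torus coordinate patch), so their images under $\f_*$ give $\rho_k\rho_{k'}=\rho_{k'}\rho_k$ in $\pi_1(X,\dot x)$.

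Next I would treat $(\rho_0\cdot\rho_k)^p=(\rho_k\cdot\rho_0)^p$. For $k=m$ this is a direct consequence of Theorem \ref{th:reduction}: part (1) writes $\iota_\infty(\rho_0')$ as the telescoping product $\rho_0\cdot(\rho_m\rho_0\rho_m^{-1})\cdots(\rho_m^{p-1}\rho_0\rho_m^{1-p})$, and part (2) says this element commutes with $\rho_m$. Writing $w=\iota_\infty(\rho_0')$, the telescoping identity rearranges to $w\cdot\rho_m^{-p}\cdot\rho_m^{p}$... more precisely one checks directly that $\rho_0\cdot(\rho_m\rho_0\rho_m^{-1})\cdots(\rho_m^{p-1}\rho_0\rho_m^{1-p})\cdot\rho_m^{p}=(\rho_0\rho_m)^p$ and likewise $\rho_m^{p}\cdot\rho_0\cdot(\rho_m\rho_0\rho_m^{-1})\cdots(\rho_m^{p-1}\rho_0\rho_m^{1-p})=(\rho_m\rho_0)^p$ (both sides collapse since the $\rho_m^{\pm j}$ factors cancel consecutively). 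Then $w\rho_m^p=\rho_m^p w$ from part (2) gives exactly $(\rho_0\rho_m)^p=(\rho_m\rho_0)^p$.

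Finally, to get the relation for arbitrary $k\in\{1,\dots,m\}$, I would invoke the symmetry of the construction under permutation of the coordinates $x_1,\dots,x_m$: the singular locus $S(x)$, the space $X$, the base point $\dot x$, and the loop $\rho_0$ are all invariant (up to a canonical identification of fundamental groups) under the $S_m$-action permuting coordinates, and this action sends $\rho_k$ to $\rho_{\tau(k)}$. Applying the transposition swapping indices $k$ and $m$ carries the already-proved identity $(\rho_0\rho_m)^p=(\rho_m\rho_0)^p$ to $(\rho_0\rho_k)^p=(\rho_k\rho_0)^p$. The main obstacle here is making the permutation-equivariance argument fully rigorous — one has to check that the chosen complex line $L$ in Definition \ref{def of rho0} and the retraction data in \S\ref{subsection:inftes incl} can be taken $S_m$-symmetrically, or else argue more directly by redoing the proof of Theorem \ref{th:reduction} with the roles of $z_k$ and $z_m$ interchanged (which only uses that $V(z_k)$ is one of the coordinate hyperplanes, so the argument is verbatim). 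I expect the latter, coordinate-relabeling route to be the cleanest way to close the gap, since every lemma in \S\ref{subsection:inftes incl} is symmetric in the coordinates entering the linear forms defining the $H_I$.
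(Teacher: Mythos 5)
Your proposal follows essentially the same route as the paper: commutativity of $\rho_k$ and $\rho_{k'}$ because these loops live near the origin where the singular locus reduces to the coordinate hyperplanes (the paper phrases this as the abelianness of $\pi_1(U_O-S(x),\dot x)$ for a small neighborhood $U_O$ of the origin), the relation $(\rho_0\rho_m)^p=(\rho_m\rho_0)^p$ deduced from Theorem \ref{th:reduction} (1) and (2), and the general $k$ obtained by the symmetry of the construction in the coordinates, exactly as in the paper. Two local slips should be repaired, though neither changes the structure. First, the lifted arcs, in which the $k$-th coordinate runs along the circle of radius $\e$ from $\e$ to $\zeta_p\e$, do not stay in the polydisc $\{|z_i-\e|<\e/2\}$; what you actually need (and what holds for $\e$ small) is that the product of the two circular arcs, with the remaining coordinates frozen at $\e$, avoids all the hyperplanes $H_I$ as well as the coordinate hyperplanes, so the two concatenated lifts are homotopic rel endpoints inside that set. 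Second, the identity $\rho_m^{p}\cdot\iota_{\infty}(\rho_0')=(\rho_m\rho_0)^p$ is not a consecutive cancellation in the free group (the word that does collapse is $\rho_m\cdot\iota_{\infty}(\rho_0')\cdot\rho_m^{p-1}=(\rho_m\rho_0)^p$); it becomes true only after invoking part (2). The cleanest fix, which is what the paper does, is: $\iota_{\infty}(\rho_0')=(\rho_0\rho_m)^p\rho_m^{-p}$ commutes with $\rho_m$ by part (2), hence $(\rho_0\rho_m)^p$ commutes with $\rho_m$, and therefore $(\rho_m\rho_0)^p=\rho_m(\rho_0\rho_m)^p\rho_m^{-1}=(\rho_0\rho_m)^p$; with this one-line adjustment your argument coincides with the paper's proof.
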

\begin{proof}
For a sufficiently small neighborhood $U_O$ of the origin $O$,
$\pi_1(U_O-S(x),\dot{x})$ is commutative and
the elements $\rho_1, \dots, \rho_m$ are in the image of 
$\pi_1(U_O-S(x),\dot{x})$. Thus we have the first commutativity.

By Theorem \ref{th:reduction} (1),
the element
$
\iota_{\infty}(\rho_0')=(\rho_0\cdot \rho_{m})^p\cdot ({\rho_{m}})^{-p}
$
commutes with $\rho_{m}$. Thus $(\rho_0\cdot \rho_{m})^p$ also commutes with 
$\rho_{m}$. Hence, we have the second relation.
Since Proof of Theorem \ref{th:reduction} (1),(2) is symmetric for 
$\rho_1,\dots, \rho_m$,
the relation holds for arbitrary $k$ ($1\leq k \leq m$).
\end{proof}

\subsection{Generators of $\pi_1(X,\dot{x})$}
Let $G$ be a group, and $g_1, \dots, g_k$ be elements in $G$.
The subgroup generated by $g_1, \dots, g_k$ is denoted by
$\langle g_1, \dots, g_k\rangle.$
In this subsection, we prove the following theorem.
\begin{theorem}
\label{generator of fund of X}
Let $\rho_0,\rho_1, \dots, \rho_m$ be the elements
in $\pi_1(X,\dot x)$ defined in Definitions \ref{def of rho} and
\ref{def of rho0}.
Then we have
$$
\pi_1(X,\dot x)=\langle \rho_0, \rho_1,\dots,\rho_m \rangle.
$$
\end{theorem}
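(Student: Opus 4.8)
The strategy is induction on $m$, using the infinitesimal inclusion $\iota_\infty:\pi_1(X',\dot x')\to\pi_1(X,\dot x)$ together with the fibration structure coming from the projection $\pi_m$ onto the first $m-1$ coordinates. The base case $m=1$ is classical: $X=\C-S(x)$ is $\C$ minus the points $0$ and the $p$-th roots of $1$ (the zeros of $R(x)=1-x$ when $m=1$, so actually just $0$ and $1$), and its fundamental group is free on loops around the punctures, which are exactly $\rho_1$ (around $0$) and $\rho_0$ (around $1$) up to the standard generating relations; so $\pi_1(X,\dot x)=\langle\rho_0,\rho_1\rangle$.

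For the inductive step, I would work with the normal covering $\f:Z\to X$ with Galois group $(\Z/p\Z)^m$ and the sub-covering $Z\to (\C^\times_x)^m$. The space $Z=\C_z^m-\bigcup_k V(z_k)-\bigcup_I H_I$ is a complement of $m+p^m$ hyperplanes. The key geometric input is the projection $\pi_m:\C_z^m\to\C_{z'}^{m-1}$, whose restriction over $Z'=\C_{z'}^{m-1}-S(z')$ is, after removing the finitely many "bad" fibers where the hyperplanes $H_I$ become vertical or coincide, a locally trivial fiber bundle with fiber $\C$ minus finitely many points. Over the generic locus, applying the homotopy exact sequence of the fibration gives that $\pi_1$ of the total space is generated by (i) a section of $\pi_1$ of the base — realized by $\iota_\infty(\pi_1(Z',\dot z'))$ — and (ii) the $\pi_1$ of a fiber, which is free on loops around the intersection points of the generic vertical line with the $H_I$ and with $V(z_m)$. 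I would identify those fiber-loops: the loop around $V(z_m)=\{z_m=0\}$ pushes down to $\rho_m$, and the loops around the points $H_I\cap(\text{vertical line})$ push down, via the covering-transformation action of $\sigma_m$ studied in Lemma~\ref{local lemma}, to the conjugates $\rho_m^k\rho_0\rho_m^{-k}$ appearing in Theorem~\ref{th:reduction}. Combined with the inductive hypothesis $\pi_1(X',\dot x')=\langle\rho_0',\rho_1',\dots,\rho_{m-1}'\rangle$ and the identity \eqref{conj prod in X} expressing $\iota_\infty(\rho_0')$ in terms of $\rho_0$ and $\rho_m$, and $\iota_\infty(\rho_k')=\rho_k$ for $1\le k\le m-1$ (which follows from compatibility of the infinitesimal inclusion with the loop definitions), this yields $\pi_1(X,\dot x)=\langle\rho_0,\rho_1,\dots,\rho_m\rangle$.

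Concretely I would carry out the steps in this order. First, set up the bundle: choose the projection $\pi_m$ and verify that over the complement of a proper algebraic subset $\Sigma\subset\C_{z'}^{m-1}$ containing $S(z')$, the map $Z\cap\pi_m^{-1}(\C_{z'}^{m-1}-\Sigma)\to\C_{z'}^{m-1}-\Sigma$ is a fiber bundle with fiber a punctured affine line. Second, use the tubular-neighborhood picture of \S\ref{subsection:inftes incl} (the diagram of fundamental groups there) to realize $\iota_\infty$ as the section and to see that, because $Z'-U_R(z')$ deformation-retracts onto $Z'$ and $\pi_m^\circ$ restricts to a punctured disc bundle, the missing "bad fibers" do not contribute new generators — every loop in $Z$ is homotopic to a concatenation of a fiber loop near $\dot z$ and the $\iota_\infty$-image of a loop in $Z'$. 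Third, descend along $\f$: push everything down to $X$, use that $\f_*$ together with the deck loops $\rho_k$ generates (so $\pi_1(X,\dot x)$ is generated by $\f_*(\pi_1(Z,\dot z))$ and the $\rho_k$'s, but the $\rho_k$'s are already in the list), and identify the pushed-down fiber loops with $\rho_m$ and the $\sigma_m$-conjugates of $\widetilde{\rho_0}$, which by Lemma~\ref{local lemma} are $\f_*$-images giving $\rho_m^k\rho_0\rho_m^{-k}$. Fourth, invoke the inductive hypothesis and Theorem~\ref{th:reduction}(1) to rewrite $\iota_\infty(\rho_0')$ inside $\langle\rho_0,\rho_m\rangle$, concluding the induction.

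The main obstacle is the second step: controlling the behavior over the degenerate fibers of $\pi_m$ — the points of $\Sigma\setminus S(z')$ where two hyperplanes $H_I$ meet or a hyperplane becomes vertical — and showing they introduce no generators beyond $\iota_\infty(\pi_1(Z'))$ and the generic fiber group. This is exactly where one must use the precise tubular-neighborhood hypotheses of \S\ref{subsection:inftes incl} (that $S_R(z)\cap U_Z\subset\pi_m^{-1}(U_R(z'))$, that $U_Z$ is a genuine disc bundle, and that $Z'-U_R(z')\hookrightarrow Z'$ is a deformation retract), so that a loop based at $\dot z$ can first be homotoped into $\pi_m^{-1}(Z'-U_R(z'))$ — a genuine punctured-disc bundle over $Z'-U_R(z')$ — and there the homotopy exact sequence applies cleanly; the generic fiber over $\dot z'$ is the line $L'_\e$ minus its intersections with the $H_I$ and with $z_m=0$, whose $\pi_1$ is free on the loops $\rho_m$ and $\sigma_m^k(\widetilde{\rho_0})$-type loops. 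Once that reduction is in place, the descent and the appeal to induction are routine.
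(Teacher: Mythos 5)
Your overall strategy (a Zariski--van Kampen type argument for the projection $\pi_m$, combined with induction and Theorem \ref{th:reduction} to absorb $\iota_{\infty}(\rho_0')$) is close in spirit to the paper, which instead slices by the hyperplane $z_m=\e$ and assembles the slice by van Kampen before adding $\widehat{\rho_m}$. But as written your argument has two genuine gaps, both at the points you yourself flag. First, the identification of the fiber loops is wrong: the vertical line over $\dot z'$ meets \emph{all} $p^m$ hyperplanes $H_I$, and Lemma \ref{local lemma} only identifies the loops around the $p$ punctures coming from $H_{(0,\dots,0,-k)}$ (those with $I'=(0,\dots,0)$) with loops mapping to $\rho_m^k\rho_0\rho_m^{-k}$. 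The loops around the remaining $p^m-p$ punctures push down to meridians of $S_R(x)$ whose conjugating words involve $\rho_1,\dots,\rho_{m-1}$ as well; in the paper these are the classes $\Ad_I(\widetilde{\rho_0})$ with $\varpi_m(I)=I'\neq(0,\dots,0)$, and proving that they lie in $\langle\rho_0,\dots,\rho_m\rangle$ requires translating the local picture by $\sigma_{I'}$, conjugating by the paths $\widetilde{\rho_{I'}}$, and assembling the tubes by van Kampen --- this is the heart of the proof of Theorem \ref{generator for Z} and is absent from your plan. (Note also that at the symmetric base point $\dot z'=(\e,\dots,\e)$ some of these punctures coincide for suitable $(p,m)$, e.g.\ $p=2$, $m=3$, so the fiber through $\dot z$ is not even a generic fiber of your fibration.)

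Second, the proposed reduction ``every loop can first be homotoped into $\pi_m^{-1}(Z'-U_R(z'))$, a genuine punctured-disc bundle'' conflates two different spaces. The punctured-disc bundle of \S\ref{subsection:inftes incl} is $(\pi_m^{\circ})^{-1}(Z'-U_R(z'))$, which lies inside the tube $U_Z$ of $V(z_m)$; the image of its fundamental group in $\pi_1(Z,\dot z)$ is only $\langle\widehat{\rho_m},\iota_{\infty}(\pi_1(Z',\dot z'))\rangle$, and already on $H_1(Z)\cong\Z^{m+p^m}$ this misses the individual meridian classes of the $H_I$ (a level-$\e$ meridian of $H'_{I'}$ links the whole packet $\{H_{(I',i_m)}\}_{i_m}$), so for instance $\widetilde{\rho_0}$ itself cannot be homotoped into that bundle. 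If instead you mean the full preimage $Z\cap\pi_m^{-1}(Z'-U_R(z'))$, that space is \emph{not} a locally trivial punctured-disc (or punctured-line) bundle: the punctures collide over the locus where distinct $H_I$, $H_J$ meet, and this discriminant intersects $Z'-U_R(z')$, so the homotopy exact sequence does not apply ``cleanly'' and the degenerate-fiber problem --- the very issue you set out to handle --- remains untouched. The tubular-neighborhood hypotheses of \S\ref{subsection:inftes incl}, in particular $S_R(z)\cap U_Z\subset\pi_m^{-1}(U_R(z'))$, only control the geometry near $z_m=0$ and say nothing about crossings of the $H_I$ far from $V(z_m)$. To close the argument you would need either a careful treatment of the discriminant (showing the lifted meridians around its extra components die in $\pi_1(Z)$, and that the lifts of the $S(z')$-meridians can be taken inside $\iota_{\infty}(\pi_1(Z'))$), or the paper's route: the surjectivity statement $\pi_1(Z,\dot z)=\langle\pi_1(Z\cap\C_{z',\e}^{m-1},\dot z),\widehat{\rho_m}\rangle$ for the slice $z_m=\e$, followed by the van Kampen decomposition of the slice into $Z'_\e$ and the tubes over the $H'_{I'}$, with the local generators identified as the $\Ad_I(\widetilde{\rho_0})$.
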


For $I=(i_1, \dots, i_m) \in (\Z/p\Z )^m$, we have
the automorphism $\sigma_I$
of $Z$ defined in \eqref{product of sigma},
and the path $\widetilde{\rho_{I}}$ in $Z$ from $\dot{z}$ to 
$\sigma_I(\dot{z})$ by
$$
 \widetilde{\rho_{I}}:[0,1]\ni t \mapsto (\bold e(i_1t/p)\e,\dots,\bold e(i_mt/p)\e)\in Z,
 $$
where each representative $i_k$ $(1\le k\le m)$ of $\Z/p\Z$ is chosen 
from the set $\{0,1,\dots,p-1\}$.  Then $\widetilde{\rho_{I}}$ is a lifting of 
 $\rho_1^{i_1}\cdots \rho_m^{i_m}\in \pi_1(X,\dot{x})$ 
for the covering $\varphi:Z\to X$.
For this $I=(i_1, \dots, i_m)$, we set $I'=(i_1,\dots, i_{m-1},0)$.
Then the path 
$$
\widetilde{\rho_{I'}}\cdot
\sigma_{I'}(\phi \cdot \gamma_{i_m} \cdot \phi^{-1})
\cdot 
\widetilde{\rho_{I'}}^{-1}
$$
in $\pi_1(Z\cap \C_{z',\e}^{m-1},\dot{z})$ is  homotopy equivalent 
to
$$
\Ad_I(\widetilde{\rho_0})=
\widetilde{\rho_{I}}\cdot
\sigma_I(\widetilde{\rho_0})\cdot (
\widetilde{\rho_{I}})^{-1}
$$
as a loop in $Z$ 
by Lemma \ref{local lemma}.
Then we have
 $$
 \f_*(\Ad_I(\widetilde{\rho_0}))=
 (\rho_1^{i_1}\cdots \rho_m^{i_m})\rho_0 (\rho_1^{i_1}\cdots \rho_m^{i_m})^{-1}.
 $$
We define the loop $\widehat{\rho_k}$ in $Z$
by
$$
[0,1] \ni t \mapsto
(\e,\dots,\e,\overset{\text{$k$-th}}{\bold e(t)\e},\e,\dots,
\e)\in Z,
$$
which is a lifting of $\rho_k^p\in \pi_1(X,\dot{x})$.
To prove Theorem \ref{generator of fund of X}, we give the following theorem.

\begin{theorem}
\label{generator for Z}
Under the above notation, we have
\begin{equation}
\label{st gen}
\pi_1(Z,\dot{z})=
\langle \widehat{\rho_1},\dots,\widehat{\rho_m},
\Ad_I(\widetilde{\rho_0})\rangle
_{I\in (\Z/p\Z )^m}.
\end{equation}
\end{theorem}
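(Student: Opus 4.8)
The plan is an induction on the number $m$ of variables, the inductive mechanism being the covering $\f$, the infinitesimal inclusion $\iota_\infty$ of \S\ref{subsection:inftes incl}, and the local homotopies of Lemmas \ref{triangle no intersection}, \ref{homo eq tubler} and \ref{local lemma}. Write $G=\langle\widehat{\rho_1},\dots,\widehat{\rho_m},\ \Ad_I(\widetilde{\rho_0})\mid I\in(\Z/p\Z)^m\rangle$ for the subgroup in \eqref{st gen}; we must show $G=\pi_1(Z,\dot z)$. First I would identify the generators of $G$ geometrically. By Lemma \ref{triangle no intersection}, $\widetilde{\rho_0}$ is homotopic in $Z$ to a small meridian of the hyperplane $H_{(0,\dots,0)}$ based at $\dot z$; since $\sigma_I(H_{(0,\dots,0)})=H_{-I}$, the loop $\Ad_I(\widetilde{\rho_0})=\widetilde{\rho_I}\cdot\sigma_I(\widetilde{\rho_0})\cdot\widetilde{\rho_I}^{-1}$ is a meridian of $H_{-I}$ based at $\dot z$, so $G$ contains one meridian of each of the $p^m$ hyperplanes $H_I$. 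Moreover the small torus $T_\e=\{|z_1|=\dots=|z_m|=\e\}$ lies in $Z$ — since $|\z_p^{i_1}z_1+\dots+\z_p^{i_m}z_m|\le m\e<1$ on $T_\e$ — with coordinate circles $\widehat{\rho_1},\dots,\widehat{\rho_m}$; so the inclusion $Z\hookrightarrow(\C_z^\times)^m$ induces a surjection $\pi_1(Z,\dot z)\to\Z^m$ that is split by $\pi_1(T_\e)$, with kernel $N$ equal to the normal closure of those meridians. In particular $G$ maps onto $\Z^m$, and $G=\pi_1(Z,\dot z)$ is equivalent to $N\subseteq G$.

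Next I would use the inductive hypothesis for the $(m-1)$-dimensional space $Z'$ to get $\iota_\infty(\pi_1(Z',\dot z'))\subseteq G$. Indeed $\iota_\infty(\widehat{\rho_k'})=\widehat{\rho_k}$ for $1\le k\le m-1$, and, combining $\iota_\infty(\widetilde{\rho_0'})=\phi\cdot(\gamma_0\gamma_1\cdots\gamma_{p-1})\cdot\phi^{-1}$ — which follows from \eqref{local equality in infinitesimal line}, \eqref{neighborhood of M rel**} and the definition of ${\rho_0'}^{\dag}$ — with Lemma \ref{local lemma}, one obtains
\begin{equation}
\iota_\infty\big(\Ad_{I'}(\widetilde{\rho_0'})\big)\ \simeq\ \prod_{i_m=0}^{p-1}\Ad_{(I',i_m)}(\widetilde{\rho_0})
\qquad\text{in }\ \pi_1(Z,\dot z),
\end{equation}
and the right-hand side lies in $G$. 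Since also $\widehat{\rho_m}\in G$, it then suffices to prove the single assertion
\begin{equation}
\label{eq:gen-reduction}
\pi_1(Z,\dot z)=\big\langle\,\iota_\infty(\pi_1(Z',\dot z')),\ \widehat{\rho_m},\ \Ad_I(\widetilde{\rho_0})\mid I\in(\Z/p\Z)^m\,\big\rangle ,
\end{equation}
for then its right-hand side lies in $G$ while its left-hand side is $\pi_1(Z,\dot z)\supseteq G$. The base case $m=1$ of \eqref{eq:gen-reduction} is immediate: $Z=\C-(\{0\}\cup\{\z_p^{j}\mid j\in\Z/p\Z\})$ is a plane with $p+1$ punctures, $\widehat{\rho_1}$ is the meridian of $0$, the $\Ad_j(\widetilde{\rho_0})$ are meridians of the $p$-th roots of unity, and together with the radial arcs used in their definition they form a geometric basis of the free group $\pi_1(Z,\dot z)$.

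For the inductive step I would prove \eqref{eq:gen-reduction} by a van Kampen argument built on the tubular-neighborhood set-up of \S\ref{subsection:inftes incl}. The open subset $U={\pi_m^\circ}^{-1}(Z'-U_R(z'))\subseteq Z$ is a punctured-disc bundle over $Z'-U_R(z')$; since the normal bundle of $V(z_m)$ in $\C_z^m$ is trivial and $Z'-U_R(z')$ is a deformation retract of $Z'$, it is a product bundle, so $\pi_1(U,\dot z)=\iota_\infty(\pi_1(Z',\dot z'))\times\langle\widehat{\rho_m}\rangle$. Over the remaining part of a neighborhood of $V(z_m)$, namely over the tube $U_R(z')$ around each hyperplane $H'_{I'}$, the inclusion $S_R(z)\cap U_Z\subseteq\pi_m^{-1}(U_R(z'))$ makes the local model $\sigma_{I'}$-equivalent to the complement $\{(s,z_m)\mid z_m(s^p-z_m^p)\ne 0\}$ of Lemmas \ref{triangle no intersection}--\ref{local lemma}, whose fundamental group is generated by $\widehat{\rho_m}$ (the meridian of $z_m=0$), the loops $\gamma_0,\dots,\gamma_{p-1}$ — which, transported to $\dot z$ along $\phi$ and $\widetilde{\rho_{I'}}$, are the $\Ad_{(I',i_m)}(\widetilde{\rho_0})$ by Lemma \ref{local lemma} — and a lift of a meridian of $H'_{I'}$, which lies in $\iota_\infty(\pi_1(Z'))$. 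Assembling these contributions over $Z'$ and adjoining the loop $\widehat{\rho_m}$ around $V(z_m)$ should yield \eqref{eq:gen-reduction}.

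The hard step is precisely this assembly. The ``outer'' region $Z-\overline{U_Z}$ does not deformation-retract onto the slice $Z'-U_R(z')$, because the arrangement $\{H_I\cap\{z_m=c\}\}_I$ changes its combinatorial type as $c$ runs over $\C^\times$; thus $z_m\colon Z\to\C^\times$ is not a fibre bundle and $\pi_1(Z,\dot z)$ cannot simply be read off from a fibration. One must instead show directly that every loop in $Z$ is homotopic, within $Z$, to a concatenation of loops running along $T_\e$ (contributing the $\widehat{\rho_k}$), through the product bundle $U$ (contributing $\iota_\infty(\pi_1(Z'))$), and around the hyperplanes $H_I$ near $V(z_m)$ (contributing the $\Ad_I(\widetilde{\rho_0})$) — equivalently, that an excursion of a loop through the outer region introduces nothing new. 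Controlling such excursions is exactly the purpose of the explicit homotopies in \S\ref{subsection:inftes incl} and of the carefully chosen tubular neighborhoods; I expect the completed argument to reduce every loop to this standard form, establishing \eqref{eq:gen-reduction} and hence \eqref{st gen}.
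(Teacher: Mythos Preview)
Your inductive scaffold is the same as the paper's — both argue by induction on $m$, both use the infinitesimal inclusion to push $\pi_1(Z',\dot z')$ into $G$, and both invoke Lemma \ref{local lemma} to identify the local meridians with the $\Ad_I(\widetilde{\rho_0})$. The reduction $\iota_\infty(\Ad_{I'}(\widetilde{\rho_0'}))\in\Pi_{I'}\subset G$ is exactly what the paper uses, and your base case $m=1$ is fine.

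The genuine gap is the ``hard step'' you flag yourself. You try to cover $Z$ by the tubular piece $U_Z$ and an outer region $Z-\overline{U_Z}$, and you correctly observe that the outer region has no useful retraction onto a slice because the arrangement $\{H_I\cap\{z_m=c\}\}$ changes type as $c$ varies. You then leave this step to an unspecified homotopy argument. The paper avoids this problem entirely by a different decomposition: it does \emph{not} attempt to cover $Z$ with a tube around $V(z_m)$, but instead invokes a Zariski--Lefschetz slice statement. Because the affine hyperplane $\C^{m-1}_{z',\e}=\{z_m=\e\}$ meets every irreducible component of $S(z)$ except $V(z_m)$, one has
\[
\pi_1(Z,\dot z)=\big\langle\,\pi_1(Z\cap\C^{m-1}_{z',\e},\dot z),\ \widehat{\rho_m}\,\big\rangle,
\]
i.e.\ the full slice $Z\cap\{z_m=\e\}$ already surjects onto $\pi_1(Z)$ up to the single missing meridian $\widehat{\rho_m}$. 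This reduces the problem to analyzing an $(m-1)$-dimensional hyperplane complement $Z\cap\C^{m-1}_{z',\e}$, where the van Kampen argument you sketch (split into $Z'_\e$ and the tubes $\pi_m^{-1}(U_{I'})$) goes through cleanly and yields $\pi_1(Z\cap\C^{m-1}_{z',\e},\dot z)=\langle\widehat{\rho_1},\dots,\widehat{\rho_{m-1}},\Ad_I(\widetilde{\rho_0})\rangle_I$. Your attempted decomposition of $Z$ itself is the wrong one; the slice theorem is the missing idea, and it replaces your ``outer region'' analysis altogether.
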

\begin{proof}
We prove the proposition by induction on  $m$.
For $m=1$, the proposition holds obviously.
We assume 
that the proposition holds for $m-1$ ($m\geq 2$). 
Let $\widehat{\rho_1}', \dots, \widehat{\rho_{m-1}}'$
and $\Ad_{I'}(\widetilde{\rho_0'})$ ($I'\in (\Z/p\Z )^{m-1}$)
be loops in $Z'=\C_{z'}^{m-1}-S(z')$ constructed as the case of $m-1$. 
In \S \ref{subsection:inftes incl}, we see  
that the isomorphism $\widetilde{r}$ induces an isomorphism
$\pi_1(Z',\dot{z'})\xrightarrow{\iota_{\infty}}
\pi_1(Z'_{\e},\dot{z})$.
By the assumption of the induction, we have
\begin{align*}
\pi_1(Z'_{\e},\dot{z})
&=\langle \iota_{\infty}(\widehat{\rho_1}'), 
\dots, \iota_{\infty}(\widehat{\rho_{m-1}}'),
\iota_{\infty}(\Ad_{I'}(\widetilde{\rho_0'}))\rangle_ {I'\in (\Z/p\Z )^{m-1}}
\\
&=\langle \widehat{\rho_1}, 
\dots, \widehat{\rho_{m-1}},
\iota_{\infty}(\Ad_{I'}(\widetilde{\rho_0'}))
\rangle_ {I'\in (\Z/p\Z )^{m-1}}.
\end{align*}
Since the hyperplane $\C_{z',\e}^{m-1}$ 
intersects the components of $S(z)$ except 
the hyperplane $V(z_m)$,
we have
$$
\pi_1(Z,\dot{z})=
\langle\pi_1(Z\cap \C_{z',\e}^{m-1},\dot{z}),
\widehat{\rho_m} \rangle.
$$


For an element $I'\in (\Z/p\Z )^{m-1}$, the subgroup of 
$\pi_1(Z\cap \C_{z',\e}^{m-1},\dot{z})$ 
generated by 
$\{\Ad_I(\widetilde{\rho_0})\}_{\varpi_m(I)=I'}$
is denoted by $\Pi_{I'}$, where
$\varpi_m$ is a map ${(\Z/p\Z)^{m}}\ni(i_1, \dots, i_m) 
\mapsto (i_1, \dots, i_{m-1})\in(\Z/p\Z)^{m-1}$.
We consider the union 
$$
(Z\cap \C_{z',\e}^{m-1})-Z_{\e}'=
\bigcup_{I'\in (\Z/p\Z )^{m-1}} 
(\pi_m^{-1}(U_{I'})\cap \C_{z',\e}^{m-1}\cap Z),
$$
and the inclusion
$$
\bigg(\big[\pi_m^{-1}(U_{-I'})\cap  \sigma_{I'}(L'_{\e})\cap Z \big]
\cup \widetilde{\rho_{I'}}\cup \sigma_{I'}(\phi)\bigg)
\subset
\big(Z\cap \C_{z',\e}^{m-1}\big).
$$
Then, by Lemma \ref{local lemma},
the image of the homomorphism
$$
\pi_1(\big[\pi_m^{-1}(U_{-I'})\cap  \sigma_{I'}(L'_{\e})\cap Z \big]
\cup \widetilde{\rho_{I'}}\cup \sigma_{I'}(\phi),\dot{z})
\to
\pi_1(Z\cap \C_{z',\e}^{m-1},\dot{z})
$$
is generated by
$\Pi_{I'}$.
Thus we have
$$
\pi_1(Z\cap \C_{z',\e}^{m-1},\dot{z})=\langle
\pi_1(Z_{\e}',\dot{z}),\Pi_{I'}
\rangle_{I'\in (\Z/p\Z )^{m-1}}
$$
by van Kampen's theorem. 
By Theorem \ref{th:reduction},
$\iota_{\infty}(\Ad_{I'}(\widetilde{\rho_0'}))$ belongs to $\Pi_{I'}$ and
$$
\pi_1(Z\cap \C_{z',\e}^{m-1},\dot{z})=\langle
\widehat{\rho_1}, 
\dots, \widehat{\rho_{m-1}},
\Ad_I(\widetilde{\rho_0})
\rangle_{I\in (\Z/p\Z )^{m}}.
$$
Hence we have
$$
\pi_1(Z,\dot{z})=
\langle
\widehat{\rho_1}, 
\dots, \widehat{\rho_{m-1}},\widehat{\rho_{m}},
\Ad_I(\widetilde{\rho_0})
\rangle_{I\in (\Z/p\Z )^{m}},
$$
which completes Proof of Theorem \ref{generator for Z}.
\end{proof}

\begin{proof}[Proof of Theorem \ref{generator of fund of X}]
Let $\gamma$ be a loop representing an element in the fundamental group $\pi_1(X,\dot x)$ and 
$\widetilde{\gamma}$ be 
its lifting starting from $\dot{z}$ for the morphism $\f$.
The end point of $\widetilde{\gamma}$ is not 
$\dot{z}$ in general, but there exists an element $J=(j_1,\dots,j_m) 
\in (\Z/p\Z )^m$ such that it is equal to $\sigma_J(\dot{z})$.
Thus the composite 
$\widetilde{\gamma}\cdot \widetilde{\rho_J}^{-1}$
is a loop in $Z$ and it defines an element of
$\pi_1(Z,\dot z)$.
By Theorem \ref{generator for Z}, we have
$$
\widetilde{\gamma}\cdot \widetilde{\rho_J}^{-1} \in
\langle
\widehat{\rho_1}, 
\dots, \widehat{\rho_{m}},
\Ad_I(\widetilde{\rho_0})
\rangle_{I\in (\Z/p\Z )^{m}},
$$
and 
$$
 \f_*(\widetilde{\gamma}\cdot \widetilde{\rho_J}^{-1})
 \in
 \langle
 \rho_1^p, 
 \dots, \rho_{m}^p,
 (\rho_1^{i_1}\cdots \rho_m^{i_m})\rho_0 
 (\rho_1^{i_1}\cdots \rho_m^{i_m})^{-1}
 \rangle_{I\in (\Z/p\Z )^{m}}
 $$
for the representatives $i_k$ $(1\le k\le m)$  
chosen from $\{0,1,\dots,p-1\}$.
Therefore,  we have
 $$
 \f_*(\widetilde{\gamma}\cdot \widetilde{\rho_J}^{-1})
 =\gamma (\rho_1^{j_1}\cdots \rho_m^{j_m})^{-1},
 $$
if we choose the representatives $j_k$ $(1\le k\le m)$ 
from $\{0,1,\dots,p-1\}$, 
and the element $\gamma$ in $\pi_1(X,\dot x)$ 
belongs to 
the group $\langle\rho_0, \rho_1, \dots, \rho_{m}\rangle$.
\end{proof}

 \ifdefined\ifmaster
 \else
\end{document}
\fi

 \ifdefined\ifmaster
 \else
 \documentclass[12pt]{amsart}
 \input preamble.tex
\usepackage{amsmath}	
\begin{document}

\fi

\section{Monodromy representation} 
In this section, we study the monodromy representation of $\cF_C^{p,m}(a,B)$, 
which is the homomorphism from the fundamental group $\pi_1(X,\dot x)$ 
to the general linear group of the space of solutions to 
$\cF_C^{p,m}(a,B)$ around $\dot x$ given by the analytic continuation.

In Theorem \ref{generator of fund of X}, we have proved that
the fundamental group $\pi_1(X,\dot x)$ is generated by the loops
$\rho_0$, $\rho_1$,$\dots$, $\rho_m$.
In the following, we 
give an explicit formula for the circuit matrix $M_k$ 
(see \eqref{eq:circ-matrix}) along  
$\rho_k$ for each $k=0,1,\dots,m$.

\subsection{Fundamental system of solutions to  $\cF_C^{p,m}(a,B)$}
Let $a=\tr(a_1,\dots,a_p)$ be an element of $\C^p$ and
$B$
be a $p\times m$ matrix.
From now on, we assume the following conditions:
\begin{equation}
\label{eq:non-integral}
a_i-\sum_{k=1}^m b_{j_k,k},\notin \Z, 
\end{equation}
\begin{equation}
\label{eq:non-integral-addition}
b_{j,k}-b_{j',k}\notin \Z,
\end{equation}
where $1\le i\le p$, $1\le j<j'\le p$, $1\le k\le m$, and  
$J=(j_1,\dots,j_m)\in [p]^m$.
To give explicit expressions of independent
solutions to the system $\cF_C^{p,m}(a,B)$,  
we introduce maps $\eta_{j_k}$, matrices $(B)_J$ and vectors $a_J$ as follows.

\begin{definition}
\label{def:eta-aJBJ}
\begin{enumerate}
 \item 
For an integer $j\in [p]$ and a vector 
${\bf b}=\tr(b_1,\dots, b_p)\in \tr\C^p$, 
we set 
\begin{equation}
\label{index reflection column}
\eta_{j}({\bf b})={\bf b}+(1-b_j)\one_p.
\end{equation}
\item
Let $B=({\bf b}_1,\dots, {\bf b}_m)$ be a $p\times m$ matrix.
For $J=(j_1,\dots, j_m)\in [p]^m$, 
we set
$$
\mu_{J,k}=1-b_{j_k,k},\quad 
\mu_J=\sum_{k=1}^m\mu_{J,k}=\sum_{k=1}^m(1-b_{j_k,k}),
$$
and
\begin{equation}
\label{eq:fund-solutions1}
a_J=a+\mu_J\one_p, \quad
(B)_J=\Big(\eta_{j_1}({\bf b}_1),\dots,
\eta_{j_m}({\bf b}_m)\Big),
\end{equation}
where $\one_p=\tr(1,\dots,1)$.
\item
We consider the following condition:
\begin{equation}
\label{relax the condition for B}
\text{ there exists an integer $j \in [p]$ such that $b_{j,k}=1$ for each $k$.} 
\end{equation}
For a $p\times m$ matrix $B$ satisfying the condition
\eqref{relax the condition for B},
we extend the definition of
${F}_C^{p,m}(a,B;x)$ by the series \eqref{eq:HGS ser}.


\end{enumerate}

\end{definition}



\begin{remark}
\begin{enumerate}
\item 
It is easy to see that if $B$ satisfies the condition
\eqref{relax the condition for B} in Definition \ref{def:eta-aJBJ}.(3), 
then $(B)_J$ also satisfies this condition. 
Note that if a parameter $B$ is standard,
it satisfies this condition.

\item 
The definition of $B_J$ in the paper \cite{KMOT} is different from
$(B)_J$.
They are equal up to permutation of row in each column. 
\end{enumerate}
\end{remark}
\begin{proposition}[{\cite[Proposition 4.4]{KMOT}}]
\label{fact:fund-solutions}
For $k=1,\dots, m$, we have
$$
\Big(\prod_{k=1}^m x_k^{\mu_{J,k}}\Big)^{-1}
\ell_k(a,B)\Big(\prod_{k=1}^m x_k^{\mu_{J,k}}\Big)
=\ell_k(a_J,B_J).
$$
Therefore, under the conditions \eqref{eq:non-integral} and 
\eqref{eq:non-integral-addition}, 
there are linearly independent $p^m$ solutions 
\begin{equation}
\label{eq:fund-solutions}
\Phi_J(a,B;x)
=\Big(\prod_{k=1}^m x_k^{\mu_{J,k}}\Big)
{F}_C^{p,m}\Big(a_J,(B)_J;x\Big)
\end{equation}
to $\cF_C^{p,m}(a,B)$ 
on a small neighborhood in $\D$ 
indexed by $J\in [p]^m$.
If $J=(p,\dots, p)$, then we have
\begin{equation}
\Phi_{(p,\dots,p)}(a,B;x)
=
F_C^{p,m}\Big(a,B;x\Big).
\end{equation}
\end{proposition}

\subsection{Integral expressions and gamma factors}
\label{determination of gamma factors}
In this subsection, we use some consequences of twisted homology theory.
As for the case $m=1$, see \cite{Ma2}.

We define the monodromy field $K$ by the rational function field
\begin{equation}
\label{def of monodromy field}
K=\Q(\a_i,\b_{j,k})_{\substack{1\leq i,j\leq p, 
1\leq k \leq m}},
\end{equation}
where $\a_i=\ex(a_i) $ and $\b_{j,k}={\bf e}(b_{j,k})$.
In \cite{KMOT3},
we introduce a twisted cycle $\Delta_J$ with coefficients in $K$
for $J\in [p]^m$. 
For a standard parameter $B$, we set
\begin{equation}
\label{gamma normalized HGF}
\Phi^{\G}_J(a,B;x)=
\int_{\Delta_J}\!\!
\xi
\cdot \dfrac{dt}{t}.
\end{equation}
where $\xi$ is defined in Proposition \ref{fact:int-rep}.
The twisted homology group associated with $F_C^{p,m}(a,B;x)$ is spanned by
$\Delta_J$ ($J\in [p]^m$).
Then we will prove the following proposition in \cite{KMOT3}.
\begin{proposition}
\label{int expression}
Let $J=(j_1,\dots, j_m)$ and we set $e_j=b_{j,1}+\cdots+b_{j,m}-a_j-m$.
Then we have the equality:
\begin{equation}
\label{relation between Phi and Phi Gamma}
\Phi^{\G}_J(a,B;x)=c^{\G}_J\Phi_J(a,B;x),
\end{equation}
where ${\Phi}_J(a,B;x)$ is defined in 
\eqref{eq:fund-solutions},
$b^*_{j,k}$ is the $(j,k)$ entry of $(B)_J=(b^*_{j,k})_{j,k}$
defined in  \eqref{eq:fund-solutions1}, 
and
\begin{align}
\label{gamma factor for J}
c^{\G}_J=&c^{\G}_J(a,B)
=
\prod_{j=1}^p
\dfrac
{\G(1+e_j)\prod_{\substack{1\leq k \leq m\\ j_k\neq j}}\G(1-b^*_{j,k})}
{\G(1+e_j+\sum_{\substack{1\leq k \leq m\\ j_k\neq j}}(1-b^*_{j,k}))}.
\end{align} 
\end{proposition}

Let $\mathcal V$ be the 
$K$-linear span of
$\{\Phi^{\G}_J(a,B;x)\}_{J\in [p]^m}$.
Then $\mathcal V$ is identified with the 
twisted homology group via the correspondence 
$\Phi^{\G}_J(a,B;x)$ $\leftrightarrow$ $\Delta_J$.
By arraying these functions, we set a locally holomorphic 
column vector valued function
$$
\Psi^{\G}=\tr(
\Phi^{\G}_J(a,B;x))_{J\in [p]^m}.
$$
We identify $K^{[p]^m}$ with
$\mathcal V$ by the map
\begin{equation}
\label{hypergeom identification}
K^{[p]^m}\ni v \mapsto
v\Psi^{\G} \in \mathcal V.
\end{equation}

Let $\rho$ be a loop with base point $\dot x$ in $X$. 
For an element $\psi$ of $\mathcal{V}$ and  
a vector $\Psi$ with entries in $\mathcal{V}$, 
$\psi_\rho$ and $\Psi_\rho$ denote 
the analytic continuations of $\psi$ and $\Psi$ along $\rho$, 
which depend only on the homotopy class of $\rho$. 
By theory of twisted homology, 
the space $\mathcal V$ is stable under analytic continuations 
along loops with base point $\dot x$ in $X$.

\begin{definition}
We have a linear map 
$$\mathcal{M}_\rho:\mathcal{V}\ni \psi \mapsto \psi_\rho\in \mathcal{V},$$
which is called the circuit transformation of $\mathcal{V}$ along 
a loop $\rho$ with base point $\dot x$ in $X$.
By using the identification \eqref{hypergeom identification}, 
we represent the circuit transformation $\mathcal{M}_\rho$ by 
a matrix $M_\rho$, i.e.,  there uniquely exists a  matrix 
$M_\rho=M_\rho(a,B)$
in $GL(K^{[p]^m})$ such that 
$$
\Psi_{\rho}^{\G}=M_{\rho}\Psi^{\G}.
$$
The matrix $M_{\rho}$ is called the circuit matrix along $\rho$ 
with respect to $\Psi^{\G}$. 
The monodromy representation of $\mathcal{F}_C^{p,m}(a,B)$ is 
defined as the homomorphism 
$$\pi_1(X,\dot x)\ni \rho\mapsto \mathcal{M}_\rho\in GL(\mathcal{V})
\quad (\textrm{or }\ \pi_1(X,\dot x)\ni \rho\mapsto M_\rho\in GL(K^{[p]^m})),
$$
where $GL(\mathcal{V})$ denotes the general linear group of $\mathcal{V}$ 
over $K$.
\end{definition}

Since the fundamental group  $\pi_1(X,\dot x)$ is generated by the loops
$\rho_0$, $\rho_1$,$\dots$, $\rho_m$ defined in 
\eqref{eq:loops} and \eqref{eq:loop0},
the 
monodromy representation of $\mathcal{F}_C^{p,m}(a,B)$ 
is determined by the circuit matrices 
\begin{equation}
\label{eq:circ-matrix}
M_k=M_{\rho_k} \quad (k=0,1,\dots,m).
\end{equation}
By the following proposition,
the mutually commutative right actions of $M_1, \dots, M_m$ are simultaneously
diagonalized.
\begin{proposition}
\label{orthogonality and diagonal 1}
We fix an element $J=(j_1,\dots,j_m)$ in $[p]^m$. The vector 
$\bold e_{J}=(0,\dots, \overset{J \text{-th }}1,\dots, 0)\in K^{[p]^m}$ 
is an eigenvector
for each circuit matrix $M_k$ $(k=1,\dots,m)$ of 
eigenvalue $\b_{j_k,k}^{-1}$.
In other words, we have ${\bf e}_JM_k=\b_{j_k,k}^{-1}{\bf e}_J$
for $k=1,\dots, m$. Note that $\b_{j_k,k}\ne \b_{j'_k,k}$ for 
$1\le j_k<j'_k\le p$, $1\le k\le m$.
\end{proposition}
\begin{proof}
Since 
${\bf e}_J=\Phi^\G_J(a,B;x)\in \mathcal V$
is the product of 
$\prod_{k=1}^m x^{1-b_{j_k,k}}$ 
and a holomorphic function of $x$ with a non-zero constant term.
The function $\Phi^{\G}_J(a,B;x)$ corresponds to a 
$\beta_{j_k,k}^{-1}$-eigenvector of
the circuit matrix $M_k$ $(k=1,\dots,m)$.
\end{proof}
\begin{definition}
\label{eigen components}
\begin{enumerate}
\item
For $J=(j_1,\dots, j_m)\in [p]^m$, $\mathcal V_J$ denotes
the simultaneous eigenspace in $\mathcal V$ 
of the eigenvalues
$\beta_{j_1,1}^{-1},\dots, \beta_{j_{m},m}^{-1}$ 
for the right actions of $M_1, \dots, M_{m}$.
Then we have
$$
\mathcal V
=\underset{J\in [p]^{m}}
\sum
\mathcal V_{J},\quad
\mathcal V_J
=K\bold e_J \subset K^{[p]^m}.
$$
 \item 
For $j\in [p]$ and $1\leq k\leq m$, the 
$\beta_{j,k}^{-1}$ eigenspace of $M_k$ in $\mathcal V$ 
is $p^{m-1}$ dimensional and is denoted by $\mathcal V_{j,k}$. 
Then we have
\begin{equation}
\label{direct sum Mk}
\mathcal V=\underset{j \in [p]}\sum \mathcal V_{j,k}.
\end{equation}
We set $\mathcal V_{j,m}=\mathcal V_j$.
For $j\in [p]$, we define an isomorphism $E_j:K^{[p]^{m-1}}\to \mathcal V_j$
by
\begin{equation}
\label{nat incl add j} 
E_j:K^{[p]^{m-1}}\ni {\bf e}_{J'} \mapsto 
{\bf e}_{J'(j)}\in \mathcal V_j \subset \mathcal V = K^{[p]^m},
\end{equation}
where
$J'(j)=(j_1,\dots, j_{m-1},j)$. 
Via the identification $\mathcal V \simeq K^{[p]^m}$ 
in \eqref{hypergeom identification},
we define $P_j:\mathcal V \to K^{[p]^{m-1}}$
as the projection 
$$
K^{[p]^m}\ni (v_J)_J\mapsto (v_{J'(j)})_{J'\in [p]^{m-1}}\in K^{[p]^{m-1}}.
$$

\end{enumerate}
\end{definition}


\subsection{Monodromy representation and the infinitesimal inclusion}
Recall that $X'$ be the space defined in 
\eqref{one dim lower}.
Let $\rho_0',\rho_1',\dots, \rho_{m-1}'$ be the generators of
$\pi_1(X',\dot{x}')$ defined as the case of $m-1$.
By using the infinitesimal inclusion
$$
\iota_{\infty}:\pi_1(Z',\dot{z}') \to \pi_1(Z,\dot{z})
$$
in \eqref{inif incl X}, 
we define $N_0=M_{\iota_{\infty}(\rho_0')}$ as
the circuit matrix along $\iota_{\infty}(\rho_0')$.
Then we have the relation
\begin{equation}
\label{eq:reduction}
N_0= M_0( M_m M_0 M_m^{-1})\cdots ( M_m^{p-1} M_0 M_m^{1-p})
\end{equation}
of circuit matrices by Theorem \ref{th:reduction} (1).
\begin{proposition}
\label{stable under N_0}
Under the identification \eqref{hypergeom identification}, 
we have
\begin{align}
&\mathcal V_{j}M_k\subset \mathcal V_{j},\quad
\mathcal V_{j}N_0\subset \mathcal V_{j}\quad (1\leq j\leq p,\  k=1,\dots, m).
\end{align}
As a consequence, the right actions $M_1, \dots, M_{m}$ and $N_0$ on 
the space $\mathcal V$
induce those on $\mathcal V_{j}$.
\end{proposition}
\begin{proof}
 
By Theorem \ref{th:reduction} (2) and Corollary \ref{circular relation}, 
the matrices $M_k$ ($k=1,\dots, m$) and $N_0$ commute with 
$M_m$. Therefore for $v'\in K^{[p]^{m-1}}$, we have
\begin{align*}
&E_j(v')M_kM_m=E_j(v')M_mM_k=\beta_{j,k}^{-1}E_j(v')M_k,
\\
&E_j(v')N_0M_m=E_j(v')M_mN_0=\beta_{j,k}^{-1}E_j(v')N_0,
\end{align*}
and $E_j(v')M_k, E_j(v')N_0 \in \mathcal V_{j}$.
\end{proof}
By Proposition \ref{stable under N_0},
the space $\mathcal V_j$
is stable under the right action of 
$N_0$, which can be expressed as that of
$N_{0,j} \in GL(K^{[p]^{m-1}})$,
i.e.
\begin{align}
\label{E_j and v'}
E_j(v'N_{0,j})=E_j(v')N_{0}
\end{align}
for $v'\in K^{[p]^{m-1}}$.


We consider the restrictions of
hypergeometric functions  
to $V(x_m)$ defined in \eqref{def of vxm}.
For a matrix $B=({\bf b}_1,\dots, {\bf b}_m)$, 
we set a matrix $B'=({\bf b}_1,\dots, {\bf b}_{m-1})$.
By definition \eqref{eq:HGS ser} of the series $F_C(a,B;x)$,
we have
\begin{equation}
\label{eqn:series restriction original} 
F_C^{p,m}(a,B;x)\mid_{x_m=0}=F_C^{p,m-1}(a,B';x'),
\end{equation}
where $*\mid_{x_m=0}$ denotes the restriction of a function 
to the set $V(x_m)$ and
$x'=(x_1, \dots, x_{m-1})$.
We generalize \eqref{eqn:series restriction original} 
into
\begin{equation}
\label{system of restrictions} 
\big(x^{b_{j,m}-1}\Phi_{J'(j)}(a,B;x)\big)\mid_{x_m=0}
=\Phi_{J'}(a(j),B',x'),
\end{equation}
where
$$
a(j)=a_{{\bf p}'(j)}=a+(1-b_{j,m}){\bf 1}_p, \quad {\bf p}'(j)=
(\underset{\text{$(m-1)$-times}}{\underbrace{p,\dots,p} },j).
$$

We compare the monodromy representation on 
$\mathcal V_j$ in Proposition \ref{stable under N_0}
and that of $\cF_C^{p,m-1}(a(j),B')$ via the infinitesimal inclusion 
$\iota_{\infty}:\pi_1(Z',\dot{z}') \to \pi_1(Z,\dot{z})$ using twisted homology
theory. To use twisted homology theory for a system of differential
equations $\cF_C^{p,m-1}(a(j),B')$, we define
$\Phi^{\Gamma}_{J'}(a(j),B',x')$ by
$$
\Phi^{\Gamma}_{J'}(a(j),B',x')={c'}^{\G}_{J'}\Phi_{J'}(a(j),B',x'),
$$
where $(B')_{J'}=({b'}^*_{j,k})_{j,k}$, 
$e_\ell'=e_\ell+b_{j,m}-b_{\ell,m}$, and
\begin{align}
\label{gamma factor for J}
{c'}^{\G}_{J'}=
\prod_{j=1}^p
\dfrac
{\G(1+e_j')\prod_{\substack{1\leq k \leq m-1\\ j_k\neq j}}
\G(1-{b'}^*_{j,k})}
{\G(1+e_j'+\sum_{\substack{1\leq k \leq m-1\\ j_k\neq j}}
(1-{b'}^*_{j,k}))}.
\end{align} 
Let $K'$ be the field defined by
$$
K'=\Q(\a_i,\b_{j,k})_{\substack{1\leq i,j\leq p, 
1\leq k \leq m-1}},
$$
and $\mathcal V'_j$ be the $K'$-vector space generated by
$\Phi_{J'}^{\G}(a(j),B',x')$ for $J'\in [p]^{m-1}$.
Using the basis
$\{\Phi_{J'}^{\G}(a(j),B',x')\}$,
we identify the space $\mathcal V'_j$ with
${K'}^{[p]^{m-1}}$ similarly to the identification 
\eqref{hypergeom identification}.
The circuit matrix along $\rho'_0$ with respect to this basis
is denoted by $M'_{0,j}$.

We define an isomorphism 
\begin{equation}
\label{isomorphism with monodromy action} 
\varrho_*:\mathcal V'_{j}\otimes_{K'}K 
\ni
\Phi_{J'}^{\G}(a(j),B',x')
\mapsto
\Phi_{J'(j)}^{\G}(a,B;x)\in
\mathcal V_{j}
\end{equation} 
for $J'=(j_1, \dots, j_{m-1})\in [p]^{m-1}$. 
\begin{proposition}
\label{comparizon of gamma factor}
\begin{enumerate}
 \item 
The quotient
$$
\gamma_{j}=\dfrac
{c^{\G}_{J'(j)}}
{{c'}^{\G}_{J'}}
=\prod_{1\leq \ell \leq p, \ell\neq j}
\dfrac
{\G(1+e_\ell)\G(b_{j,m}-b_{\ell,m})}
{\G(1+e'_\ell)}
$$
depends only on $j$, and is independent of $J'$.

\item
Therefore 
under the isomorphism 
$\varrho_*$,
we have
$$
{M'_{0,j}}=
{N_{0,j}}.
$$
The kernel of $M'_{0,j}$ 
and that of 
$N_{0,j}$ 
coincide via the isomorphism $\varrho_*$.

\end{enumerate}
\end{proposition}
\begin{proof}
(1) In the expression \eqref{gamma factor for J}
of ${c'}^{\G}_{J'}$, we check the dependence on $J'$ of 
the factors 
$$
\G(1+e_\ell+\sum_{\substack{1\leq k \leq m\\ j_k\neq \ell}}(1-{b'}^*_{\ell,k}))
$$
for 
$c^{\G}_{J'(j)}$ and for ${c'}^{\G}_{J'}$.
Since
\begin{align*}
&1+e'_\ell+\sum_{\substack{1\leq k \leq m-1\\ j_k\neq \ell}}(1-{b'}^*_{\ell,k})
=1+e_\ell+\sum_{\substack{1\leq k \leq m\\ j_k\neq \ell}}(1-b^*_{\ell,k}),
\end{align*}
these factors cancel, thus we have the statement (1).

\smallskip\noindent
(2) 
We set
\begin{align*} 
&\Psi_{j}=\tr(
\Phi_{J'(j)}(a,B;x))_{J'\in [p]^{m-1}},\quad
\Psi_{j}^{\G}=\tr(
\Phi^{\G}_{J'(j)}(a,B;x))_{J'\in [p]^{m-1}}.
\\
&\Psi'_{j}=\tr(
\Phi_{J'}(a(j),B';x'))_{J'\in [p]^{m-1}},\quad
{\Psi'}^{\G}_{j}=\tr(
\Phi^{\G}_{J'}(a(j),B';x'))_{J'\in [p]^{m-1}}.
\end{align*}
Then by (1), we have
$$
\Psi^{\G}_{j}=C\Psi_{j},\quad
{\Psi'}_{j}^{\G}=C'\Psi'_{j},\quad C=\gamma_jC'
$$
with $C=\diag(c^\G_J), C'=\diag({c'}^\G_{J'})$.
 The coordinate $x_m=\e $ is constant under
the image of the infinitesimal inclusion
and the circuit matrix is kept invariant under the small deformation of 
the base point.
Therefore by \eqref{system of restrictions},
there exists a matrix  $M^*_{0,j}$ such that
\begin{align*}
&\Psi_{j,\iota_{\infty}(\rho'_0)}=M^*_{0,j}\Psi_{j},\quad
\Psi'_{j,\rho_0'}=M^*_{0,j}\Psi'_{j}.
\end{align*}
On the other hand, by the definitions of
$N_{0,j}$ and $M'_{0,j}$, we have
\begin{align*}
&\Psi^{\G}_{j,\iota_{\infty}(\rho'_0)}=N_{0,j}\Psi^{\G}_{j},\quad
{\Psi'}^{\G}_{j,\rho_0'}=M'_{0,j}{\Psi'}^{\G}_{j}.
\end{align*}
These relations imply $N_{0,j}=M'_{0,j}$.
\end{proof}

\subsection{Monodromy invariance of the intersection form}
\label{subsec:intersection}
We recall some properties
on intersection theory for twisted 
homology groups. 
Let $k\mapsto  k^{\vee}$ be an involution 
of $K$ obtained by replacing 
$(a,B)$ by $(-a,-B)$.
It induces an involution $M\mapsto M^\vee$ on $GL_n(K)$.
For $K$-vector spaces $L$ and $M$, 
 an additive map $\varphi:L \to M$ is called
a $K$-conjugate map, if $\varphi(kv)=k^{\vee}\varphi(v)$ for any $k\in K$ and
$v\in L$.
To study the intersection pairing of twisted homology groups, 
we consider the fundamental solutions 
$\{\Phi^{\G}_J(-a,-B;x)\}_{J\in [p]^m}$ 
to $\cF_C^{p,m}(-a,-B)$ and define 
$\Psi^{\vee}$ by
\begin{align*}
&\Psi^{\vee}=\Psi(-a,-B)=\ ^t(
\Phi^{\G}_J(-a,-B;x))_{J\in [p]^m}.
\end{align*}
Let $\mathcal V^{\vee}$
be the $K$-vector space spanned by 
$\{\Phi^{\G}_J(-a,-B;x)\}_{J\in [p]^m}$. It is identified with $K^{[p]^m}$.
By the intersection form of twisted homology groups 
(\cite[Lemma 5.2]{Ma1}), we have the $K$-bilinear form
on $\mathcal V\times \mathcal V^{\vee}$.
Using this $K$-bilinear form and the natural $K$-conjugate map 
$$
\mathcal V \ni \varphi(a,B;x)\mapsto \varphi(-a,-B;x)\in \mathcal V^{\vee},
$$
we obtain the non-degenerate skew bilinear paring $\langle *,*\rangle$ 
on $\mathcal V \times \mathcal V$. 
Via the identification \eqref{hypergeom identification},
the corresponding skew symmetric pairing on $K^{[p]^m}\times K^{[p]^m}$
is also denoted by $\langle *,*\rangle$.
Then it satisfies
\begin{equation}
\label{skew bilinear}
\langle ku , w \rangle=k\langle u , w \rangle=
\langle u , k^\vee w \rangle, 
\ (u,w \in K^{[p]^m}, k\in K).
\end{equation}
Moreover it is invariant under the right multiplication of
any circuit matrix, i.e.
\begin{equation}
\label{intersection invariant}
\langle uM_{\rho} , wM_{\rho} \rangle=\langle u , w \rangle. 
\end{equation}
By the identification 
\eqref{hypergeom identification},
the intersection form is given as
$$
\langle u , w \rangle=u \cH\ ^t w^\vee
$$
for $u,w \in K^{[p]^m}$.
Since ${\bf e}_J^{\vee}={\bf e}_J$, 
the intersection matrix $\cH$ is given by
\begin{equation}
\label{intersection matrix total}
\cH
=(\langle{\bf e}_I,{\bf e}_J\rangle)_{I,J\in [p]^m}.
\end{equation}
Then by properties \eqref{skew bilinear} 
and \eqref{intersection invariant}, we have
\begin{equation}
\label{eq:keep-IM1}
\ ^t\mathcal H^{\vee}=\mathcal H,\quad M_\rho\cH\tr M_\rho^{\vee}=\cH.
\end{equation} 
\begin{proposition}
\label{orthogonality and diagonal}
If $I$ and $J$ are different elements in $[p]^m$,  
then  ${\bf e}_I\mathcal H\ ^t{\bf e}^{\vee}_J=0$. 
In other words, we have the orthogonal property
between the bases $\{\Phi^{\G}_J(a,B;x)\}_{J\in [p]^m}$ and
$\{\Phi^{\G}_J(-a,-B;x)\}_{J\in [p]^m}$, i.e.
$\cH$ is a diagonal matrix.
\end{proposition}
\begin{proof}
Let $I=(i_1,\dots,i_m)$ and $J=(j_1,\dots,j_m)$ be different 
elements in $[p]^m$.
Choose an index $k$ such that $i_k \neq j_k$. Then we have
$$
\tr M_k^{\vee}\tr{\bf e}_J=\tr ({\bf e}_JM_k)^{\vee}=
(\b_{j_k,k}^{-1}\tr{\bf e}_J)^{\vee}=
\b_{j_k,k}\tr{\bf e}_J,
$$
and
$$
{\bf e}_I\mathcal H\tr{\bf e}_J=
{\bf e}_IM_k \mathcal H\tr M_k^{\vee}{\bf e}_J=
{\bf e}_I\b_{i_k,k}^{-1} \mathcal H \b_{j_k,k}{\bf e}_J.
$$
Since $\b_{i_k,k}^{-1} \b_{j_k,k} \neq 1$, we have 
${\bf e}_I\mathcal H\tr{\bf e}_J=0$.
\end{proof}
We define the intersection matrix 
$\mathcal H_j$ on $K^{[p]^{m-1}}$ by
\begin{equation}
\label{def of H_j} 
\mathcal H_j=\big(E_j({\bf e}_{I'})\,\mathcal H \tr E_j({\bf e}_{J'}\big)^{\vee})
_{I',J'\in [p]^{m-1}},
\end{equation}
where $E_j$ is given in \eqref{nat incl add j}.
Note that $\mathcal H_j$ is a submatrix of $\mathcal H$.
%

\subsection{$H$-reflections}
\label{subsec:H reflection}
\begin{definition}[$H$-reflection]
\label{def:H reflection}
Let $n$ be a positive integer and
$H$ be an element of $GL_n(K)$ satisfying 
$\ ^tH^{\vee}=H$.
A matrix $M\in GL_n(K)$ is called
an $H$-reflection, if it
satisfies 
the following conditions. 
\begin{enumerate}
 \item 
The image 
$$
\Im(r(M-I_n))=\{{u}(M-I_n)\mid {u}\in K^n\}
$$
of $r(M-I_n)$ 
is a one-dimensional $K$-vector space.
Here $I_n$ is the identity matrix in $GL_n(K)$.
\item
The following equality holds:
\begin{equation}
\label{eq:keep-IM}
M H\tr M^\vee=H.
\end{equation} 
\end{enumerate}
Let $M$ be an $H$-reflection.
A non-zero element of $\Im(r(M-I_n))$ is called a reflection
vector of $M$. The determinant $\det(M)$ of $M$ is 
called the reflection eigenvalue of $M$, and is denoted by 
$\lambda=\lambda(M)$.
\end{definition}
The kernel $\Ker(r(M-I_n))=\{{w}\mid 
{w}M={w}\}$ of the right action of $M-I_n$
is an $(n-1)$-dimensional $K$-vector space.

\begin{proposition}
\label{prop: H-reflection}
Let
$M$ be an $H$-reflection
in Definition \ref{def:H reflection} and 
$\lambda=\lambda(M)$ be its reflection eigenvalue.
We choose a reflection vector ${\bf v}$ of $M$.
\begin{enumerate}
\item
The one-dimensional space $\Im(r(M-I_n))$ is stable under the 
action of $M$, and
we have ${\bf v}M=\lambda {\bf v}$.
 \item 
We have
$$
\Ker(r(M-I_n))=
\{{u}\mid {u}H\tr {\bf v}^{\vee}=0\}.
$$
Conversely, if ${\bf v'} \in K^n$ satisfies the condition
$$
\Ker(r(M-I_n))=
\{{u}\mid {u}H\tr {\bf v'}^{\vee}=0\},
$$
then ${\bf v'}$ is a reflection vector.

\item
We choose an element ${\bf v^*}\in K^n$ such that
${\bf v^*}(M-I_n)={\bf v}$.
Then we have
${\bf v^*}H\tr {\bf v}^{\vee}\neq 0$
and
\begin{equation}
\label{H reflection}
M=I_n+
\dfrac{H\tr {\bf v}^{\vee}}{{\bf v^*}H\tr {\bf v}^{\vee}}{\bf v}.
\end{equation}

\item
If ${\bf v}H\tr {\bf v}^{\vee}\neq 0$, then $\lambda\neq 1$ and
we may choose ${\bf v^*}$ as $\frac{1}{\lambda-1}{\bf v}$,
and 
\begin{equation}
\label{H reflection 2}
M=I_n+
(\lambda-1)\dfrac{H\tr {\bf v}^{\vee}}{{\bf v}H\tr {\bf v}^{\vee}}{\bf v}.
\end{equation}
\end{enumerate}
\end{proposition}

\begin{proof}
(1) 
The first assertion follows from the equality
$M(M-I_n)=(M-I_n)M$.
Then the right action of $M$ acts on the exact sequence
$$
0\to \Ker(r(M-I_n)) \to K^n \to \Im(r(M-I_n)) \to 0.
$$
Since the action of $M$ on $\Ker(r(M-I_n))$ is identity
and the action of $M$ on $\Im(r(M-I_n))$ is the multiplication by 
the determinant of $M$, we have ${\mathbf v} M=\lambda{\mathbf v}$.

\smallskip\noindent
(2) 
Since $H$ and $M$ are invertible, ${u}$ is contained in 
$\Ker(r(M-I_n))$ if and only if
$$
{u}(M-I_n)H \tr M^{\vee}\tr {x}^{\vee}=0
$$
for any $x\in K^n$. By the equality \eqref{eq:keep-IM}, we have
$$
{u}(M-I_n)H\tr M^{\vee}\tr {x}^{\vee}
={u}H(I_n-\tr M^{\vee})\tr {x}^{\vee}
={u}H\tr\bigg({x}(I_n-M) \bigg)^{\vee}.
$$
Since ${\bf v}$ is a reflection vector of $M$,
${u}$ is contained in $\Ker(r(M-I_n))$ if and only if
${u}H\tr {\bf v}^{\vee}=0$. Since ${\bf v}$ is 
a generator of the one-dimensional space
$$
\{w\in K^n\mid {u}H\tr w^{\vee}=0 \text{ for any }{u} 
\in \Ker(r(M-I_n))\},
$$
we have the second statement.

\smallskip\noindent 
(3)
Since ${\bf v^*}$ is not contained in $\Ker(r(M-I_n))$,
we have ${\bf v^*}H\tr {\bf v}^{\vee}\neq 0$.
We check the equality \eqref{H reflection}
by applying the right hand side of \eqref{H reflection 2}
to an element ${u}$ in $K^n$ from the right.
If ${u}$ is contained in $\Ker(r(M-I_n))$, then we have
$$
{u}\bigg(I_n+
\dfrac{H\tr {\bf v}^{\vee}}{{\bf v^*}H\tr {\bf v}^{\vee}}{\bf v}\bigg)
=
\bigg({u}+
\dfrac{{u}H\tr {\bf v}^{\vee}}{{\bf v^*}H\tr {\bf v}^{\vee}}{\bf v}\bigg)
={u}={u}M.
$$
As for ${\bf v^*}$, we have
\begin{align*}
{\bf v^*}\bigg(I_n+
\dfrac{H\tr {\bf v}^{\vee}}{{\bf v^*}H\tr {\bf v}^{\vee}}{\bf v}\bigg)
=&
\bigg({\bf v^*}+
\dfrac{{\bf v^*}H\tr {\bf v}^{\vee}}{{\bf v^*}H\tr {\bf v}^{\vee}}{\bf v}\bigg)
={\bf v}+{\bf v^*}
\\
=&{\bf v^*}(M-I_n)+{\bf v^*}={\bf v^*}M.
\end{align*}
Since $\Ker(r(M-I_n))$ and ${\bf v^*}$ generate $K^n$, we have the identity
\eqref{H reflection}.

\smallskip\noindent
(4) 
If ${\bf v}H\tr {\bf v}^{\vee}\neq 0$, then ${\bf v} \notin \Ker(r(M-I))$.
Since ${\bf v}$ is an eigenvector of $M$ which is not contained in
$\Ker(r(M-I))$, the reflection eigenvalue is not $1$.
If the reflection eigenvalue $\lambda=\lambda(M)$ is not $1$,
$
{\bf v}(M-I_n)=(\lambda-1){\bf v}
$
and we may choose ${\bf v^*}$ as $\frac{1}{\lambda-1}{\bf v}$.
\end{proof}

The following lemma is a key to obtain the circuit matrix $M_0$ 
along $\rho_0$\begin{lemma}.
\label{lem:1-eigen-space}
The circuit matrix $M_0\in GL_{p^m}(K)$ is an $\mathcal H$-reflection.
\end{lemma}
\begin{proof}
We show in \cite{KMOT3} that the matrix $M_0$ satisfies the
condition (1) of Definition \ref{def:H reflection}.
The matrices $M_0$ and $\mathcal H$ satisfy 
the condition (2) of Definition \ref{def:H reflection}
by the equality \eqref{eq:keep-IM1}.
\end{proof}

The following proposition will also be proved in \cite{KMOT3}.  
\begin{proposition}
\label{comoponent of van cycle}
Let $\mathbf{v}$ be a reflection vector of $M_0$ and
$$
\mathbf{v}=\sum_{J\in [p]^m}{\vcc}\quad
({\vcc} \in \mathcal V_J)
$$
be the simultaneous eigen decomposition of ${\bf v}$ with respect to
$M_1$,$\dots$,$M_m$.
Then
${\vcc}$ 
is a non-zero vector for any 
$J \in [p]^{m}$. As a consequence, $\mathcal V_J$ is a $1$-dimensional
vector space generated by ${\vcc}$.
\end{proposition}
\begin{definition}[The vcc basis]
\label{VCC basis of V}
We fix a reflection vector ${\bf v}$ of $M_0$ in $\mathcal V$.
The set $\{\vcc\}_{J\in [p]^m}$ of $\mathcal V$ is called the 
vcc (vanishing cycle component) basis
associated with ${\bf v}$.
\end{definition}
Let $\mathcal H'_j$ be the intersection matrix on the $p^{m-1}$-dimensional
vector space $\mathcal V_j'$ 
arising from the twisted homology theory 
for $\cF_{C}^{p,m-1}(a(j),B')$, and $\mathcal H_j$ be given in
\eqref{def of H_j}.
In the following proposition, we prove that
$\mathcal H'_j$ coincides 
with
$\mathcal H_j$ up to constant via the isomorphism 
$\varrho_*$ in \eqref{isomorphism with monodromy action}.
\begin{proposition}
\label{inductive intersection form}
\begin{enumerate}
\item
The matrix $M'_{0,j}$  
is an $\mathcal H'_j$-reflection, and $N_{0,j}$ is
an $\mathcal H_j$-reflection.
\item
Let ${\bf v}$ be a reflection vector of $M_0$ and set 
$$
{\bf v}_j=P_j({\bf v})\in K^{[p]^{m-1}}.
$$
Then ${\bf v}_j$ is a reflection vector
of $N_{0,j}$. As a consequence, ${\bf v}_j$ is the
reflection vector for $M'_{0,j}$.
\item
Under the isomorphism $\eqref{isomorphism with monodromy action}$,
the matrices $\mathcal H_j$ and
$\mathcal H_j'$ are
equal up to a non-zero constant multiple.

\end{enumerate}
\end{proposition}
\begin{proof}
(1) 
As for the action $M'_{0,j}$, it is a consequence of 
Lemma \ref{lem:1-eigen-space}.
Since $\mathcal H$ is invertible, so is the $\mathcal V_j$ component $\mathcal H_j$.
The action $N_{0}$ is the image of the action of the fundamental group
on $\mathcal V$, it satisfies the condition (2) of Definition 
\ref{def:H reflection}.
Therefore the restriction $N_{0,j}$ of $N_0$ to 
$\mathcal V_j$ also satisfies the same condition for the matrix
$\mathcal H_j$.

The image of the action $M_{0,j}'-I_{p^{m-1}}$ on $\mathcal V'_j$ is one dimensional
by Lemma \ref{lem:1-eigen-space}.
Therefore by Proposition \ref{comparizon of gamma factor} (2),
the dimension of the image of $N_{0,j}-I_{p^{m-1}}$ is also $1$,
and we have the condition (1) of Definition \ref{def:H reflection}.

\smallskip
\noindent
(2)
To prove that ${\bf v}_j=P_j({\bf v})$ is a reflection 
vector of the $\mathcal H_{j}$-reflection $N_{0,j}$,
it is enough to
show the inclusion
\begin{equation}
\label{fixed by N0j}
\{w\in K^{[p]^{m-1}}\,
\mid w\mathcal H_j \tr{\bf v}_j^{\vee}=0\}
\subset \{w\in K^{[p]^{m-1}}\mid {w}N_{0,j}={w}\}
\end{equation}
in $K^{[p]^{m-1}}$. 
Let ${w}\in K^{[p]^{m-1}}$ be an element of the left
hand side of \eqref{fixed by N0j}.
Using the orthogonality 
of $\mathcal V_j$ and $\mathcal V_{j'}$ for $j\neq j'$
(Proposition \ref{orthogonality and diagonal}), 
we have
\begin{equation}
\label{reflection for vj} 
E_j(w)\mathcal H \tr{\bf v}^{\vee}=
E_j(w)\mathcal H \big(\sum_{j'=1}^pE_{j'}(\tr{\bf v}_{j'}^{\vee})\big)=
w\mathcal H_j \tr{\bf v}_j^{\vee}=0.
\end{equation}
By the property of the refection vector $\bf v$ of $M_0$ 
and the equality \eqref{reflection for vj},
we have $E_j(w)(M_0-I_{p^m})=0$ and $E_j(w)M_0=E_j(w)$.
Moreover, since $E_j(w)$
is contained in the $\beta_{j,m}^{-1}$ eigenspace of $M_m$, 
we have
$$
E_j(w)M_m^i M_0 M_m^{-i}=
\beta_{j,m}^{-i}E_j(w)M_0 M_m^{-i}=
\beta_{j,m}^{-i}E_j(w)M_m^{-i}=E_j(w),
$$
and
$$
E_j(wN_{0,j})=
E_j(w)M_0( M_m M_0 M_m^{-1})\cdots ( M_m^{p-1} M_0 M_m^{1-p})
=E_j(w),
$$ 
using the equality
\eqref{eq:reduction}.
Thus the inclusion \eqref{fixed by N0j} follows.

\smallskip
\noindent
(3) 
By (2), ${\bf v}_j$ is the image of 
$N_{0,j}-I_{p^{m-1}}=M_0'-I_{p^{m-1}}$. We choose ${\bf v}_j^*$ 
so that ${\bf v}^*_j(N_{0,j}-I_{p^{m-1}})={\bf v}_j$.
Since
${u}N_{0,j}={u}M_{0,j}'$ for $u\in K^{[p]^{m-1}}$and the formula of 
an $H$-reflection, we have
$$
{u}+
\dfrac{{u}\mathcal H_j\tr {\bf v}_j^{\vee}}
{{\bf v}_j^*\mathcal H_j\tr {\bf v}_j^{\vee}}{\bf v}_j =
{u}+
\dfrac{{u}\mathcal H_j'\tr {\bf v}_j^{\vee}}
{{\bf v}_j^*\mathcal H'_j\tr {\bf v}_j^{\vee}}{\bf v}_j,
$$
and
$$
\mathcal H_j\tr {\bf v}_j^{\vee}
=\dfrac{{\bf v}^*_j\mathcal H_j\tr {\bf v}_j^{\vee}}
{{\bf v}^*_j\mathcal H'_j\tr {\bf v}_j^{\vee}}\mathcal H_j'
\tr {\bf v}_j^{\vee}.
$$ 
Since $\mathcal H_j$ and $\mathcal H_j'$ are diagonal matrices
and the $J'(j)$-th components of ${\bf v}_j$ are nonzero,
$\mathcal H_j$ is a non-zero constant 
multiple of $\mathcal H_j'$.
\end{proof}

\section{Explicit forms of circuit matrices}
\subsection{Main Theorem}
In this section, we give explicit computations of $M_0$.
We use the identification
$\mathcal V\simeq K^{p^m}$ in \eqref{hypergeom identification}. 
Let $\mathcal H$ be the intersection matrix 
defined in \eqref{intersection matrix total}. 
\begin{definition}
\label{def:h_J in cH}
For $J\in [p]^m$, we define $h_J$ by
\begin{equation}
\label{eq:Hpm}
h_J
=h_J(a,B)
=
\dfrac{
\prod_{i=1}^p (\a_i-{\textstyle{\prod\limits_{k=1}^m \b_{j_k,k}}})
}
{
\prod_{k=1}^m \big(-\b_{j_k,k}{\textstyle{\prod\limits_{1\le i\le p}^{i\ne j_k}
(\b_{i,k}-\b_{j_k,k})}}\big)
},
\end{equation}
where 
$\a_i=\ex(a_i)$, 
$\b_{i,k}=\ex(b_{i,k})$, 
and $\b_{p,k}=1$.
We define a diagonal matrix $H=\diag(h_J)_{J \in [p]^m}$.
\end{definition}
\begin{remark}
In the definition of $H$-reflection, $H$ should satisfies the 
``skew symmetric condition'' 
$H=\tr H^\vee$. There exists a constant $c$ such that
$c$ times the diagonal matrix in Definition \ref{def:h_J in cH}
satisfies this ``skew symmetric condition''.
\end{remark}

\begin{proposition}
\label{reflection diagonal matrix} 
Let ${\bf v}$ and ${\vcc}$ be the elements in Definition 
\ref{VCC basis of V}.
\begin{enumerate}
 \item 
The matrix $({\vcc}\mathcal H\tr{\vccp}^{\vee})_{J,J'}$
is a constant multiple of $H$.

\item
We have
\begin{equation}
\label{length of van cycle}
\sum_{J\in [p]^m} h_J(a,B)=(-1)^{p(m-1)}-\frac{(-1)^{m-1}\prod\limits_{i=1}^p\a_i}
{\prod\limits_{k=1}^{m}(\b_{1,k}\cdots\b_{p-1,k})}.
\end{equation}
In particular, ${\bf v}\mathcal H\tr{\bf v}^{\vee}\neq 0$
as an element of $K$ and $\lambda\neq 1$.
\item
The reflection eigenvalue $\l$ of $M_0$ for $\cF_{C}^{p,m}(a,B)$
is given by 
$$
\l=(-1)^{(p-1)(m-1)}
\prod_{k=1}^{m}(\b_{1,k}\cdots\b_{p-1,k})\Big/\Big(\prod_{i=1}^p\a_i\Big).
$$
\end{enumerate}
\end{proposition}
We will prove Proposition \ref{reflection diagonal matrix} 
in \S \ref{main theorem case m=1} and
\S \ref{proof of main theorem big m}. 
Assuming this proposition,
we state our main theorem 
 and prove it.
\begin{theorem}[Main Theorem]
\label{theorem:main theorem}
Let $\{\vcc\}$ be the vcc basis of $K^{[p]^m}$ defined in Definition
\ref{VCC basis of V}.
We define elements $v_J$ in $K$ so that
${\vcc}=v_J{\bf e}_J$ for $J\in [p]^m$,
and set $V=\diag(v_J)_J$.
Then we have the 
circuit matrices
\begin{align}
\label{M0 formula for vcc base}
&M_0
=V^{-1}\bigg(I_{p^m}-(1-\l)
\dfrac
{H}
{\sum_J h_J}
\one_{p^m}
\tr\one_{p^m}\bigg)V,
\\
\label{Mk formula for vcc base}
&M_k=\diag(\beta^{-1}_{j_k,k})_{J=(j_1,\dots, j_m)}
\quad (k=1, \dots, m),
\end{align}
where $\one_{p^m}=\tr(1,\dots, 1)\in K^{[p]^m}$.
 \end{theorem}
\begin{proof}
Change the basis $\{{\bf e}_J\}$ consisting of eigenvectors
of $M_k$ with $\{\vcc\}$. 
Then we have the equality \eqref{Mk formula for vcc base}
from Proposition \ref{orthogonality and diagonal 1}.

We show the equality \eqref{M0 formula for vcc base}.
Since ${\vcc}\mathcal H\tr{\vccp}^{\vee}=0$
for $J\neq J'$, the equalities
$$
{\bf v}\mathcal H\tr{\bf v}^{\vee}=
\sum_J{\vcc}\mathcal H\tr{\vcc}^{\vee},\quad
{\vcc}\mathcal H\tr{\bf v}^{\vee}=
{\vcc}\mathcal H\tr{\vcc}^{\vee}
$$
hold.
By the equality 
\eqref{length of van cycle},
we have $\sum_J h_J(a,B)\neq 0$, which implies
${\bf v}\mathcal H\tr{\bf v}^\vee=\sum_J{\vcc}
\mathcal H\tr{\vcc}^\vee\neq 0$ 
by Proposition \ref{reflection diagonal matrix} (1).
Thus $\lambda$ is different from $1$ by 
Proposition \ref{prop: H-reflection} (4).
By using 
Proposition \ref{reflection diagonal matrix} (1) again,
we have
\begin{align*} 
{\vcc}M_0
=&{\vcc}-(1-\l)
\dfrac
{{\vcc}\mathcal H\tr{\bf v}^{\vee}}
{{\bf v}\mathcal H\tr{\bf v}^{\vee}}
{\bf v}
\\
=&{\vcc}-(1-\l)
\dfrac
{{\vcc}\mathcal H\tr{\vcc}^{\vee}}
{\sum_J{\vcc}\mathcal H\tr{\vcc}^{\vee}}
(\sum_J{\vcc})
\\
=&{\vcc}-(1-\l)
\dfrac
{h_J}
{\sum_J h_J}
\tr\one_{p^m}V
\end{align*}
for $J \in p^{[m]}$.
The formula \eqref{M0 formula for vcc base} follows 
from the above equality and the equation ${h_J}={\bf e}_JH \one_{p^m}$. 
\end{proof}

\subsection{Case $m=1$}
\label{main theorem case m=1}
We prove Proposition \ref{reflection diagonal matrix}  
by induction on $m$.
For the case $m=1$, $\cF_C^{p,1}(a,B)$
is nothing but the generalized hypergeometric equation.
There are several approaches to study 
the monodromy representation of $\cF_C^{p,1}(a,B)$, refer to 
\cite{BH}, \cite{Ma2} and the references therein.   
Here we remark that $\pi_1(X,\dot x)$ is generated by $\rho_1$ and $\rho_0$
in case $m=1$, and that $\rho_1$ is a loop in $\C-\{0,1\}$ turning once 
positively around $x=0$, $\rho_0$ is that around $x=1$. 
We base our study on the following fact. 
\begin{fact}[{\cite[Theorem 3.7]{Ma2}}]
\label{fact:monod:m=1}
Suppose the non-integrality conditions \eqref{eq:non-integral}, 
\eqref{eq:non-integral-addition}
for $m=1$ and $\ds{\sum_{i=1}^p(a_i-b_i)\notin \Z}$. 
Here we regard $b_{1,1}$,$\dots$, $b_{p-1,1}$,$b_{p,1}$ as 
$b_1$,$\dots$,$b_{p-1},1.$ 
Then the circuit matrix $M_0$ is an $\mathcal H$-reflection
with respect to the intersection pairing.
By choosing a suitable reflection vector ${\bf v}$ of $M_0$,
we have
$E_i({\bf v}_i) \mathcal H \tr E_j({\bf v}_j)^{\vee}=0$ for $i\neq j$,
and $E_j({\bf v}_j) \mathcal H \tr E_j({\bf v}_j)^{\vee}$ is 
a constant multiple of 
\begin{align*}
h_j 
=h_j(a,b)=\frac{\prod\limits_{i=1}^p(\a_i-\b_{j})}
{-\b_{j}\prod\limits_{1\le i\le p}^{i\ne j} (\b_{i}-\b_{j})}
\end{align*}
for $1\le j\le p$, 
where ${\bf v}=\sum_{i=1}^pE_i({\bf v}_i)$ is the eigen decomposition
with respect to the action of $r(M_1)$
given in Proposition \ref{inductive intersection form} (2).
The circuit matrices of $\rho_1$ and $\rho_0$ are
$$
M_1=
\diag(\b_1^{-1},\dots,\b_{p-1}^{-1},1),
\quad 
M_0
=V^{-1}\bigg(I_p-\frac{(1-\l)H}{\sum_{i=1}^p h_i}\one_p\tr\one_p\bigg) V,
$$
where ${\vcc}=v_J{\bf e}_J\in K^{[p]}$
and
\begin{align*}
V=\diag(v_J)_J
,\quad
\l=\big(\prod\limits_{j=1}^{p-1}\b_j\big)
\big/\Big(\prod\limits_{i=1}^p \a_i\big).
\end{align*}
\end{fact}

\subsection{Computation of the intersection matrix
}
\label{proof of main theorem big m}

\begin{proof}[Proof of Proposition \ref{reflection diagonal matrix} (1)]
We use induction on $m$. 
By Proposition \ref{inductive intersection form} (3), for each fixed $j$, two vectors
$(\vccpj\mathcal H\tr \vccpj^{\vee})_{J'}$ and
$(\vccp\mathcal H'_{j}\tr \vccp^{\vee})_{J'}$ are equal
up to constant multiple. 
By induction on $m$, we assume that
$(\vccp \mathcal H'_{j}\tr \vccp^{\vee})_{J'}$ is a constant multiple
of $(h_{J'}(a(j),B'))_{J'}$.

We set $\a(j)_i=\a_i/\b_{j,m}$.
For $J'=(j_1, \dots, j_{m-1})\in [p]^{m-1}$, by the definition of $h_J$ in \eqref{eq:Hpm},
we have
\begin{align*}
&h_{J'(j)}(a,B)
=
\dfrac{
\prod_{i=1}^p 
\b_{j,m}(\dfrac{\a_i}{\b_{j,m}}-{\textstyle{\prod\limits_{k=1}^{m-1} \b_{j_k,k}}})
}
{
\prod_{k=1}^m \big(-\b_{j_k,k}{\textstyle{\prod\limits_{1\le i\le p}^{i\ne j_k}
(\b_{i,k}-\b_{j_k,k})}}\big)
}
\\
=&
\frac{-\b_{j,m}^{p-1}}
{{\textstyle{\prod\limits_{1\le i\le p}^{i\ne j}
(\b_{i,m}-\b_{j,m})}}
}
\cdot
\dfrac{
\prod_{i=1}^p 
(\a(j)_i-{\textstyle{\prod\limits_{k=1}^{m-1} \b_{j_k,k}}})
}
{
\prod_{k=1}^{m-1} \big(-\b_{j_k,k}{\textstyle{\prod\limits_{1\le i\le p}^{i\ne j_k}
(\b_{i,k}-\b_{j_k,k})}}\big)
}
\\
=&
\frac{-\b_{j,m}^{p-1}}
{{\textstyle{\prod\limits_{1\le i\le p}^{i\ne j}
(\b_{i,m}-\b_{j,m})}}
}\cdot h_{J'}(a(j),B').
\end{align*}
Since the first factor of the last term does not depend on
$J'$, 
\begin{align}
\label{up to constant I}
&\text{$(\vccpj \mathcal H \tr\vccpj^{\vee})_{J'}$
is a constant multiple of $(h_{J'}(a(j),B))_{J'}$} 
\end{align}
by the assumption of the induction.
We consider the infinitesimal inclusion defined by 
$$
(x_2, \dots, x_m)\mapsto (\e, x_2, \dots, x_m),
$$ 
and the restriction of hypergeometric series to $x_1=0$.
By the same argument considering
the action of $r(M_1)$, for $B''=({\bf b}_2, \dots, {\bf b}_m)$,
we can show that
\begin{align}
\label{up to constant II}
&\text{the matrix $(\vccpp\mathcal H \tr \vccpp^{\vee})_{J''}$ is
a constant multiple of}
\\
\nonumber
&\text{$(h_{J''(p)}(a,B''))_{J''}$, where $J''(p)=(p,j_2, \dots, j_m)$
}
\\
\nonumber
&\text{ for $J''=(j_2, \dots, j_m)$. 
} 
\end{align}
By \eqref{up to constant I}
and \eqref{up to constant II},
$(\vcc\mathcal H \vcc)_{J}$ is
a constant multiple of $(h_{J}(a,B))_{J}$.
\end{proof}
Before proving Proposition \ref{reflection diagonal matrix} (2), 
we prepare several formulas.
\begin{lemma}
\label{Lagrange formula}
For $p\ge 2$ and $-2\le r\le p$ and $\b_1,\dots, \b_p\in \C$, we have 
\begin{equation}
\label{eq:sum-formula}
\begin{array}{ll}
&\displaystyle{\sum_{k=1}^{p} \frac{\b_{k}^{r}}
{\prod\limits_{1\le i\le p}^{i\ne k} (\b_{k}-\b_{i})}}\\
=&\left\{
\begin{array}{ccl}
(-1)^{p-1}(1/\b_1+\cdots+1/\b_{p})/(\b_0\b_1\cdots\b_{p-1})
&  \textrm{if} &r=-2,\\
(-1)^{p-1}/(\b_1\b_1\cdots\b_{p}) & \textrm{if} &r=-1,\\ 
0 & \textrm{if} &0\leq r< p-1,\\ 
1 & \textrm{if} &r=p-1.\\ 
\end{array}
\right.
\end{array}
\end{equation}
\end{lemma}
\begin{proof}
A polynomial $f(x)$ of degree less than or equal to $p-1$ 
is expressed as  
\begin{equation*}
f(x)=\sum_{k=1}^p \frac{\prod\limits_{1\le i\le p}^{i\ne k} 
(x-\b_{i})}
{\prod\limits_{1\le i\le p}^{i\ne k} (\b_{k}-\b_{i})}
f(\b_{k})
\end{equation*}
by the Lagrange interpolation formula.
By putting $f(x)=x^{r}$ for $r=0, \dots, p-2, p-1$ 
and comparing the coefficients of $x^{p-1}$,
we have the statement for $r=0, \dots, p-2, p-1$.

We set 
$$
\varphi(x)=\prod_{i=1}^p(x-\beta_i)=\sum_{i=0}^pu_ix^i,
$$
and define polynomials $h_1(x), h_2(x)$ by
\begin{align*}
h_1(x)&=\dfrac{\varphi(x)-u_0}{x},\quad
h_2(x)=\dfrac{x\varphi(x)-(u_0/u_1)\varphi(x)+(u_0^2/u_1)}{x^2}.
\end{align*}
Then we have 
$$
h_1(\beta_k)=-u_0\beta_k^{-1},\quad h_2(\beta_k)=(u_0^2/u_1)\beta_k^{-2},
$$
and $\deg(h_1)=\deg(h_2)=p-1$.
We apply the Lagrange interpolation formula to $h_1(x)$ and $h_2(x)$,
then they are expressed as
\begin{align*}
h_1(x)=\sum_{k=1}^p \frac{-u_0\prod\limits_{1\le i\le p}^{i\ne k} 
(x-\b_{i})}
{\b_k\prod\limits_{1\le i\le p}^{i\ne k} (\b_{k}-\b_{i})},\quad
h_2(x)=\sum_{k=1}^p \frac{u_0^2\prod\limits_{1\le i\le p}^{i\ne k} 
(x-\b_{i})}
{u_1\b_k^2\prod\limits_{1\le i\le p}^{i\ne k} (\b_{k}-\b_{i})}.
\end{align*}
Therefore, we have
\begin{align*}
-\dfrac{1}{u_0}=\sum_{k=1}^p 
\frac{1
}
{\b_k\prod\limits_{1\le i\le p}^{i\ne k} (\b_{k}-\b_{i})},\quad
\dfrac{u_1}{u_0^2}=\sum_{k=1}^p \frac{1
}
{\b_k^2\prod\limits_{1\le i\le p}^{i\ne k} (\b_{k}-\b_{i})}
\end{align*}
by comparing the coefficients of $x^{p-1}$.
\end{proof}
\begin{proof}[Proof of 
Proposition \ref{reflection diagonal matrix}(2)]
By the assumption of the induction for $m-1$, we have
\begin{align*}
\sum_{J'}h_{J'}(a(j),B')
= 
(-1)^{p(m-2)}-\frac{(-1)^{m-2}\prod\limits_{i=1}^p\a_i}
{\b_{j,m}^p\prod\limits_{k=1}^{m-2}(\b_{1,k}\cdots\b_{p-1,k})}.
\end{align*}
It yields that 
\begin{align*}
&\sum_{J}h_{J}(a, B)
=
\sum_{j=1}^p
\frac{-\b_{j,m}^{p-1}}
{{\textstyle{\prod\limits_{1\le i\le p}^{i\ne j_k}
(\b_{i,m}-\b_{j,m})}}
}
\sum_{J'}h_{J'}(a(j),B') 
\\
=&
\sum_{j=1}^p
\frac{-(-1)^{p(m-2)}\b_{j,m}^{p-1}}
{{\textstyle{\prod\limits_{1\le i\le p}^{i\ne j_k}
(\b_{i,m}-\b_{j,m})}}
}
+\frac{(-1)^{m-2}\prod\limits_{i=1}^p\a_i}
{\prod\limits_{k=1}^{m-1}(\b_{1,k}\cdots\b_{p-1,k})}
\sum_{j=1}^p
\frac{1}
{\b_{j,m}{\textstyle{\prod\limits_{1\le i\le p}^{i\ne j_k}
(\b_{i,m}-\b_{j,m})}}
}
\\
=&
(-1)^{p(m-1)}-\frac{(-1)^{m-1}\prod\limits_{i=1}^p\a_i}
{\prod\limits_{k=1}^{m}(\b_{1,k}\cdots\b_{p-1,k})}.
\end{align*}
As for the last line of the above equalities, we use Lemma \ref{Lagrange formula}.
Therefore the formula \eqref{length of van cycle}
is valid for $m$ under the assumption of the induction.
By Proposition \ref{prop: H-reflection} (4), we have $\lambda\neq 1$.
\end{proof}

\subsection{The reflection eigenvalue of $M_0$}
\begin{definition}[Amplified vanishing cycle space ${\bf U}_p$
]
Let ${\bf v}$ be a reflection vector of $M_0$ and 
$$
{\bf v}=\sum_{j=1}^p E_j({\bf v}_j),\quad {\bf v}_j=P_j({\bf v})
$$
be the decomposition of ${\bf v}$ with respect to the 
eigenspaces $\mathcal V_j$ of $M_m$. 
We define the amplified vanishing cycle space for $M_m$ as the 
$p$-dimensional $K$-vector
subspace ${\bf U}={\bf U}_p$ spanned by $E_1({\bf v}_1),\dots,
E_p({\bf v}_p)$.
\end{definition}

\begin{proposition}
The amplified vanishing cycle space ${\bf U}$ is stable 
under the right actions of
$M_0$ and $M_m$.
\end{proposition}
\begin{proof}
For $j=1,\dots, p$, we have $E_j({\bf v}_j)M_m=\beta_{j,m}^{-1}
E_j({\bf v}_j)\in {\bf U}$ and 
$$
E_j({\bf v}_j)M_0=E_j({\bf v}_j)+\gamma {\bf v}
=E_j({\bf v}_j)+\gamma(E_1({\bf v}_1)+\cdots +E_p({\bf v}_p))
$$ 
for some $\gamma\in K$. Note that $E_j({\bf v}_j)M_0\in {\bf U}$.
\end{proof}
We set $W=M_0M_m$. 
We choose a basis $\{u_i\}_{i\in [p]}$ of ${\bf U}$.
Since ${\bf U}$ is stable under the right
action of $W$, 
there exists a $p\times p$ matrix $(a_{ij})$ such that 
$$
u_iW=\sum_{j\in [p]}a_{ij}u_j.
$$ 
We define the trace $\mathrm{tr}(W|_{\bf U})$
of the right action of matrix $W$ restricted to 
${\bf U}$ by 
$\sum_{i\in[p]}a_{ii}$.
It is easy to see that $\mathrm{tr}(W|_{\bf U})$ is
independent of the choice of the basis $\{u_i\}_{i\in [p]}$ of ${\bf U}$.
\begin{proposition}
\label{trace free for W}
\begin{enumerate}
\item
The right action of $W^p$ 
commutes with those of $M_0$ and $M_m$ on ${\bf U}$. 
Moreover, $W^p$ is a scalar action on ${\bf U}$. 
\item 
The trace $\mathrm{tr}(W|_{\bf U})$ 
is $0$.
\end{enumerate}
\end{proposition}
\begin{proof}
(1) 
Since $\bold v$ is a reflection vector of $M_0$, we have
$$
{\bf v}W^pM_0 = {\bf v}W^p+\gamma {\bf v},\quad
{\bf v}M_0W^p=\lambda {\bf v}W^p
$$
for some $\gamma \in K$. 
By the relation \eqref{eq:pi1-relations}
in Corollary \ref{circular relation}, 
the matrices $M_0$, $M_m$ and $W$ satisfy 
$$
W^pM_0=(M_0M_m)^pM_0=M_0(M_mM_0)^p= M_0(M_0M_m)^p=M_0W^p,
$$
which means 
that $W^p$ and $M_0$ commute. Therefore, we have
$$
{\bf v}W^p+\gamma {\bf v}={\bf v}W^pM_0={\bf v}M_0W^p=\lambda {\bf v}W^p.
$$
By Proposition \ref{reflection diagonal matrix} (2), 
$\lambda$ is different from $1$, and
\begin{equation}
\label{p-th power is constant}
{\bf v}W^p=\lambda' {\bf v},\quad \lambda'=\dfrac{\gamma}{\lambda-1}
\neq 0. 
\end{equation}

Since $N_0$ commutes with $M_m$ by Theorem \ref{th:reduction} (2), 
$W^p$ and $M_m$ commute by restricting the following equality to 
$\mathbf{U}$:
\begin{align*}
N_0M_m^{p}
=&M_0(M_mM_0M_m^{-1})\cdots (M_m^{p-1}M_0M_m^{1-p})M_m^{p}
=(M_0M_m)^p.
\end{align*}
Therefore the action of
$W^p$ preserves the eigen decomposition of $\mathbf{U}_p$.
Thus, for $j\in [p]$, 
there exists $c_j\in K$ such that 
\begin{equation}
\label{eq:eigenvalue-cj} 
E_j({\bf v}_j)W^p=c_jE_j({\bf v}_j).
\end{equation} 
By comparing \eqref{p-th power is constant} with \eqref{eq:eigenvalue-cj}, 
we have  
$c_j=\lambda'$,
and the right action of $W^p$ restricted to ${\bf U}$
is a scalar multiple of $\lambda'$.

\smallskip
\noindent
(2) We claim that the trace $\mathrm{tr}(W\mid_{{\bf U}})$ of $W$ is $0$. 
To show this claim, we see that the $p$ vectors 
${\bf v}$, ${\bf v}(M_0M_m)$, $\dots$, ${\bf v}(M_0M_m)^{p-1}$
span the space $\mathbf{U}$. 
By the expression of $M_0$ in \eqref{H reflection}, there 
exist $c_1,c_2,\dots,c_{p-2}\in \C$ such that 
\begin{align*}
{\bf v}(M_0M_m)&=\l({\bf v}M_m),\\
{\bf v}(M_0M_m)^2&
=(\l{\bf v}M_m)(M_0M_m)=(\l{\bf v}M_m+c_1{\bf v})M_m\\
&=\l({\bf v}M_m^2)+c_1({\bf v}M_m),\\
{\bf v}(M_0M_m)^3&=(\l({\bf v}M_m^2)+c_1({\bf v}M_m))(M_0M_m)\\
&=(\l({\bf v}M_m^2)+c_1({\bf v}M_m)+c_2{\bf v})M_m\\
&=\l({\bf v}M_m^3)+c_1({\bf v}M_m^2)+c_2({\bf v}M_m),\\
&\hspace{2mm} \vdots \\
{\bf v}(M_0M_m)^{p-1}
&=\l(\!{\bf v}M_m^{p-1})\!+\!c_1(\!{\bf v}M_m^{p-2})\!+\!\cdots
\!+\!c_{p-2}(\!{\bf v}M_m).
\end{align*}
Since ${\bf v}$, 
${\bf v}M_m$ $\dots$, ${\bf v}M_m^{p-1}$ are linearly independent
under the condition \eqref{eq:non-integral-addition}, 
${\bf v}$, 
${\bf v}(M_0M_m)$, $\dots$, ${\bf v}(M_0M_m)^{p-1}$ span the 
space $\mathbf{U}$. We compute $\mathrm{tr}(W\mid_{{\bf U}})$
using the basis $\{{\bf v}(M_0M_m)^{i}\}_{i=0,\dots,p-1}$
and \eqref{p-th power is constant}.
We have
$$
{\bf v}(M_0M_m)^{i}(M_0M_m)
=\begin{cases}
{\bf v}(M_0M_m)^{i+1} &\text{ for }0\leq i\leq p-2,
\\
\l'{\bf v} &\text{ for }i = p-1,  
 \end{cases}
$$
where $\lambda'$ is given in the proof of (1).
The diagonal entries with respect to the above basis
are zero, we have $\Tr(W|_{{\bf U}})=0$.
\end{proof}
\begin{proof}[The proof of Proposition \ref{reflection diagonal matrix} (3)]
 We compute the trace of $W$ on ${\bf U}$.
By Proposition \ref
{prop: H-reflection} (3) and 
Proposition \ref{reflection diagonal matrix} (2),
$\lambda$ is different from $1$. 
We set 
$S_i=\sum_{J'\in[p]^{m-1}}h_{J'(i)}$ and
$S=\sum_{i=1}^p S_i$. 
Then we have
$$
S=(-1)^{p(m-1)}-\frac{(-1)^{m-1}\prod\limits_{i=1}^p \a_i}
{\prod\limits_{k=1}^m(\b_{1,k}\cdots \b_{p,k})}
$$
by Proposition \ref{reflection diagonal matrix} (2),
and
$$
\dfrac{S_i}{S}=
\dfrac
{E_i({\bf v}_i)\mathcal H\tr{\bf v}_i}
{{\bf v}\mathcal H\tr{\bf v}}=
\dfrac
{E_i({\bf v}_i)\mathcal H\tr{\bf v}}
{{\bf v}\mathcal H\tr{\bf v}}
$$
by Proposition \ref{reflection diagonal matrix} (1).
Since
\begin{align*}
&E_i({\bf v}_i) M_0M_m
=E_i({\bf v}_i)M_m-(1-\lambda)
\dfrac
{E_i({\bf v}_i)\mathcal H\tr{\bf v}}
{{\bf v}\mathcal H\tr{\bf v}}
(\sum_{j=1}^pE_j({\bf v}_j)M_m)
\\
=&E_i({\bf v}_i)\b_{i,m}^{-1}-(1-\lambda)
\dfrac{S_i}{S}
(\sum_{j=1}^pE_j({\bf v}_j)\b_{j,m}^{-1})
\\
=&E_i({\bf v}_i)
\Big(1
+(\lambda-1)
\dfrac{S_i}{S}\Big)\b_{i,m}^{-1}
+(\lambda-1)\sum_{1\leq j\leq p, j\neq i}
\dfrac{S_i}{S}(E_j({\bf v}_j)\b_{j,m}^{-1}),
\end{align*}
the trace $\Tr(W|_{\bf U})$ is equal to 
\begin{align*}
\Tr(W|_{\bf U})
=\sum_{i=1}^p
\Big(1
+(\lambda-1)
\dfrac{S_i}{S}\Big)\b_{i,m}^{-1}.
\end{align*}
Since $\Tr(W|_{\bf U})=0$ by Proposition \ref{trace free for W} (2),
the above equality implies 
\begin{align*}
\l
=&1-
\big(\frac{1}{\b_{1,m}}+\cdots +\frac{1}{\b_{p,m}}\big)
\cdot 
S\Big/
{\big(\sum\limits_{i=1}^p (S_i/\b_{i,m})\big)}.
\end{align*}
Using Lemma \ref{Lagrange formula}, we have the equality 
\begin{align*}
&\sum\limits_{i=1}^p (S_i/\b_{i,m})=
\frac{(-1)^{m-1}\prod\limits_{i=1}^p\a_i}
{\prod\limits_{k=1}^{m-1}(\b_{1,k}\cdots\b_{p,k})}\cdot \frac{-1}{
{\b_{1,m}\cdots\b_{p-1,m}}}
(\frac{1}{\b_{1,m}}+\cdots+\frac{1}{\b_{p,m}}),
\end{align*}
and as a consequence, 
$\l$ is expressed by 
\begin{align*}
\l
=&
(-1)^{(p-1)(m-1)}\frac{\prod\limits_{k=1}^m(\b_{1,k}\cdots\b_{p-1,k})}
{\prod\limits_{i=1}^p{\a_i}}
\end{align*}
as an element of $K$.  
\end{proof}

\begin{remark}
We have 
\begin{equation}
\label{eq:(1-L)/trH}
\frac{1-\l}{\sum_J h_J}
=-(-1)^{p(m-1)}\lambda
=(-1)^{m}
\big(\prod_{k=1}^{m}(\b_{1,k}\cdots\b_{p-1,k})\big)\big/\big(\prod_{i=1}^p\a_i\big).
\end{equation}
Though we have assumed
that
the eigenvalue $\l$ of $M_0$ is different from $1$ 
in Proposition \ref{reflection diagonal matrix} (2), 
the expression of $M_0$ in this proposition 
is valid even in the case $\l=1$ 
by the continuity of the monodromy representation 
of $\cF_C^{p,m}(a,B)$ with respect to the parameters $a,B$ 
together with \eqref{eq:(1-L)/trH}. 
\end{remark}

 \ifdefined\ifmaster
 \else
\end{document}
\fi

\end{document}